\providecommand{\tabularnewline}{\\}
\numberwithin{equation}{section}
  \theoremstyle{definition}
  \newtheorem{defn}{\protect\definitionname}[section]
  \theoremstyle{plain}
  \newtheorem{thm}{\protect\theoremname}[section]
  \theoremstyle{plain}
  \newtheorem{prop}{\protect\propositionname}[section]
  \theoremstyle{definition}
  \newtheorem{example}{\protect\examplename}[section]
  \providecommand{\definitionname}{Definition}
  \providecommand{\examplename}{Example}
  \providecommand{\propositionname}{Proposition}
\providecommand{\theoremname}{Theorem}
\begin{document}
\pagenumbering{roman}
\title{An Extended Goodwin Model with Horizontal Trade: A Sheaf Theoretic
Approach}

\author{Philip Coyle}
\maketitle
\begin{abstract}
The Goodwin model of endogenous growth looks to study the dynamic
interaction between employment rate and worker's share of national
income in an economy. The model is simplistically and elegantly described
by a set of differential equations that predicts cyclic behavior
between the two variables in an economy. While this model is simplistic,
and most likely does not accurately represent reality, the mathematical
modeling of cyclic behavior is attractive to economists. Cycles are
at the heart of many macroeconomic theories and a mathematical model
allows for future predictions to be made. Thus, over the years, it
has been updated and extended. Ishiyama (2001) takes one such approach
at updating the Goodwin model. He considers two countries engaged
in horizontal trade. Through the lens of sheaf theory, this paper
describes Ishiyama's complex model through various dependency diagrams.
Sheaves allow us to encode all information reflected in the equations
of the model into a dependency diagram, yielding a visual representation
of the variable relationship structure. These dependency diagrams
are powerful, and much of the analysis typically done on equations
can be done on the diagrams themselves. Further, these dependency
diagrams allow for a new way to consider complex models, such as Ishiyama's
model. More specifically, it also allows us to analyze his system
in a way not previously done. New questions regarding local sections
of this sheaf and their possible extensions to global sections are
considered. These questions lend themselves to unique analysis about
the system. They also provide a practical way to check the accuracy
of Ishiyama's model, which has many obvious benefits. It is meaningful
to conduct this approach to a system of equations given its novelty,
its applicability, and its importance for modeling. 
\end{abstract}
\pagebreak{}

\tableofcontents{}

\listoffigures

\listoftables

\pagebreak{}

\section{Introduction}
\pagenumbering{arabic}
To model complex phenomena, mathematicians, physicists, engineers,
chemists, and economists have long turned toward differential equations
to provide insight and analysis. Because differential equations describe
phenomena in a functional way, they can be used to make predictions
about how systems evolve over time. Before the rise of computers
and the ease to collect and analyze data, models built using differential
equations were the first type of predictive machine; and were greatly
successful, too. Such a simple concept was used to great effectiveness
by Newton to model the trajectories of heavenly bodies. Many of his
equations are still widely used today.

Economics, too, have come to understand the power of differential
equations to model and predict phenomena. Economists, however, are
not the only people who seek to understand and predict how a country's
economy grows over time. Such insight would be useful to politicians
and businessmen alike. Provided the models are accurate, they can
be quite informative. Therefore, there has been a great deal of interest
in the rigor and accuracy with which these model economic phenomena
which we observe.

The behavior of economic systems have been viewed in a few distinct
ways over time. The first models claimed that markets obtain stable
equilibrium over time. Random shocks, such as an oil crisis, are observable,
but eventually the systems return to equilibrium. Later models assumed
more complex growth behavior. These models assumed that growth paths
are not steady, but rather cyclic, and equilibria are effected by
past motions. This appears to be more in line with reality, too.

In 1967, Goodwin proposed a simple model which describes the dynamic
relationship between the employment rate and worker's share of national
income. While Goodwin admitted that it was a ``quite unrealistic
model of cycles in growth rates,'' the beauty of his model lies in
its simplicity, while still yielding interesting cyclic dynamics.
Over time, economists have attempted to update Goodwin's model with
two goals in mind. The first is to make it more applicable to reality and
herein lies the second point of interest: how do the dynamics of the
updated system change? This paper will ultimately focus on one of
these updated models. Ishiyama \cite{Dummy1} proposed a model which extends
the original Goodwin beyond its  closed country assumption
to an open one by building horizontal trade into the model. It is,
at its core, a multi-model system. His new model demonstrates the
emergence of chaotic behavior by allowing for horizontal trade. But
how accurate is Ishiyama's model? What else can be said about Ishiyama's
extended model? A branch of mathematics called sheaf theory may provide
some answers. 

The recent application of sheaf theory to multi-model systems looks
to ease the burden of constructing and analyzing complex models.
Sheaves provide a ``tool box'' for constructing predictive model's
described by a system of equations. Sheaves also provide a way to
check the models accuracy. The power of sheaves comes about since
smaller and easier to construct models can be systematically stitched
together to form a larger, more complex one. Sheaves are represented
diagrammatically as dependency diagrams, where arrows relating variables
to each other represent actual functions. Any analysis done to the
sheaf is equivalent to analysis done on the system of equations. Ishiyama
extends the Goodwin Model to horizontal trade, whereby he takes two
separate countries and effectively ``stitches'' them together via
a trading scheme. At its core, Ishiyama's model is that of a multi-model
system and sheaves have the power to shed light on his model. This
paper will use Ishiyama's proposed model for international horizontal
trade and construct a number of dependency diagrams and sheaves. 
Each will be analyzed using the applicable tools. This analytical approach 
to a system of equations is meaningful given its novelty, its applicability, 
and its importance for modeling. 

This paper is organized in the following way: Section 2 discusses
nonlinear dynamical systems and introduces important concepts relating
to the Goodwin model. Section 3 introduces the predator-prey model.
Section 4 discusses the Goodwin model and various updates to it. Section
5 provides background information on sheaf theory and how to construct
and analyze sheaves. These four sections provide a comprehensive and
sufficient amount of background information to make the final section
understandable, in which the extended Goodwin model is introduced,
sheafified, and analyzed. The final section concludes and discusses
areas of further research.

\section{Nonlinear Dynamical Systems}

\subsection{Preliminary Concepts}

We begin our background discussion with nonlinear dynamical systems.
It is not a stretch to assume that the world is not linear in nature;
thus to model phenomena that take place in it, it is natural to craft
nonlinear models. While this process allows us to model phenomena
in a better way, there are some tradeoffs; most notably, most nonlinear
dynamical systems cannot be solved in closed form. What is available,
however, is an analysis of the qualitative behavior of the system.
It is possible to determine under certain conditions whether a dynamical
system exhibits a closed orbit or displays other dynamic behavior.
The rest of this section is devoted to providing some background on
the concept of closed orbits in dynamical systems. We will introduce
key definitions, concepts, and theorems.

Since we are dealing with dynamical systems of differential equations
rather than difference equations, we will be exclusively dealing with
continuous time. We begin by considering the \emph{n}-dimensional
ordinary differential equations system defining the motion of the
state variables $x_{i},i=1,\dots,n$ 
\[
\dot{x}_{1}=f_{1}(x_{1},\dots,x_{n}),
\]
\begin{equation}
\vdots\label{eq:2.1.1}
\end{equation}
\[
\dot{x}_{n}=f_{n}(x_{1},\dots,x_{n}).
\]
Alternatively, this can be written more succinctly in vector notation
as 
\begin{equation}
\dot{\boldsymbol{x}}=\boldsymbol{f(x)}\mbox{\mbox{ where \ensuremath{\boldsymbol{x}\in\mathfrak{\mathfrak{\mathcal{W}}}\subseteq\mathbb{R}^{n}}}}.\label{eq:2.1.2}
\end{equation}
Here, $\mathcal{W}$ is an open subset of $\mathbb{R}^{n}$ and the
dot over a variable denotes the derivative with respect to time. 
\begin{defn}
Consider a subset $\mathcal{S}$ of the domain $D$, under the mapping
$T:D\rightarrow D$. An \emph{invariant set} is the subset $\mathcal{S}$
such that for all $x\in\mathcal{S}$, $T(x)\in\mathcal{S}$.
\end{defn}
A \emph{solution curve}, \emph{trajectory},or \emph{orbit}
is defined as $\phi_{t}(\boldsymbol{x}(0))$. For a certain set of initial conditions $\boldsymbol{x}(0)$
given, $\phi_{t}(\boldsymbol{x}(0))$ describes the values of $\boldsymbol{x}$
at \emph{$t$}. The \emph{flow }of the system, $\phi_{t}(\boldsymbol{x}):\mathbb{R}^{n}\rightarrow\mathbb{R}^{n}$,
describes the future development of all $\boldsymbol{x}(0)\in\mathcal{W}$. 

An important concept in nonlinear dynamical systems is that of an
attractor. An attractor is a set of numerical values that a system
tends to evolve toward. This happens for a wide variety of starting
conditions of the system. More formally:
\begin{defn}
A closed invariant set $\mathcal{A\subset\mathcal{W}}$ is called
an \emph{attracting set }if there is some neighborhood $\mathcal{U}$
of $\mathcal{A}$ such that $\phi_{t}(\boldsymbol{x}(t))\in\mathcal{U}$
for all $t>0$ and $\underset{t\rightarrow\infty}{lim}\phi_{t}(\boldsymbol{x}(t))=\mathcal{A}$
for all $x\in\mathcal{U}$.
\end{defn}
Additionally, a \emph{repelling set} is simply defined by letting
$t\rightarrow-\infty$ in Definition 2.1. Thus, an attracting set
is simply a set to which trajectories starting at an initial value
in a neighborhood of the set will eventually converge. The \emph{basin
of attraction $\mathcal{A}$ }is therefore the set of all initial
points which are attracted to $\mathcal{A}$. 
\begin{defn}
Let $\mathcal{U}$ be a neighborhood of an attracting set $\mathcal{A}$.
The \emph{basin of attraction} $\mathcal{B}(\mathcal{A})$ is the
region of the phase space $\mathcal{U}$ such that any initial condition
in that region will eventually be iterated into the attractor.
\end{defn}
Attracting sets can also be detected by a \emph{trapping region }\cite{Wiggins90}
\begin{defn}
A closed and connected set $\mathcal{D}$ is a \emph{trapping region}
if $\phi_{t}(\mathcal{D})\subset\mathcal{D}$ for all $t\ge0.$
\end{defn}
There are two types of attractors which we will focus on in this section:
fixed-point attractors and closed orbits.\emph{ }

\subsubsection{Fixed Point Attractors}

A \emph{fixed point} of a function is a point that is mapped to itself
by the function. In other words, if we observe the dynamical system
evolving over time, the final state at which the system is in corresponds
to an attracting fixed point of the function describing the system.
Connecting fixed points to attractors, we can define a fixed point
in the following way:
\begin{defn}
An invariant set $\mathcal{A}$ that consists of only a single element
is called a \emph{fixed point}. 
\end{defn}
A fixed point, or equilibrium point, is a type of attractor that has
been featured heavily in most economic research. When discussing nonlinear
systems, local and global stability properties are considered. \\
\emph{}\\
\emph{Local Stability of Fixed Points}\\
\\
Let $\boldsymbol{x^{*}}=(x_{1}^{*},\dots,x_{n}^{*})$ be a fixed point
of (2.1) \emph{i.e.} $\dot{\boldsymbol{x}}=0=\boldsymbol{f}(\boldsymbol{x}^{*})$.
We introduce the following two local stability concepts, which are
relevant in economic fixed point analysis: 
\begin{defn}
A fixed point is \emph{locally stable} (\emph{locally Lyapunov stable})
if for every $\epsilon>0$ there exits a $\delta>0$ such that for
all $t,$ $|\phi_{t}(\boldsymbol{x}(0))-\boldsymbol{x^{*}}|\le\epsilon$
whenever $|\boldsymbol{x}(0)-\boldsymbol{x}|\le\delta$. 
\end{defn}
We also consider asymptotically stable fixed points. These points
are asymptotically stable if they are elements of the attracting set
$\mathcal{A}$.
\begin{defn}
A fixed point is \emph{asymptotically stable }if for every $\epsilon>0$
there exits a $\delta>0$ such that $\underset{t\rightarrow\infty}{lim}|\phi_{t}(\boldsymbol{x}(0))-\boldsymbol{x^{*}}|=0$
whenever $|\boldsymbol{x}(0)-\boldsymbol{x}|\le\delta$.
\end{defn}
\begin{figure}

\begin{centering}
\includegraphics[scale=0.3]{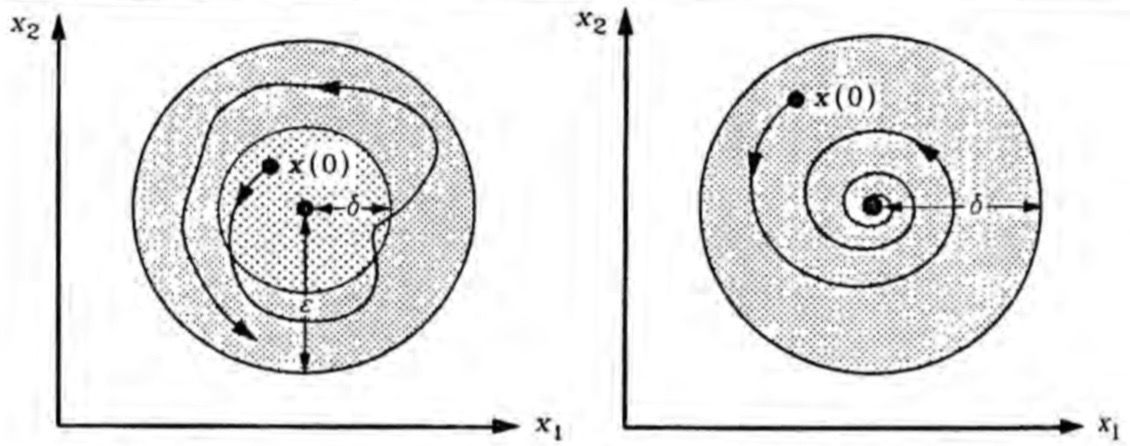}\caption{Lyapunov (Left) and Asymptotic (Right) Stability \cite{Lorenz93}}

\par\end{centering}

\end{figure}

Figure 1 illustrates these two definitions. Note that in order to
be locally stable, a trajectory starting in the neighborhood of $\boldsymbol{x^{*}}$
is \emph{required} to stay within an $\epsilon-$neighborhood. Alternatively,
a trajectory is asymptotically stable if it starts in a $\delta$-neighborhood
of $\boldsymbol{x^{*}}$ and converges toward the fixed point. As
time progresses forward, the distance, measured in Euclidian space,
between the points $\phi_{t}(\boldsymbol{x}(0))$ and $\boldsymbol{x^{*}}$
decreases. \\
\emph{}\\
\emph{Global Stability of Fixed Points}\\
\\
It is important to make the distinction between local and global stability
of fixed points of dynamical systems. Unlike in linear systems, where
local stability points imply global stability points, nonlinear systems
can be characterized by multiple fixed points which are locally asymptotically
stable or unstable. Below, we define global asymptotic stability in
a similar way to local asymptotic stability, however with one modification:
\begin{defn}
A fixed point is \emph{globally asymptotically stable }if it is stable
and $\underset{t\rightarrow\infty}{lim}|\phi_{t}(\boldsymbol{x}(0))-\boldsymbol{x^{*}}|=0$
for every $\boldsymbol{x}(0)$ in the domain of (2.1).
\end{defn}
A useful tool to determine the global stability of a fixed point is
the concept of a \emph{Lyapunov function}: 
\begin{thm}
\textbf{\cite{Lyapunov49}} Let $\boldsymbol{x^{*}}$ be a fixed point
of a differential equation system and let $V:\mathcal{U\rightarrow\mathbb{R}}$
be a differentiable function on some neighborhood $\mathcal{U\subset\mathcal{W\subset\mathbb{\mathbb{R}}}}^{n}$
of $\boldsymbol{x^{*}}$ such that the following hold:\emph{}\\
\emph{(i)} $V(\boldsymbol{x^{*}})=0$ and $V(\boldsymbol{x})>0$ if
$\boldsymbol{x}\neq\boldsymbol{x^{*}}$\emph{}\\
\emph{(ii)} $\dot{V}(\boldsymbol{x})\leq0$ in $\mathcal{U}-\{\boldsymbol{x^{*}}\}$
\\
Then $\boldsymbol{x^{*}}$ is stable. Further if \emph{}\\
\emph{(iii)} $\dot{V}(\boldsymbol{x})<0$ in $\mathcal{U}-\{\boldsymbol{x^{*}}\}$
\\
then $\boldsymbol{x^{*}}$ is asymptotically stable.
\end{thm}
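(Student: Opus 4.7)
The plan is to prove stability and asymptotic stability separately, using in both cases the fact that $V$ is a ``generalized energy'' that cannot increase along trajectories. First I would prove (i)+(ii)$\Rightarrow$ stability. Given $\epsilon>0$, I would shrink $\epsilon$ if needed so that the closed ball $\bar B_\epsilon(\boldsymbol{x^*})$ lies inside $\mathcal{U}$. The key compactness step is to consider the sphere $S_\epsilon=\{\boldsymbol{x}:|\boldsymbol{x}-\boldsymbol{x^*}|=\epsilon\}$: since $\boldsymbol{x^*}\notin S_\epsilon$, condition (i) gives $V>0$ on $S_\epsilon$, so by continuity and compactness $V$ attains a strictly positive minimum $m:=\min_{S_\epsilon}V>0$. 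Then I would use continuity of $V$ at $\boldsymbol{x^*}$, together with $V(\boldsymbol{x^*})=0$, to choose $\delta\in(0,\epsilon)$ with $V(\boldsymbol{x})<m$ whenever $|\boldsymbol{x}-\boldsymbol{x^*}|<\delta$.

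The Lyapunov idea now closes the argument: for any $\boldsymbol{x}(0)$ in the $\delta$-ball, condition (ii) forces $t\mapsto V(\phi_t(\boldsymbol{x}(0)))$ to be nonincreasing as long as the trajectory stays in $\mathcal{U}$, so $V(\phi_t(\boldsymbol{x}(0)))\le V(\boldsymbol{x}(0))<m$. Since crossing $S_\epsilon$ would require $V\ge m$, the trajectory can never touch $S_\epsilon$, hence by continuity in $t$ it stays in $B_\epsilon(\boldsymbol{x^*})\subset\mathcal{U}$ for all $t\ge 0$. This is exactly the Lyapunov stability condition from Definition 2.6.

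For the asymptotic statement I would upgrade this under the stronger condition (iii). By the stability part, for $\boldsymbol{x}(0)$ in a sufficiently small $\delta$-ball the trajectory $\phi_t(\boldsymbol{x}(0))$ stays in the compact set $\bar B_\epsilon(\boldsymbol{x^*})$. The function $t\mapsto V(\phi_t(\boldsymbol{x}(0)))$ is now strictly decreasing and bounded below by $0$, so it has a limit $L\ge 0$. I would argue by contradiction that $L=0$: if $L>0$, then by continuity of $V$ with $V(\boldsymbol{x^*})=0$, the trajectory remains in an annular compact set $K=\{\boldsymbol{x}\in\bar B_\epsilon(\boldsymbol{x^*}):V(\boldsymbol{x})\ge L\}$ on which $\boldsymbol{x^*}\notin K$. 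By (iii) and compactness, $\dot V$ attains a negative maximum $-c<0$ on $K$, whence $V(\phi_t)\le V(\phi_0)-ct\to-\infty$, contradicting $V\ge 0$. Thus $L=0$, and once more using continuity plus condition (i) (which says $V$ vanishes only at $\boldsymbol{x^*}$) one concludes $\phi_t(\boldsymbol{x}(0))\to\boldsymbol{x^*}$, matching Definition 2.7.

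The main obstacle I expect is the compactness/continuity bookkeeping in the asymptotic step, specifically the argument that $L>0$ forces the trajectory into a compact set where $\dot V$ is bounded above by a strictly negative constant. This is the place where the strict inequality (iii) is essential, whereas the weak inequality (ii) was enough for plain stability; getting the quantifiers right, and ensuring that the neighborhood chosen for stability is small enough that the trajectory never escapes $\mathcal{U}$ (so $\dot V$ is actually defined along the whole orbit), is the only subtle piece. Everything else reduces to continuity, the extreme value theorem, and monotone convergence of the real-valued function $V\circ\phi_t$.
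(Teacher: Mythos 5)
Your proposed proof is the standard and correct argument for Lyapunov's direct method, but there is nothing in the paper to compare it against: the paper states this theorem as a cited result (attributed to Lyapunov) and supplies no proof of its own, moving directly on to the remark about the neighborhood $\mathcal{U}$ being arbitrarily large. Taken on its own merits, your argument is sound. The stability half (compactness of the sphere $S_\epsilon$, the positive minimum $m$, the sublevel-set trapping argument) is exactly right, and the asymptotic half correctly isolates the one place where the strict inequality (iii) is needed, namely bounding $\dot V$ away from zero on the compact set $K=\{\boldsymbol{x}\in\bar B_\epsilon(\boldsymbol{x^*}):V(\boldsymbol{x})\ge L\}$. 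Two small points are worth tightening if you write this out in full. First, applying the extreme value theorem to $\dot V$ on $K$ requires $\dot V(\boldsymbol{x})=\nabla V(\boldsymbol{x})\cdot\boldsymbol{f}(\boldsymbol{x})$ to be continuous, which needs $V$ to be $C^1$ (mere differentiability, as literally stated in the hypothesis, does not give a continuous gradient); essentially every source imposes $C^1$ here and you should too. Second, the final step from $V(\phi_t(\boldsymbol{x}(0)))\to 0$ to $\phi_t(\boldsymbol{x}(0))\to\boldsymbol{x^*}$ deserves one more line: if convergence failed, a sequence of times would keep the orbit at distance at least $r>0$ from $\boldsymbol{x^*}$, and compactness of $\bar B_\epsilon(\boldsymbol{x^*})$ would yield a limit point $\boldsymbol{y}\neq\boldsymbol{x^*}$ with $V(\boldsymbol{y})=0$, contradicting (i). With those two details filled in, the proof is complete.
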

It is important to observe that the neighborhood $U\subset\mathcal{W}$
can be arbitrarily large. As a result, a fixed point is globally asymptotically
stable if conditions (\emph{i-iii)} are satisfied on the domain of
(2.2).

\subsubsection{Cyclic Attractors}

The second kind of attractor is a cyclic one. The following discussion
on cyclic attractors concentrates on attractors in the form of closed
orbits. We say a point $\boldsymbol{x}$ is in a \emph{closed orbit}
if there exists a $t\neq0$ such that $\phi_{t}(\boldsymbol{x})=\boldsymbol{x}$.
A \emph{limit cycle} occurs when a closed orbit is an attractor. 
\begin{defn}
\cite{Lorenz93} A closed orbit $\Gamma$ is called a \emph{limit
cycle} if there is a tubular neighborhood $\mathcal{U}(\Gamma)$ such
that for all $\boldsymbol{x}\in\mathcal{U}(\Gamma)$, any flow $\phi_{t}(\boldsymbol{x})$
approaches the closed orbit.
\end{defn}
In most nonlinear economic applications, we wish to comment on the
global behavior of the dynamical system. However, restrictions arise
since it is only possible to completely categorize the global behavior
of a dynamical system in two dimensions. Classifying behavior in higher
dimensions is not as easy. To be able to classify the global behavior
of a dynamical system in two dimensions, we turn to the Poincaré-Bendixson
Theorem. We begin with a definition:
\begin{defn}
A \emph{$\omega$-limit set }of a point $\boldsymbol{x}\in\mathcal{W}$
is the set of all points $l\in\mathcal{W}$ with the property that
there exists a sequence $t_{i}\rightarrow\infty$ such that $\lim_{i\rightarrow\infty}\phi_{t_{i}}(\boldsymbol{x})=l$.
The \emph{$\alpha$-limit set} is defined in the same way, however
the sequence $t_{i}\rightarrow-\infty$. 
\end{defn}
As an example note that in $\mathbb{R}^{2}$, there are three different
types of limit sets that exist:
\begin{enumerate}
\item Fixed Point Attractors
\item Limit Cycles
\item Saddle loops 
\end{enumerate}
The first two types of limit sets have already been discussed. An
example of a saddle loop is described in Figure 2.  The dynamical
system has two unstable fixed points (point A and point C) and a saddle
as a third fixed point (point B). The $\omega$-limit set in this
example is the union of the two loops, or the trajectory that leaves
the saddle and returns to it, \emph{and }the saddle point itself.
Note that saddle loops can enclose closed orbits. 

\begin{figure}
\begin{centering}
\includegraphics[scale=0.35]{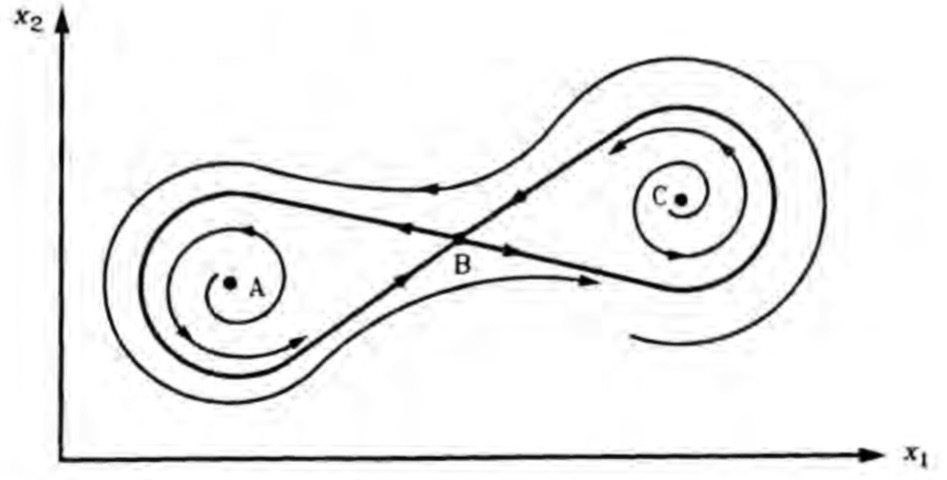}\caption{A Saddle Loop \cite{Lorenz93}}

\par\end{centering}

\end{figure}

The Poincaré-Bendixson Theorem provides sufficient conditions for
the existence of limit cycles in sub-areas of the plane. To begin,
consider the two-dimensional differential equation system:
\begin{equation}
\begin{array}{c}
\dot{x}_{1}=f_{1}(x_{1},x_{2}),\\
\dot{x}_{2}=f_{2}(x_{1},x_{2}),
\end{array}\label{eq:3}
\end{equation}
and let the initial point $\boldsymbol{x}(0)=(x_{1}(0),x_{2}(0))$
be located in an invariant set $\mathcal{D}\subset\mathbb{R}^{2}$.
Fixed point attractors, limit cycles,
and saddle loops are all possible, when the set contains limit sets. The Poincaré-Bendixson Theorem
discriminates between these different types:
\begin{thm}
\textbf{(Poincaré-Bendixson) }A non-empty compact limit set of a $C^{1}$
dynamical system in $\mathbb{R}^{2}$ that contains no fixed points
is a closed orbit.
\end{thm}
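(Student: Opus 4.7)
The plan is to prove, given a non-empty compact limit set $\Omega \subset \mathbb{R}^2$ of a $C^1$ planar system with no fixed points in $\Omega$, that some orbit inside $\Omega$ is closed and that this orbit exhausts $\Omega$. Without loss of generality take $\Omega = \omega(x_0)$ for some $x_0$ (the $\alpha$-limit case is symmetric). I would first record the standard fact that $\omega$-limit sets are invariant under the flow $\phi_t$, so for any $p \in \Omega$ the entire orbit of $p$ lies in $\Omega$. Now choose $p \in \Omega$; by compactness its own $\omega$-limit set $\omega(p)$ is non-empty and contained in $\Omega$, so pick $q \in \omega(p)$. Because $\Omega$ contains no fixed points, the vector field $\boldsymbol{f}$ is non-vanishing at $q$, and by the flow-box theorem there exists a local $C^1$ \emph{transversal section} $\Sigma$ through $q$ along which all nearby trajectories cross in a consistent direction.

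The geometric heart of the argument is a monotone-crossing lemma: if a single trajectory meets $\Sigma$ at successive times $t_1 < t_2 < t_3$ at points $s_1, s_2, s_3$, then these points appear in monotone order along $\Sigma$. I would prove this by forming a simple closed curve $J$ from the orbit arc between $s_1$ and $s_2$ together with the subarc of $\Sigma$ joining $s_1$ to $s_2$, invoking the Jordan Curve Theorem to split $\mathbb{R}^2$ into an inside and an outside, and using uniqueness of solutions plus the consistent orientation of crossings on $\Sigma$ to argue that once the trajectory commits to one side of $J$ after leaving $s_2$ it cannot cross back; hence $s_3$ must lie beyond $s_2$ on $\Sigma$, on the side opposite $s_1$. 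A crucial corollary I would extract is that a limit set meets a transversal in at most one point, since two distinct limit points on $\Sigma$ would force interleaved rather than monotone returns. This planar step is where the hypothesis $\mathbb{R}^2$ is doing all the work, and making it fully rigorous, particularly the orientation bookkeeping on $J$, is the main obstacle I anticipate.

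With these tools in place I would close up an orbit. Since $q \in \omega(p)$, the orbit of $p$ crosses $\Sigma$ infinitely often at points converging monotonically to $q$. Now apply the same analysis at a point $q' \in \omega(q) \subset \Omega$: the orbit of $q$ crosses a transversal $\Sigma'$ at $q'$, but each such crossing is again a point of $\omega(p) \subset \Omega$, and by the at-most-one-point corollary every crossing coincides with $q'$. Thus the orbit of $q$ returns to itself in finite time and is therefore periodic; call this closed orbit $\Gamma \subset \Omega$. To conclude $\Omega = \Gamma$, suppose for contradiction there exists $r \in \Omega \setminus \Gamma$. The orbit of $r$ lies in $\Omega$ and, since $\Omega$ is compact and contains no fixed points, it must accumulate somewhere on $\Gamma$; picking a transversal to $\Gamma$ near such an accumulation point would then produce points of $\Omega$ on this transversal both on $\Gamma$ and arbitrarily close to but off $\Gamma$, violating the at-most-one-point property. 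Hence $\Omega = \Gamma$ is a closed orbit, as claimed.
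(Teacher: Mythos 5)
The paper does not actually prove this theorem: it is quoted as a classical result (in the Hirsch--Smale phrasing) and used as a black box, so there is no in-paper argument to compare yours against. Judged on its own terms, your proposal follows the standard transversal/Jordan-curve proof, and most of it is sound: invariance of limit sets, the choice of $q\in\omega(p)\subseteq\Omega$, the flow-box transversal $\Sigma$, the monotone-crossing lemma via the Jordan Curve Theorem, the corollary that a limit set meets a transversal in at most one point, and the closing-up of the orbit of $q$ into a periodic orbit $\Gamma\subseteq\Omega$ are all correctly identified and correctly ordered.

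The genuine gap is in the last step, where you claim that the orbit of a point $r\in\Omega\setminus\Gamma$ ``must accumulate somewhere on $\Gamma$'' because $\Omega$ is compact and fixed-point free. Compactness only gives that $\omega(r)$ is a non-empty subset of $\Omega$; by your own earlier argument $\omega(r)$ is then \emph{some} closed orbit, but nothing you have said forces it to be $\Gamma$, and a priori $\Omega$ could contain several disjoint closed orbits, in which case the conclusion ``$\Omega$ \emph{is} a closed orbit'' fails. The missing ingredient is the connectedness of the limit set (a non-empty compact $\omega$-limit set of an orbit with compact forward closure is connected), which you neither state nor use. With connectedness in hand the finish is the standard one: $\Gamma$ is closed in $\Omega$, and your transversal/at-most-one-point argument shows $\Gamma$ is also relatively open in $\Omega$ (any point of $\Omega$ sufficiently close to $\Gamma$ would have its orbit cross a transversal to $\Gamma$ at a point of $\Omega$ distinct from the unique permitted intersection), so $\Gamma=\Omega$. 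The transversal idea is exactly the right tool here; it is the appeal to connectedness, not compactness, that makes it bite.
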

While the fixed point has been excluded from the limit set in $\mathcal{D}$,
a closed orbit in $\mathbb{R}^{2}$ will always enclose a fixed point 
\begin{thm}
A closed trajectory of a continuously differentiable dynamical system
in $\mathbb{R}^{2}$ must enclose a fixed point with $\dot{x}_{1}=\dot{x}_{2}=0$. 
\end{thm}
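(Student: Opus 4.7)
The plan is to prove this classical result via a winding number (index) argument, which is the standard tool for extracting information about zeros of a planar vector field from the topology of a surrounding loop. The first preliminary observation is that since solutions to (2.1) are unique through each initial condition, a closed trajectory $\Gamma$ cannot cross itself, so $\Gamma$ is a simple closed curve and by the Jordan curve theorem bounds a simply connected open region $R \subset \mathbb{R}^2$.

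First I would introduce the index of a simple closed curve $C$ with respect to $\boldsymbol{f} = (f_1,f_2)$, defined whenever $\boldsymbol{f}$ does not vanish on $C$, as the number of full counterclockwise rotations of the vector $\boldsymbol{f}(\boldsymbol{x})$ as $\boldsymbol{x}$ traverses $C$ once counterclockwise:
\[
\mathrm{ind}(C) \;=\; \frac{1}{2\pi} \oint_{C} \frac{f_1\, df_2 - f_2\, df_1}{f_1^2 + f_2^2}.
\]
The two key properties I would invoke are standard: (a) $\mathrm{ind}(C)$ is an integer, and (b) it is invariant under any continuous deformation of $C$ through curves along which $\boldsymbol{f}$ is nonvanishing.

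Next I would carry out two computations. Step one: $\mathrm{ind}(\Gamma) = \pm 1$. This uses the fact that along a solution curve, $\boldsymbol{f}$ is tangent to $\Gamma$ at every point, so as one goes once around the closed orbit, the vector $\boldsymbol{f}$ rotates exactly once (the sign depending on orientation). Step two: if $\boldsymbol{f}$ has no zeros in the closed region $\overline{R}$, then $\boldsymbol{f}$ is nonvanishing on the simply connected set $\overline{R}$, so $\Gamma$ can be continuously contracted inside $\overline{R}$ to a single point through curves avoiding the zero set of $\boldsymbol{f}$; by property (b) the index is preserved along this homotopy, and the index of a degenerate point curve is $0$. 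Combining these gives $\pm 1 = 0$, a contradiction, so $\boldsymbol{f}$ must vanish at some $\boldsymbol{x}^* \in R$, giving a fixed point with $\dot x_1 = \dot x_2 = 0$ enclosed by $\Gamma$.

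The main obstacle, and the only nontrivial piece, is justifying the homotopy invariance (property (b)) and the tangential computation in step one with sufficient care — these are smooth in character and rely on $\boldsymbol{f}$ being at least $C^1$, which is precisely the hypothesis of the theorem. Everything else (Jordan curve theorem, simple connectivity of $R$, integrality of $\mathrm{ind}$) is routine once the index is defined, so the real content of the proof is the conflict between the tangency-forced value $\pm 1$ on $\Gamma$ and the contractibility-forced value $0$ in the absence of interior zeros.
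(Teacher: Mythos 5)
The paper states this theorem without proof --- it appears only as a companion remark to the Poincar\'e--Bendixson theorem --- so there is no in-paper argument to compare against. Your index-theoretic proof is the standard route to this classical fact and is correct in outline: uniqueness of solutions for a $C^{1}$ field makes $\Gamma$ a simple closed curve, the Jordan curve theorem gives the bounded region $R$, tangency of $\boldsymbol{f}$ to $\Gamma$ forces $\mathrm{ind}(\Gamma)=\pm 1$, and contractibility of $\Gamma$ inside $\overline{R}$ forces $\mathrm{ind}(\Gamma)=0$ whenever $\boldsymbol{f}$ has no zero in $\overline{R}$, yielding the contradiction. Two points deserve to be made explicit. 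First, the index along $\Gamma$ is only defined when $\boldsymbol{f}$ is nonvanishing on $\Gamma$ itself; this holds because a closed trajectory is a nontrivial periodic orbit (a rest point traversed for all time is excluded), but it should be stated, since it is exactly what lets you restrict attention to zeros in the open region $R$. Second, the assertion in your step one --- that the tangent direction of a simple closed curve makes exactly one full turn, so that the tangent field $\boldsymbol{f}$ has index $\pm 1$ along its own orbit --- is Hopf's Umlaufsatz, a genuine theorem rather than an observation; you correctly identify it as the substantive content, and citing it (or the equivalent statement that a periodic orbit of a planar flow has index $+1$) is what closes the argument. With those two additions the proof is complete, and it is arguably preferable to the alternative derivation via Poincar\'e--Bendixson plus Brouwer, since it uses nothing beyond the homotopy invariance of the winding number.
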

To summarize, the steps outlined are a procedure in applying the Poincaré-Bendixson
Theorem to a dynamical system in $\mathbb{R}^{2}$:
\begin{itemize}
\item Locate a fixed point of the dynamical system and examine its stability
properties.
\item If the fixed point is unstable, proceed to search for an invariant
set $\mathcal{D}$ enclosing the fixed point. When a closed orbit does
not coincide with the boundary of $\mathcal{D}$, the vector field
described by the function $f_{1}$ and $f_{2}$ must point into the
interior of $\mathcal{D}$.
\end{itemize}
The search for the set $\mathcal{D}$ is the difficult aspect of applying
the Poincaré-Bendixson Theorem to dynamical systems. However, it is
easy to exclude the existence of closed orbits in a system like (2.3);
consider $\mathcal{S}$: a simply connected domain in $\mathcal{W}\subseteq\mathbb{R}^{2}$.

The Poincaré-Bendixson Theorem provides sufficient conditions for
the existence of closed orbits in a set $\mathcal{D}$. However the
number of these orbits is not known. It is possible that more than
a single orbit exists, and if several cycles exist it is impossible
that all cycles are limit cycles. Given that a fixed point is unstable,
the innermost cycle in $\mathcal{D}$ is stable. Yet the most serious
disadvantage of the Poincaré-Bendixson Theorem is that it is restricted
to dynamical systems of dimension two only. Analogous theorems for
higher dimensions don't exist, severely limiting the analysis that
can be done.

\subsection{Conservative Dynamical Systems}

If a system has a single limit cycle, then the trajectories starting
at initial points in the basin of attraction are attracted by this
cycle. In addition to these limit cycle systems, another type of dynamical
system exists - conservative dynamical systems. The fundamental property
of a conservative system is the existence of a function for the dependent
variables which is constant in motion. This plays the equivalent role
of ``energy'' in physical systems. This dynamical system is able
to generate oscillations, however it is characterized by different
dynamic behavior. 

Consider a two-dimensional dynamical system:
\begin{equation}
\begin{array}{c}
\dot{x}=f_{1}(x,y)\\
\dot{y}=f_{2}(x,y)
\end{array}\label{eq: 4}
\end{equation}
 The Jacobian Matrix is as follows:
\[
\boldsymbol{J}=\left(\begin{array}{cc}
\frac{\partial f_{1}}{\partial x} & \frac{\partial f_{1}}{\partial y}\\
\frac{\partial f_{2}}{\partial x} & \frac{\partial f_{2}}{\partial y}
\end{array}\right).
\]
Let the determinant for $\boldsymbol{J}$ be positive for all $(x,y)$.
Note that the sign of the trace of the Jacobian - \emph{i.e. }the
sum of the elements in the main diagonal - plays an important role
in determining the kind of oscillating behavior of a two dimensional
dynamical system. We would like to be able to assign a qualitative
description to the meaning of the trace of $\boldsymbol{J}$. There
are three possibilities: the trace is positive, negative, or zero. 

If the trace is positive, the fixed point is considered unstable,
\emph{i.e.} there exists a tendency away from the fixed point in all
directions. We can think of this as a sort of negative friction - every
point in the phase space would spiral outward, and no closed orbit
would exist. Note that by Theorem 2.3, the trace of the Jacobian needs
to change sign if limit cycles are to be generated. Thus a negative
trace represents this idea of positive friction. Formerly exploding
behavior will be dampened for all points that are sufficiently far
away from the equilibrium. Therefore, a closed orbit emerges where
the exploding and imploding forces \emph{both} tend toward zero. Dynamical
systems that exhibit this type of behavior are called \emph{dissipative
}systems. \\

\begin{defn}
\cite{Lorenz93} A system of ordinary differential equations 
$\dot{\boldsymbol{x}}=\boldsymbol{f}(\boldsymbol{x})$ is called \emph{dissipative}
if there are numbers $R>0$ and $t_{1}>0$ such that for all solutions
$\boldsymbol{x^{*}}$of the system if $|\boldsymbol{x}(\boldsymbol{0})|\leq R$
then $|\boldsymbol{x}(\boldsymbol{t})|\leq R$ whenever
$t>t_{1}$. 
\end{defn}
The term stems from the consideration of physical systems where there
exists a permanent input of energy. That energy dissipates throughout
the system. If the energy input is interrupted, the system collapses
to its equilibrium state. 

There is another class of systems that we will consider: a \emph{conservative}
dynamical system. A conservative system is one where no friction exists,
\emph{i.e. }there is no additional input or loss of energy. Keeping
with the theme of classifying the value of the trace, the absence
of friction seen in a conservative dynamical system corresponds to
a zero trace for all points in the phase space. \\

\begin{defn}
Consider the Jacobian matrix for the dynamical system described in
(2.4). Then, a \emph{conservative} dynamical system is one where $tr(\boldsymbol{J})=0$.
\end{defn}
In geometric terms, conservative systems are characterized by the
fact that throughout the evolutionary process, an element in the phase
space changes only its shape, but the volume remains the same. Points
rotate around an elliptic fixed point and volume is conserved. In
dissipative systems, trajectories are attracted to a fixed point and
volume shrinks. Figures 3 and 4 helps further illustrate this point.
Assume that a dynamical system has infinitely many closed orbits and
that every initial point is located in such a closed orbit. If we
consider the area A in the top diagram, initial points contained in
this subset of the plane eventually move to the area B under the action
of the flow. Note that the area of A is identical to the area of B.
We call this dynamical system, thus, \emph{area preserving}. A dynamical
system that is area preserving is also conservative. This is in contrast
to a dissipative system where areas get smaller, \emph{i.e.} the trajectories
converge to an attractor. The bottom diagram shows two trajectories
which start at different initial points and approach an attractor.
The area between the two trajectories is continually getting smaller
and in the limit approaches zero as the trajectories approach the
fixed point. This is formalized in the following definition:
\begin{defn}
Consider a differential equation in $\mathbb{R}^{2}$ that implies
for some function $f(x)$ that $f'(x)=0$, then
\[
f(x)=a,
\]
where $a$ is a constant along the trajectories of the solutions.
The equation $f(x)=a$ is called the \emph{conservation law}.

\begin{figure}
\begin{centering}
\includegraphics[scale=0.3]{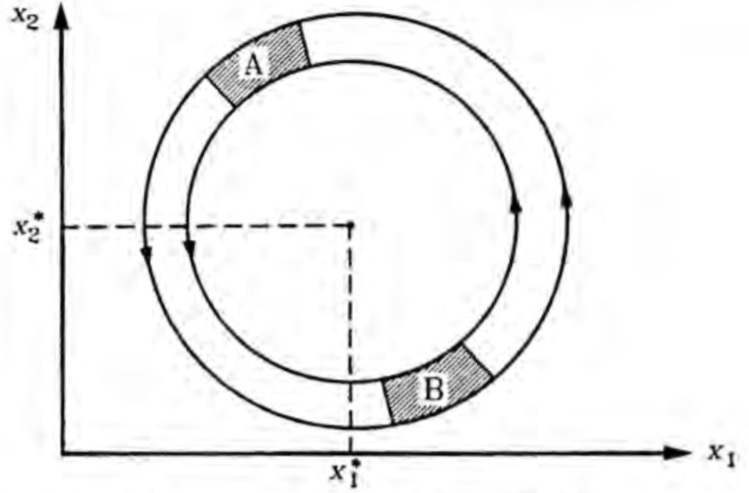}\caption{Area Preservation in a Conservative Dynamical System \cite{Lorenz93}}

\par\end{centering}

\end{figure}

\begin{figure}

\begin{centering}
\includegraphics[scale=0.3]{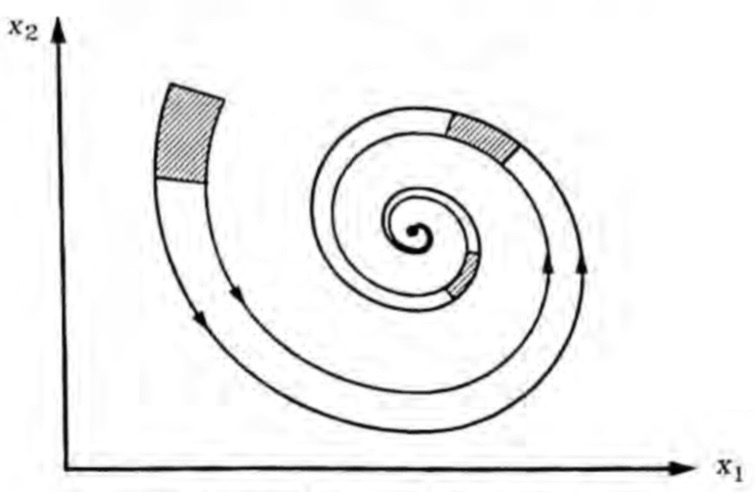}\caption{Area Contraction in a Dissipative System \cite{Lorenz93}}

\par\end{centering}

\end{figure}

\end{defn}
One such example of a conservative dynamical system in economics is
the Goodwin Class struggle model, which is another form of the Lokta-Volterra
Predator-Prey Model.

\section{The Predator-Prey Model}

We begin with a brief introduction to the predator-prey model. For
further reading, see Lotka \cite{Lotka25} and Volterra \cite{Volterra26}.
The predator-prey model is one of the first mathematical models for
biological systems. Originally, it was based on interaction between
predatory and non-predatory fish in the Adriatic sea. First, we derive
it intuitively, and then through a more rigorous mathematical analysis,
consider the system. 

In the 1920's, the observed amount of predatory fish in the Adriatic
was higher than expected. Given the destruction of many of the fisheries
during the first world war, one would anticipate that less prey in
a given environment would also lead to less predators. However, the
observations contradicted this notion. 

Volterra, a mathematical biologist, wanted to model this dynamic based
on his observations. To begin, Volterra assumed that under the absence
of a predator, \emph{i.e. }$y=0$, the growth rate of prey is given
by a constant, $a$. Further, this growth rate would be dependent
on the density of the predator population, $y$, with a linear factor
$b$. This leads to the construction of our first equation:
\[
\frac{\dot{x}}{x}=a-by
\]
where $a,b>0$. Simplifying this, we get 
\[
\dot{x}=(a-by)x.
\]
When considering the growth of the predator, Volterra assumed if there
is no prey available, \emph{i.e.} $x=0$, then the predator population
dies, which is given by a constant decay rate $-c$. Just as the prey
population depends on the density of the predator, the predator depends
on the population of the prey, leading to the following equation:
\[
\frac{\dot{y}}{y}=-c+dx
\]
where $c,d>0.$ This simplifies to
\[
\dot{y}=(-c+dx)y.
\]
 Thus we are left with a system of equations, which forms the Lotka-Volterra
predator-prey model:
\begin{equation}
\begin{array}{c}
\dot{x}=ax-bxy\\
\dot{y}=-cy+dxy
\end{array}\label{eq:5}
\end{equation}
$x$ represent the total prey population and $y$ represents the total
predator population. These equations describe the dynamical change
in both populations at a given point in time. The dependency between
the two variables is illustrated in Figure 5. This is an example of
a variable dependency graph, which will be formally defined in Definition
5.8. Notice that there are two main reinforcing loops: the stock of
prey is dependent on the net growth of prey and the stock of predators
changes with prey's net growth. This balancing loop creates oscillations.

\begin{figure}

\begin{centering}
\includegraphics[scale=0.7]{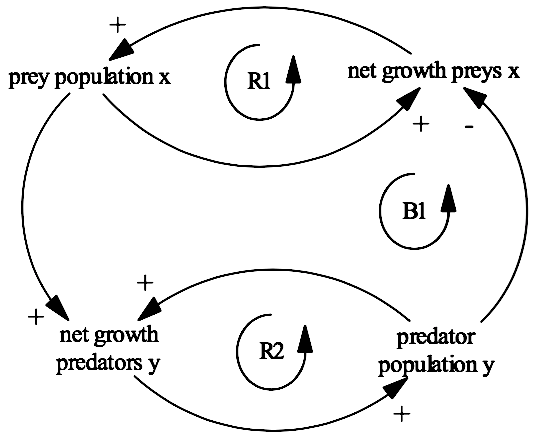}\caption{Causal Loop Diagram of Predator-Prey Model \cite{Weber05}}

\par\end{centering}

\end{figure}

A more formal mathematical consideration of the system and its dynamically
relevant attributes is now in order. Consider again the system outlined
in (3.1). It is easy to see that system (3.1) has two fixed points
when $\dot{x}=\dot{y}=0.$ These are the trivial fixed point $(x^{*},y^{*})=(0,0)$ - a saddle point - and $(x^{*},y^{*})=(\frac{c}{d},\frac{a}{b})$, a
stable center. The Jacobian matrix is:

\[
\begin{array}{ccc}
\boldsymbol{J} & = & \left(\begin{array}{cc}
\frac{\partial f_{1}}{\partial x} & \frac{\partial f_{1}}{\partial y}\\
\frac{\partial f_{2}}{\partial x} & \frac{\partial f_{2}}{\partial y}
\end{array}\right)\\
\boldsymbol{} & = & \left(\begin{array}{cc}
a-by & -bx\\
dy & -c+dx
\end{array}\right)
\end{array}
\]
To see that$(x^{*},y^{*})=(0,0)$, we note that the Jacobian simplifies
to:
\[
\boldsymbol{J}=\left(\begin{array}{cc}
a & 0\\
0 & -c
\end{array}\right)
\]
which has the corresponding eigenvalues  $a$ and $-c$. Thus it is
an unstable saddle point. Considering the other equilibrium, note
that when we substitute $(x^{*},y^{*})=(\frac{c}{d},\frac{a}{b})$
into the Jacobian matrix we get:
\begin{equation}
\boldsymbol{J}=\left(\begin{array}{cc}
0 & \frac{-bc}{d}\\
\frac{da}{b} & 0
\end{array}\right).
\end{equation}

Notice that $det(\boldsymbol{J})=ac$ and $tr(\boldsymbol{J})=0$.
Thus the eigenvalues associated with this matrix are purely imaginary,
meaning the fixed point is neutrally stable. As a result, we cannot
draw any conclusions on the dynamic behavior of system (3.1) by inspecting
the Jacobian. 

To study the global dynamic behavior of system (3.1), we introduce
the concept of the \emph{first integral}:
\begin{defn}
A continuously differentiable function $F:\mathbb{R}^{2}\rightarrow\mathbb{R}^{2}$
is said to be a first integral of a system $\dot{\boldsymbol{x}}=\boldsymbol{f}(\boldsymbol{x})$
where $\boldsymbol{x}\in\mathbb{R}^{2}$, if $F$ is constant for
any solution $\boldsymbol{x}(t)$ of the system. 
\end{defn}
It is important to note that if a first integral exists, it is not
unique. If $F(\boldsymbol{x})$ is a first integral, then so is $F(\boldsymbol{x})+C$, for some constant $C$
Further, the constancy of $F(\boldsymbol{x})$ is expressed as $\frac{dF(\boldsymbol{x})}{dt}=0$;
the constant expression $F(\boldsymbol{x})+C$ defines \emph{level
curves} for different values of the constant $C$. If the saddle point
is the only fixed point, the level curves are given by stable and
unstable manifolds \cite{Lorenz93}. However, if the unique fixed
point is a center, the level curves are closed orbits and any initial
point is located in a closed orbit (except the fixed points). Figure
6 illustrates this concept. For different values of $C$, the curves
$L_{1},L_{2},$ and $L_{3}$ represent different level cures and each
level curve has the property that $\frac{dF(\boldsymbol{x})}{dt}=0$. 

\begin{figure}
\begin{centering}
\includegraphics[scale=0.35]{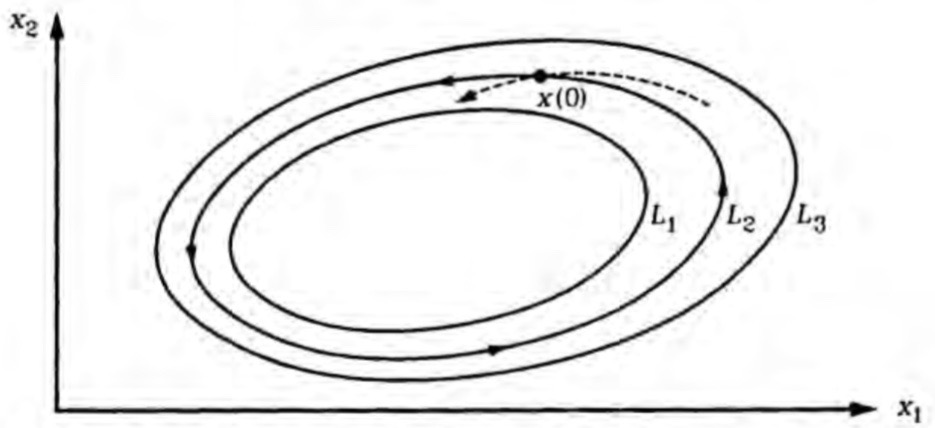}\caption{Level Curves in a System with a First Integral \cite{Lorenz93}}

\par\end{centering}

\end{figure}

\begin{prop}
When a system has a first integral, the dashed line in the diagram
above does not exist.\end{prop}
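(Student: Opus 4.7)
The plan is to argue directly from the defining property of a first integral: $F$ is constant along every solution of the system, so any solution trajectory must be contained in a single level set $\{F = C\}$. The dashed line in Figure 6 depicts a would-be trajectory that passes from one level curve $L_i$ to a different level curve $L_j$ with distinct constants $C_i \neq C_j$, and the goal is to rule this out.

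First I would fix an arbitrary solution $\boldsymbol{x}(t)$ of $\dot{\boldsymbol{x}} = \boldsymbol{f}(\boldsymbol{x})$ and invoke Definition 3.1 to conclude that $F(\boldsymbol{x}(t))$ is constant in $t$. Equivalently, since $F$ is continuously differentiable, the chain rule gives
\[
\frac{d}{dt} F(\boldsymbol{x}(t)) \;=\; \nabla F(\boldsymbol{x}(t)) \cdot \boldsymbol{f}(\boldsymbol{x}(t)) \;=\; 0
\]
for all $t$ in the interval of existence of the solution. Integrating this identity from $0$ to $t$ yields $F(\boldsymbol{x}(t)) = F(\boldsymbol{x}(0))$, so every point on the trajectory lies on the single level curve determined by the initial condition.

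Next I would argue by contradiction that the dashed line cannot represent an admissible trajectory. Suppose it did: then there would exist times $t_1 < t_2$ with $\boldsymbol{x}(t_1) \in L_i$ and $\boldsymbol{x}(t_2) \in L_j$ for $i \neq j$. By the definition of the level curves in the paragraph preceding the proposition, this would force $F(\boldsymbol{x}(t_1)) = C_i \neq C_j = F(\boldsymbol{x}(t_2))$, contradicting the constancy of $F$ along solutions established above. Hence no solution can cross between distinct level curves, and the dashed line cannot exist.

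The only real subtlety is interpretive rather than mathematical: one has to pin down what the dashed line in the figure is meant to depict (namely, a hypothetical solution curve that would traverse more than one level set), since the proposition is essentially a geometric restatement of the definition of a first integral. Once that interpretation is fixed, the argument reduces to the one-line observation $\frac{d}{dt}F(\boldsymbol{x}(t)) = 0$, and no further machinery beyond Definition 3.1 is required.
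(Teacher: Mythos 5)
Your proof is correct and takes essentially the same approach as the paper's: both rest on the single observation that $F$ is constant along any solution, so a trajectory through a point of one level curve must remain on that level curve and therefore cannot be the dashed curve crossing between distinct level sets. The explicit chain-rule computation $\frac{d}{dt}F(\boldsymbol{x}(t))=\nabla F\cdot\boldsymbol{f}=0$ and the contradiction framing are just slightly more detailed renderings of the same argument.
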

\begin{proof}
Consider the point $\boldsymbol{x}(0)$ located in $L_{2}$. Its trajectory
for $t<0$ and $t>0$, which is passing through this point, is given
by $\phi_{t}(\boldsymbol{x}(0))$. Note that the point is a point
in a level curve and can thus be described by
the constant $F(\boldsymbol{x}(0))$. The point $\phi_{t}(\boldsymbol{x}(0))$
when $t>0$ must \emph{also} be located in the level curve, else $F(\boldsymbol{x})$
would not be constant for any solution. Then it follows that $F(\boldsymbol{x}(0))=F(\phi_{t}(\boldsymbol{x}(0)))$
for all $t>0$ and $t<0$, \emph{i.e. }the trajectory indicated above
cannot exist when the system has a first integral.
\end{proof}
All initial points are located on one of the infinitely many level
curves determined by different values of $C$. 
\begin{thm}
The predator-prey system is a first integral.\end{thm}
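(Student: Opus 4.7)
The statement is phrased somewhat loosely: what must be shown is that the Lotka--Volterra system (3.1) \emph{admits} a first integral in the sense of Definition 3.1, i.e.\ there exists a continuously differentiable $F:\mathbb{R}_{>0}^{2}\rightarrow\mathbb{R}$ which is constant along every solution of (3.1). The plan is therefore to exhibit such an $F$ explicitly and then verify that $\dot{F}\equiv 0$ on orbits.

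First I would derive a candidate for $F$ by the standard separation trick. Away from the axes (where $x,y>0$), the chain rule gives
\begin{equation*}
\frac{dy}{dx}=\frac{\dot{y}}{\dot{x}}=\frac{y(-c+dx)}{x(a-by)},
\end{equation*}
which is separable:
\begin{equation*}
\left(\frac{a}{y}-b\right)dy=\left(d-\frac{c}{x}\right)dx.
\end{equation*}
Integrating both sides yields the implicit relation
\begin{equation*}
a\ln y - by - dx + c\ln x = C,
\end{equation*}
so the natural candidate first integral is
\begin{equation*}
F(x,y)\;=\;a\ln y - by + c\ln x - dx,\qquad (x,y)\in\mathbb{R}_{>0}^{2}.
\end{equation*}

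Next I would verify the defining property directly, bypassing the heuristic derivation. Computing the partial derivatives gives $F_{x}=c/x-d$ and $F_{y}=a/y-b$, so along any solution of (3.1)
\begin{equation*}
\dot{F}=F_{x}\dot{x}+F_{y}\dot{y}=\left(\tfrac{c}{x}-d\right)(ax-bxy)+\left(\tfrac{a}{y}-b\right)(-cy+dxy).
\end{equation*}
Expanding the two products and collecting terms produces exact cancellation, so $\dot{F}=0$. Because $F$ is continuously differentiable on the positive quadrant (which is forward and backward invariant under (3.1), since the axes are themselves invariant), $F$ is constant on every trajectory, establishing the claim.

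The work here is essentially a routine calculation, so there is no genuine obstacle; the only subtlety worth flagging is the domain restriction. The logarithms force us to work on $\mathbb{R}_{>0}^{2}$, but this is harmless for the predator--prey interpretation, and in any case the nontrivial equilibrium $(c/d,a/b)$ and the closed orbits of Figure~6 all lie in this quadrant. If one wanted an integral defined on all of $\mathbb{R}^{2}$ one could instead take $\tilde{F}(x,y)=y^{a}x^{c}e^{-by-dx}$, which is smooth everywhere and satisfies $\dot{\tilde{F}}=0$ by the same computation; this has the added benefit of making the level sets $\tilde{F}=\text{const}$ manifestly compact in $\mathbb{R}_{>0}^{2}$, which (combined with Proposition~3.1) gives an independent confirmation that all orbits in the positive quadrant are closed.
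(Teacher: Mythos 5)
Your proof is correct and follows essentially the same route as the paper: derive the candidate integral by eliminating time and separating variables, then verify $\dot{F}=0$ directly along trajectories. The only cosmetic difference is that you carry out the verification on the logarithmic form $a\ln y-by+c\ln x-dx$ while the paper exponentiates first and verifies $y^{-a}e^{by}x^{-c}e^{dx}$; your explicit attention to the domain $\mathbb{R}_{>0}^{2}$ and to the sign conventions is a welcome tightening of the paper's statement but not a different argument.
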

\begin{proof}
To begin, it is helpful to eliminate the time component\emph{,} which
is done by dividing both equations:
\[
\frac{dy}{dx}=-\frac{(c-dx)y}{(a-by)x}
\]
 Note we are left with two integrable equations. After rearrangement,
dividing the equation by $xy$ and integrating, we have:

\begin{equation}
-aln(y)+by-cln(x)+dx=A
\end{equation}
 Where A is a constant. Further (3.3) can be rewritten as: 
\begin{equation}
y^{-a}e^{by}x^{-c}e^{dx}=B
\end{equation}
 if we exponentiate through. Here $B=e^{A}$. If we set (3.4) equal
to $F(x,y)$, we can see that the predator-prey system is a first
integral by differentiating with respect to time.
\[
\frac{d}{dt}F(x,y)=\frac{\partial F(x,y)}{\partial x}\dot{x}+\frac{\partial F(x,y)}{\partial y}\dot{y}.
\]
\\
The partial derivatives are 
\[
\frac{\partial F(x,y)}{\partial x}\dot{x}=F(x,y)(-\frac{c}{x}+d)
\]
\\
and 
\[
\frac{\partial F(x,y)}{\partial y}\dot{y}=F(x,y)(-\frac{a}{y}+b).
\]
 Substituting in, we have
\[
\begin{array}{ccc}
\frac{d}{dt}F(x,y) & = & F(x,y)(-\frac{c}{x}+d)(a-by)x+F(x,y)(-\frac{a}{y}+b)(-c+dx)y\\
 & = & 0
\end{array}
\]
 \\
Thus the predator prey system is a first integral. 
\end{proof}
This result, combined with an earlier theorem, yields the following
theorem. 
\begin{thm}
Every trajectory of the Lotka-Volterra equation is a closed orbit,
except for the fixed point $(x^{*},y^{*})$ and the coordinate axes. \end{thm}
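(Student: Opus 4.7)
The plan is to leverage Theorem 3.1, which establishes that $F(x,y) = y^{-a}e^{by}x^{-c}e^{dx}$, or equivalently $G(x,y) = -a\ln y + by - c\ln x + dx$ on the open first quadrant, is a first integral of the Lotka-Volterra system. By Proposition 3.1, any trajectory starting in $(0,\infty)^{2}$ must remain on the level curve $\{(x,y) : G(x,y) = G(\boldsymbol{x}(0))\}$ for all time. The argument then reduces to understanding the global geometry of these level curves and handling the coordinate axes separately.

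First I would analyze $G$ on $(0,\infty)^{2}$: its gradient vanishes only at $(c/d, a/b)$, and its Hessian there is diagonal with positive entries $c/x^{2}$ and $a/y^{2}$, hence positive definite, so this is a strict local minimum. Next I would verify that $G \to +\infty$ whenever $(x,y)$ approaches either coordinate axis or escapes to infinity in the open orthant, since $-c\ln x$ and $-a\ln y$ dominate near the axes while $dx$ and $by$ dominate at infinity. Hence $G$ is proper on the open orthant with a unique global minimum, so every sublevel set $\{G \leq C\}$ with $C > G(c/d, a/b)$ is compact and each regular level set $\{G = C\}$ is a compact, connected, smooth one-manifold encircling the fixed point, i.e., a simple closed curve.

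Fixing an initial condition $\boldsymbol{x}(0)$ in the open first quadrant other than the fixed point, the corresponding trajectory lies on some such simple closed curve $\Gamma$, and the vector field is nonvanishing on $\Gamma$ because the only interior fixed point sits on a strictly lower level set. Compactness of $\Gamma$ forces the orbit to be bounded and hence defined for all time; the Poincar\'e-Bendixson Theorem (Theorem 2.3) applied to the compact, fixed-point-free $\omega$-limit set inside $\Gamma$ then identifies the trajectory as a closed orbit, which must be all of $\Gamma$. The coordinate axes are invariant and handled directly: on $\{x=0\}$, $\dot y = -cy$ gives exponential decay to the origin, and on $\{y=0\}$, $\dot x = ax$ gives exponential escape, so neither is a closed orbit.

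The main obstacle I anticipate is establishing that each regular level set of $G$ is genuinely a single simple closed curve rather than, say, a union of components or an unbounded set. This depends on the properness and convexity-type behavior of $G$ identified above; while each ingredient is elementary, stitching them together rigorously -- for instance via the implicit function theorem at each nonsingular level combined with compactness and connectedness of the sublevel sets -- takes more care than the rest of the argument and is where a careful reader would want the most detail.
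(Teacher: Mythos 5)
Your proposal is correct, and it is considerably more complete than the argument the paper actually gives. The paper's proof of this theorem is essentially a two-sentence sketch: it observes that closed orbits cannot be limit cycles (since trajectories approaching a limit cycle would not themselves be closed) and then \emph{asserts} that ``each point in the phase space is located in a closed orbit,'' leaning on the earlier informal remark that when the unique fixed point of a system with a first integral is a center, the level curves are closed orbits. In other words, the paper takes the same starting point as you do --- the first integral established in Theorem 3.1 and the confinement of trajectories to level curves from Proposition 3.1 --- but never verifies that those level curves are in fact compact simple closed curves, nor that a trajectory actually traverses one completely rather than, say, limiting onto a proper subset. Your proposal supplies exactly these missing steps: the critical-point and properness analysis of $G(x,y)=-a\ln y+by-c\ln x+dx$, the conclusion that regular level sets are compact connected curves encircling $(c/d,a/b)$, the Poincar\'e--Bendixson step (which is Theorem 2.2 in the paper's numbering, not 2.3), and the explicit treatment of the invariant coordinate axes. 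On the one technical point you flag as delicate --- connectedness of the level sets --- note that $G$ is a sum of the strictly convex single-variable functions $dx-c\ln x$ and $by-a\ln y$, hence strictly convex and proper on the open quadrant; its sublevel sets are therefore compact convex sets and each regular level set is the boundary of one, which settles the simple-closed-curve claim without further work. Once the trajectory is known to lie on a compact curve on which the vector field is bounded away from zero, you can even bypass Poincar\'e--Bendixson: the orbit traverses $\Gamma$ at speed bounded below and must return to its starting point in finite time. What the paper's version buys is brevity; what yours buys is an actual proof.
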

\begin{proof}
It follows that closed orbits cannot be limit cycles, otherwise the
trajectories which approach limit cycles are not closed orbits. Since
each point in the phase space is located in a closed orbit, the initial
values of $(x(0),y(0))$ determine which of the infinitely many closed
orbits describe the dynamical behavior of the system.
\end{proof}
With the help of the first integral, the predator-prey system is classified
as a conservative dynamical system. A sample phase portrait for the
predator-prey model is shown in Figure 7.

\begin{figure}
\begin{centering}
\includegraphics[scale=0.3]{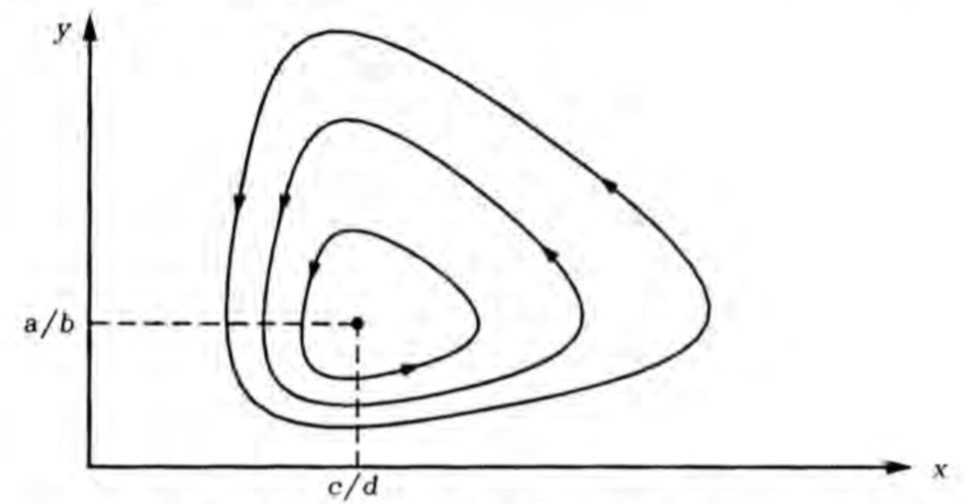}
\par\end{centering}

\caption{Sample Lotka-Volterra Phase Diagram \cite{Lorenz93}}
 
\end{figure}

\section{Goodwin's Class Struggle Model}

\subsection{History of Macroeconomic Growth Models}

The topic of modeling economic growth is one of the chief concerns
of macroeconomics. Over its theoretical development, the behavior
of economic systems organizes itself into a few main categories. The
first and earliest type concluded that markets will always obtain
a stable equilibrium; random shocks are possible and observable, but
equilibrium will eventually be restored. This didn't reflect reality
very well, though. 

It has long been known that growth happens in cycles. The first models
were descriptive in nature. That changed when Lowe \cite{Lowe26}
called for a theoretical system to describe economic cycles. Hayek
\cite{Hayek29,Hayek31} made the first attempt. He developed a theory
of economic cycles based on an interdependent equilibrium system.
This still failed to provide a full enough explanation. Keynes \cite{Keynes36}
was more successful, as he was able to develop a closed, interdependent,
and consistent theoretical structure that was able to determine phenomena
like aggregate output and unemployment. Yet, Keynes still failed to
adequately explain the business cycle, a vital concept to understand
and model in the growth of economies. 

Keynes' theoretical structure needed to be extended into a more dynamic
long-run perspective. The Oxbridge Phase of the Keynesian Revolution
provided the extension to Keynes' General Theory. The Oxbridge Phase
led to three developments in economics: 
\begin{enumerate}
\item The development of multiplier-accelerator theories of cycles.
\item The development of non-linear endogenous mechanisms.
\item The development of Keynesian cycle theory. 
\end{enumerate}
The multiplier principle implies that investment will increase output,
while the accelerator principle suggests that greater output will
increase investment. Thus a feedback cycle is developed. This idea
is attributed to Harrod \cite{Harrod36} who adopted this theory to
create cycles of growth. Harrod's theory was later mathematized by
Hicks \cite{Hicks50}. Kaldor \cite{Kaldor40} and Goodwin \cite{Goodwin51}
worked on the development of endogenous cycles. These different models,
which are non-linear in structure, use mathematics to address the
dynamics of income distribution. The structure and methods found in
Kaldor and Goodwin's work differ greatly in both the structure and
the focus of their models. Finally, the development of Keynesian cycle
theory reintroduced financial variables, moving beyond output growth
and income distribution. The two most well known models within this
realm are Keynes-Wicksell's \cite{Stein66} model on monetary policy
and Minsky's \cite{Minsky77} financial cycle theories. 

The remainder of this paper focuses on Goodwin's most famous model
of endogenous growth cycles as well, as some of the modifications to it.
In 1967 Goodwin \cite{Goodwin67} introduced a simplistic model about
the dynamics of wages and employment. His model is analogous to the
Lotka-Volterra predator-prey model, where wages correspond to predators
and employment corresponds to prey. A brief explanation yields some
intuition to why this model makes sense. 

In this economy there are two groups: workers and employers; each
has some bargaining power. At high levels of employment, the bargaining
power of workers is high. They are able to drive up wages,f thus reducing
profits. As profit levels fall, fewer workers are able to be hired,
yielding lower levels of employment. This low level of employment
means lower wages and therefore higher profits. With higher profits,
more workers are hired and the employment level rises again. A cyclic
pattern emerges.

Goodwin's model originates from an idea proposed by Karl Marx in \emph{Capital}
\cite{Marx85}. Marx argued ``capitalism's alternate ups and downs
can be explained by the dynamic interaction of profits, wages, and
employment.'' The growth of profits fuels production and thereby
labor demand; wages rise and shrink profits, which erode the basis
for accelerated accumulation. Further, Marx argues that the employment
rate and worker's share of income triggers these cycles. Goodwin formalized
this concept with mathematics, which is done through the framework
of the predator-prey model.

\begin{figure}
\centering{}\includegraphics[angle=270,scale=0.115]{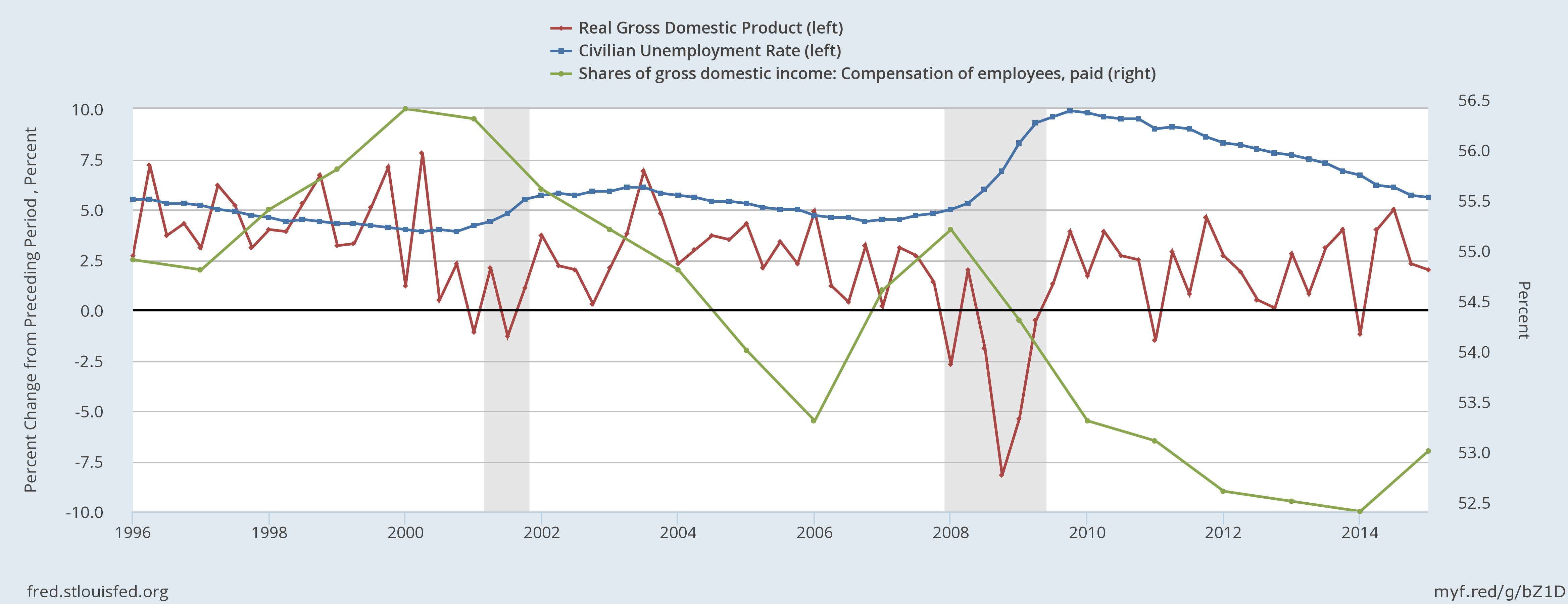}\caption{Real GDP Growth, Unemployment Rate, \& Worker's Share of Domestic
Income (US 1996-2015)}
\end{figure}

These cycles are not to be confused with the business cycle which
are also apparent in economic growth. However, the business cycle
\emph{can} affect wage-employment cycles, and fluctuations in wage-employment
cycles can affect business cycles, leading to recessions. Figure 8
shows output growth, unemployment and workers' share (net wages as
a share of national income) in the United States from 1996 to 2015.
A somewhat cyclic pattern is apparent. High levels of output growth,
a high rate of employment (low rate of unemployment), and increases
in real wages are tied with the economic expansion of the late 1990s
In the early 2000s, output growth slowed substantially, real wages
fell, and unemployment slightly increased. From 2006-2008, there was
strong economic growth, which was again mirrored by low unemployment
and increases in worker's share of national income. With the onset
of the great recession in 2008, output growth again slowed substantially,
unemployment rose drastically, and worker's share of national income
plummeted. It is important to observe that Goodwin's cycle does not
directly deal with business cycles, since changes in employment and
wages are dynamically modeled. However, Goodwin cycles can coexist
with business cycles. 

Goodwin's model found attention among political economists. It's not
entirely clear if it's due to the oscillatory properties it predicts,
the suggested analogy between predator-prey interdependence and worker's
class struggle, or some combination of the two. Nevertheless, Goodwin's
Model serves as a guide to show the power of mathematics in economic
modeling.

\subsection{Deriving the Model}

We begin with the following assumptions. The theoretical
economy being considered is a \emph{closed }economy, simply meaning
that there is no international trade.\\
\\
\textbf{(A1)} Technical Progress grows at a constant rate.\\
\textbf{(A2)} The labor force grows at a constant rate.\\
\textbf{(A3)} There exist only two homogeneous and non-specific factors
of production: capital and labor.\\
\textbf{(A4)} All quantities are real and net.\\
\textbf{(A5)} All wages are consumed; all profits are saved and invested.\\
\textbf{(A6)} There is a constant capital-output ratio.\\
\textbf{(A7)} A real wage that rises in the neighborhood of full employment,
expressed by the Phillips Curve.\\
\\
Further, the following list of abbreviations, definitions, and relations
describes the framework of the economy. 
\begin{itemize}
\item Output: $Y$
\item National Income: $q$
\item Labor: $L$ 
\item Capital: $k$
\item Wage Rate: $w$
\item Labor Productivity: $a$
\item Labor Income: $wL$
\item Labor Income Share: $u$
\item Profit Income Share: $\pi$
\item Savings: $(1-\frac{w}{a})Y$
\item Capital Output Ratio: $\sigma$
\item Labor Supply: $N$
\item Employment Rate: $v$
\end{itemize}
We begin with the tautological equation
\begin{equation}
q=\frac{Y}{L}\cdot\frac{L}{N}\cdot N
\end{equation}
 where $\frac{Y}{L}$ is defined to be labor productivity and $\frac{L}{N}$
is defined to be the employment rate. Thus the equation simplifies
to:
\begin{equation}
q=a\cdot v\cdot N.
\end{equation}
 We wish to take the logarithmic derivative with respect to time.
\footnote{With regard to the construction of our model, taking the logarithmic
derivative allows us to understand how variables in our system change
over time. Recall that if 
\[
f=\frac{g}{h}
\]
then by the quotient rule, the percent change over time is:
\begin{align*}
\frac{\dot{f}}{f}= & \frac{1}{f}\left[\frac{d}{dt}\left(\frac{g}{h}\right)\right]\\
= & \frac{1}{f}\frac{\dot{g}h-\dot{h}g}{h^{2}}\\
= & \frac{h}{g}\left(\frac{\dot{g}}{h}-\frac{g\dot{h}}{h^{2}}\right)\\
= & \frac{\dot{g}}{g}-\frac{\dot{h}}{h}.
\end{align*}}
 Thus, taking the logarithmic derivative of (4.2) yields
\begin{equation}
\frac{\dot{q}}{q}=\frac{\dot{a}}{a}+\frac{\dot{v}}{v}+\frac{\dot{N}}{N}
\end{equation}
and simplifies to
\begin{equation}
\frac{\dot{q}}{q}=\alpha+\frac{\dot{v}}{v}+\beta.
\end{equation}
 Here, $\alpha$ is the increase in productivity and $\beta$ is the
growth rate in labor supply. Recall from our assumptions that both
$\alpha$ and $\beta$ are constants. 

The national income is distributed 100 percent between capitalist's
share of national income (\emph{$\pi$)} and worker's share of national
income ($v$), which can be mathematically expressed by $1=u+\pi$.
Further, by assumption, since all profits are reinvested, it follows
that 
\[
\dot{k}=q\cdot\pi
\]
\[
\frac{\dot{k}}{q}=1-u.
\]
 $\sigma$ represents a constant capital output ratio. Mathematically,
then, $\sigma=\frac{k}{q}$. Diving by $\sigma$ yields:
\begin{align*}
\frac{\dot{k}}{\sigma}= & \frac{(1-u)}{\sigma}\cdot q\\
\frac{q\cdot\dot{k}}{k}= & \frac{(1-u)}{k}q^{2}\\
\frac{\dot{k}}{k}= & \frac{(1-u)}{k}q\\
\frac{\dot{k}}{k}= & \frac{(1-u)}{\sigma}.
\end{align*}
We note some observations which will help us simplify our equation.
Note first that the equation above states that the growth rate of
capital is dependent on worker's share of national income. As a result,
we can replace the growth rate of capital with the growth rate of
national income. Similarly, since we know that $\sigma$ is constant,
the growth rate of national income must be the same as the growth
rate of the capital stock. Thus with these two assertions, a fluctuation
in capital leads directly to a fluctuation in the national income:
\begin{equation}
\frac{\dot{k}}{k}=\frac{\dot{q}}{q}=\frac{1-u}{\sigma}.
\end{equation}
Setting (4.4) equal to (4.5) and solving for $\dot{v}$, we get the
first differential equation describing employment rate:
\begin{equation}
\dot{v}=v(t)\left(\frac{1}{\sigma}-(\alpha+\beta)-\frac{u(t)}{\sigma}\right).
\end{equation}
Compared to the Lotka-Volterra model outlined above, (3.1) is equivalent
to the function describing prey. 

The second equation is established in a similar manner. To assist
in defining it, we begin with the Phillips Curve. Phillips \cite{Phillips58}
attempted to estimate a correlation between changes in wages and
unemployment rate. Goodwin took advantage of this relationship, however
for simplicity, he proposed a linearized version. While the exact
relationship is not known, this linearized version will suffice to
model the relationship. To transform the unemployment rate into the
employment rate, we begin by noting that the labor supply in our economy
is composed fully of employed and unemployed workers, \emph{i.e.}
$1=v+z$ where $z$ is defined to be the unemployment rate. Figure
9 graphically notes the transformation of the Phillips curve from
an unemployment rate to an employment rate. 

\begin{figure}

\begin{centering}
\includegraphics[scale=0.5]{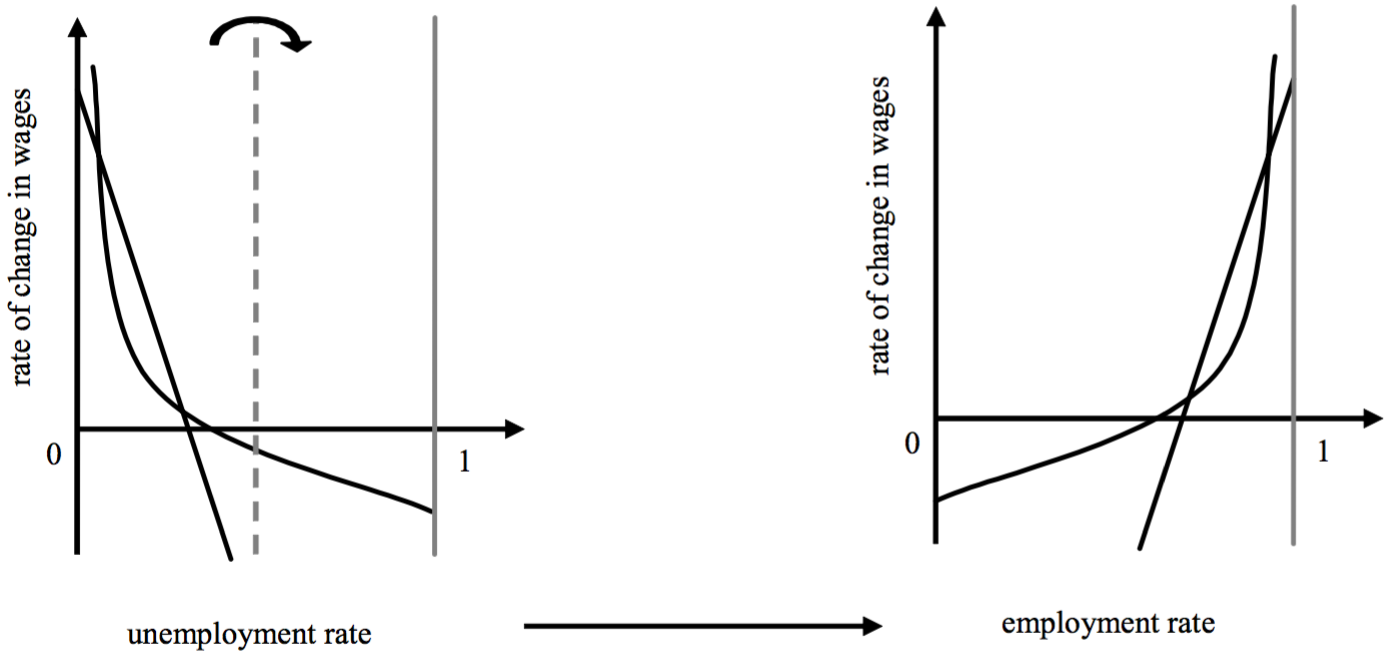}\caption{Transformation of the Phillips Curve}

\par\end{centering}

\end{figure}

With the transformed and linearized Phillips curve, we note
\begin{equation}
\frac{\dot{w}}{w}=-\gamma+\rho v.
\end{equation}
Here, $\gamma$ is defined to be the lines intersection with the $y$-axis
and $\rho$ is the slope of the line. We define worker's share of
national income as 
\begin{equation}
u=\frac{w}{a}.
\end{equation}
A logarithmic differentiation with respect to time yields the growth
rate of worker's share of national income
\[
\frac{\dot{u}}{u}=\frac{\dot{w}}{w}-\frac{\dot{a}}{a}
\]
\begin{equation}
\frac{\dot{u}}{u}=\frac{\dot{w}}{w}-\alpha.
\end{equation}
 Finally, plugging in (4.7) into (4.9) and solving for $u$ yields
the following equations for change in worker's share of national income:
\begin{equation}
\dot{u}=u(t)\left(-(\alpha+\gamma)+(\rho v(t))\right)
\end{equation}
The differential equation describing the dynamics of worker's share
of national income is analogous to the equation describing the predator
population in the Lotka-Volterra model.

To summarize, we have a system of differential equations to describe
the Goodwin Model:

\begin{align}
\dot{v}= & v(t)\left(\frac{1}{\sigma}-(\alpha+\beta)-\frac{u(t)}{\sigma}\right)\label{eq:2.17}\\
\dot{u}= & u(t)\left(-(\alpha+\gamma)+(\rho v(t))\right).\nonumber 
\end{align}
This is simply a version of the predator-prey model, where $a=\frac{-(\alpha+\beta)}{\sigma}$,
$b=\frac{1}{\sigma}$, $c=\alpha+\gamma$ and $d=\rho v$. Economically
speaking the first summand in the parenthesis represents the natural
growth rate of each variable, while the second summand gives the density.
When there is no employment, worker's share of national income tends
to zero; when worker's share of income tends to zero, the employment
rate increases since no relative labor costs exist. 

As with the predator-prey model, (4.11) has two fixed points: the
trivial fixed point at the origin and the non-trivial fixed point.
Since the trivial fixed point makes little economic sense, it is ignored.
The non-trivial fixed point is
\begin{align*}
v^{*}= & \frac{\alpha+\gamma}{\rho}\\
u^{*}= & 1-\sigma(\alpha+\beta).
\end{align*}

The Jacobian evaluated at the non-trivial fixed points is
\begin{equation}
\boldsymbol{J}=\left(\begin{array}{cc}
0 & -\frac{\alpha+\gamma}{\sigma\rho}\\
\rho(1-\sigma(\alpha+\beta)) & 0
\end{array}\right).
\end{equation}
As the system (4.11) and the Jacobian (4.12) are structurally identical
to the Lotka-Volterra equations, the Goodwin model is a conservative
dynamical system. Thus every initial point in the model is located
in a closed orbit. The behavior is like that of the general predator-prey
model, and is shown in Figure 10. There are four quadrants regarding
the fixed point $(u^{*},v^{*})$. The small arrows indicate the behavior
of worker's share of national income and the employment rate. It can
be described as follows: when labor share of national income ($u$)
is greater than $u^{*}$ and the employment ratio ($v$) is \emph{also
}greater than $v^{*}$, the main economic variables considered in
the Goodwin model all fall down because the pressure put on capitalist's
profits weakens the investment activities. The wage continues to rise
until $v$ becomes less than or equal to $v^{*}$. 

\begin{figure}
\begin{centering}
\includegraphics[scale=0.7]{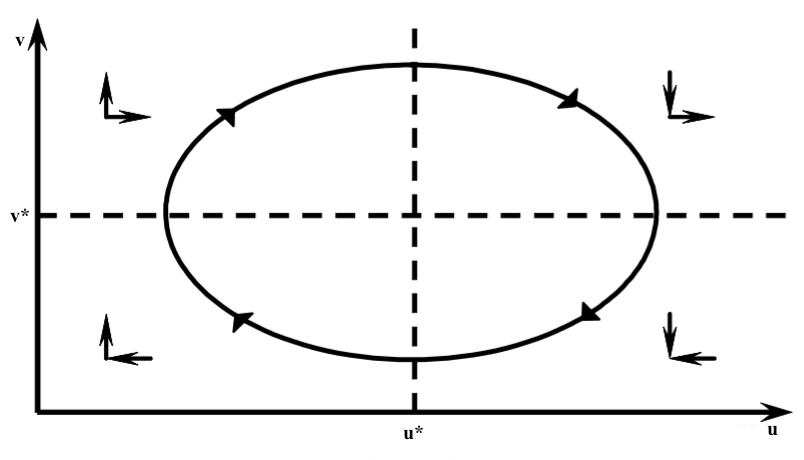}\caption{Behavior of Worker's Share and Employment Rate in Different Sectors}

\par\end{centering}

\end{figure}

The period of the cycles is like that in the Lotka-Volterra model
with respect to the variables:
\[
T=\frac{2\pi}{\sqrt{(\alpha+\gamma)\cdot(\frac{1}{\sigma}-(\alpha+\beta)}}
\]

Note the difference between the fixed points in the Goodwin model
as compared to the Lotka-Volterra model. The increase in productivity,
$\alpha$, is implemented into both differential equations. To achieve
a closed orbit around a defined coordinate $(u^{*},v^{*})$, the values
of the variables must be in a specific ratio to each other. 

The dynamics of this model lend theoretical credibility to Goodwin's
and thereby Marx's belief that a capitalist economy is permanently
oscillating. It is important to note, in an economic context, that
the analogy is superficial, as it does not refer directly to the \emph{functional}
income shares of capitalists and their workers or even to their population
size.

\subsection{Updated Goodwin Models}

As with the Lotka-Volterra model, the Goodwin model has been criticized
as too simplistic; namely that the model is composed under an isolated
set of assumptions which most likely do not reflect reality greatly.
The model is simple and exhibits oscillations, as shown. However,
the property of structural instability limits its applications. Many
modifications have been made to the model to more accurately model
changes in employment rate and worker's share of national income.
A few of these modifications, as well as their dynamical behavior,
are discussed below. 

The Lotka-Volterra model, and thus the original Goodwin model, are
dynamical systems whose behavior is very sensitive to small alterations
in their functional structure. We call dynamical systems of this sort
\emph{structurally unstable}. A number of the updates to the Goodwin
model focus on the structural instability that is found in all conservative
dynamical systems. To demonstrate the effect of small perturbations,
we consider an arbitrary but simple modification to the Phillips curve.
Before we assumed that the changes in wages were dependent on \emph{only}
 the employment rate. Now assume that they are dependent on employment
rate and worker's share of national income, a scenario that is a bit
more realistic:
\[
\frac{\dot{w}}{w}=f(v)+g(u).
\]
We assume that worker's share increase if workers are at a disadvantage
in the functional income distribution, which is reflected as $g(u)>0\mbox{ }\forall u$
and $g'(u)<0$. 

With this modified Phillips curve being considered, the updated Goodwin
Model leads equations are as follows: 
\begin{align}
\dot{v}= & v(t)\left(\frac{1}{\sigma}-(\alpha+\beta)-\frac{u(t)}{\sigma}\right)\label{eq:2.20}\\
\dot{u}= & u(t)\left(-(\alpha+\gamma)+(\rho v)+g(u)\right)\nonumber 
\end{align}
with the non new trivial fixed points being 
\begin{align*}
v^{*}= & \frac{\alpha+\gamma-g(u)}{\rho}\\
= & \frac{\alpha+\gamma-(\frac{1}{\sigma}-(\alpha+\beta))}{\rho}\\
u^{*}= & 1-\sigma(\alpha+\beta).
\end{align*}
The Jacobian evaluated at these fixed points is
\[
\boldsymbol{J}=\left(\begin{array}{cc}
0 & \frac{g(u^{*})-\alpha+\gamma}{\sigma\rho}\\
\rho(1-\sigma(\alpha+\beta)) & g'(u^{*})(\frac{1}{\sigma}-(\alpha+\beta))
\end{array}\right).
\]
Unlike before, the determinant of $\boldsymbol{J}$ is not always
positive. Suppose that $g'(u^{*})>0$ is sufficiently small such that
$\boldsymbol{J}$ is in fact positive. The trace of $\boldsymbol{J}$
will be different from zero even for a small value of $g(u^{*}),\mbox{ }u^{*}\neq0$.
By assumption of $g'(u)<0$, the trace is also negative. Hence, the
real parts of the complex conjugate eigenvalues are negative and the
fixed point $(v^{*},u^{*})$ is locally asymptotically stable. Thus
system (4.13) possess an attractor and is no longer a conservative
system but rather a dissipative system. 

Similar modifications, with an extended Phillips curve $f(u,v)$,
done by Cugno and Montrucchio \cite{Cugno82}, were able to provide
global stability results. Other modifications can also be constructed
which yield similar results. Samuelson \cite{Samuelson71,Samuelson72}
showed that when considering diminishing returns in the Lotka-Volterra
framework, it can change the classical behavior of a conservative
system. While Samuelson did not consider the Goodwin model \emph{directly
}- he commented directly on the general Lotka-Volterra model - diminishing
or increasing returns to scale are considered by assuming the capital-output
ratio changes with output, rather than being held constant. In fact,
the addition of a term that influences the growth rate of some variable
\emph{and} which depends on on the value of this variable is equivalent
to the introduction of a dampening effect. The conservative framework
of the original Goodwin equations will change and result in a dissipative
dynamical system. 

Increasing the dimension of the original model by considering additional
state variables is another way to modify Goodwin's work. Additional
state variables may include capital-output ratios, or non-constant
growth rates of the labor supply and labor productivity. It is also
possible to increase the dimension by introducing a lag-structure,
which is also more reflective of economic reality. Vadasz \cite{Vadasz07}
considers a number of these variations and implements them into one
cohesive model. 

Recall that in the absence of predators, prey grow exponentially.
Goodwin's original model assumed this. Economically, this means that
in the absence of wages, employment grows at an exponential rate and
quickly surpasses full employment, growing without bound. In reality,
however, the average labor share - defined as the ratio of employed
workers to the working age population (people between 15-65) - is
steadily below 75 percent. A number of problems arise when this type
of growth is considered. First, employment cannot increase without
limits and setbacks to productivity gains. In fact, diminishing returns
to productivity are typically found. Further, workers are not homogenous
and all do not have the same level of productivity. For example, during
the periods where labor is being shed (employment rate is falling),
the less trained and less productive workers will be the first to
go. Similarly in the opposite case, when employment rates are increasing,
the most productive will be hired first, though whatever is available
in the labor market will also be hired, meaning less knowledgeable
and skilled workers. 

To make the model more representative of reality, Vadasz considers
in the absence of wages, employment growing according to a logistic
growth function, rather than an exponential growth function
\[
\dot{v}=(\frac{1}{\sigma}-(\alpha+\beta))\frac{(1-v)}{K}v
\]
where $K$ represents the carrying capacity, or an upper bound on
employment growth. In his model, Vadasz lets $K=1$ which economically
translates into labor \emph{not} being able to surpass the total population
in a closed economy.

Similarly, Vadasz considers the reaction of labor share of national
income to employment and whether those two variables move in tandem.
Changes in employment do not have an instantaneous effect on the labor's
share. Rather, labor's share, reflected in wages, are sticky since
they are determined in advance by contracts and rarely do those contracts
take into effect future changes in demand for labor. In a recession,
for example, wage rates react sluggishly to growing unemployment.
The lowest wage rate usually is attained when the economy is already
growing again. The delay can be modeled by substituting in a weight
function to equation 2 of system (4.11). We replace $v$ with $z$
\[
z=\int_{0}^{t}u(\tau)G(t-\tau)d\tau
\]
where $G$ is a nonnegative integrable weight function with the following
property:
\[
\int_{-\infty}^{t}G(t-\tau)d\tau=\int_{0}^{\infty}G(s)ds=1.
\]
The growth rate of worker's share of national income depends on the
employment rate in the past. The way in which the growth rate depends
on past values is determined by the choice of weight function. The
choice of weight function further looks to remove the structural instability
of original Goodwin model. Consider the weight function to be 
\[
G(s)=G_{1}(s)=ae^{-as},\mbox{ }a>0.
\]
Here a $\frac{1}{a}$ discount reaction of worker's share of national
income is considered. Employers take into consideration the changes
in capital when setting wage contracts. Recall that, by assumption,
the capital ratio is fixed. Since this is the case, and output is
directly related to employment, capital depreciation will cause employers
to discount past employment levels when setting wages by a discount
rate $a$. Further, past employment levels that happen further away
will have a smaller effect on labor's share of national income, and
will continue to decrease exponentially as time goes on. 

We then have an extended Goodwin model where
\begin{align*}
\dot{v}= & v(t)\left((\frac{1}{\sigma}-(\alpha+\beta))\frac{(1-v)}{K}-\frac{u}{\sigma}\right)\\
\dot{u}= & -u(t)\cdot(\alpha+\gamma)+(\rho z)\\
= & -u(t)\cdot(\alpha+\gamma)+(\rho\int_{0}^{t}u(\tau)ae^{-a(t-\tau)}d\tau).
\end{align*}
Note that differentiating $z$ with respect to time yields the ordinary
differential equation 
\[
\dot{z}=a(v-z)
\]
 and thus with the previously derived equations, the Goodwin model
turns into a three-dimensional system
\begin{align*}
\dot{v}= & v(t)\left((\frac{1}{\sigma}-(\alpha+\beta))\frac{(1-v)}{K}-\frac{u}{\sigma}\right)\\
\dot{u}= & -u(t)\cdot(\alpha+\gamma)+(\rho\int_{0}^{t}u(\tau)ae^{-a(t-\tau)}d\tau)\\
\dot{z}= & a(v-z).
\end{align*}
The system has a fairly straightforward economic interpretation. In
the labor market, labor's share of national income will not be set
by actual employment $v$ but according to expectations of future
employment levels, which are based on past employment levels. These
expectations will continuously change and correct themselves as expressed
by $z$.

The system can be shown to have three fixed points. The trivial and
unstable fixed points $(0,0,0)$ and $(1,0,1)$, which represent the
absence of wages. The nontrivial fixed point is 
\begin{align*}
v^{*}= & \frac{\alpha+\gamma}{\rho}\\
u^{*}= & (\rho-(\alpha+\gamma))\cdot\sigma(1-\sigma(\alpha+\beta))\\
z^{*}= & \frac{\alpha+\gamma}{\rho}.
\end{align*}
The conservative character of the original two-dimensional Goodwin
model has disappeared through the introduction of an exponential lag
structure. As a result, a dissipative system has emerged. 

Since the Goodwin model is sensitive to small perturbations and suffers
from structural instabilities, as soon as a dissipative structure
prevails, this modified Goodwin system can exhibit converging or diverging
oscillations and limit cycles depending on the assumed dampening or
forcing terms present. These modifications still allow for oscillating
behavior of the economically most relevant variables, like labor share
of national income and employment rate. Further modifications to the
model include relaxing the assumption that the economy being modeled
is closed and considering an open economy with international trade,
which will be explored further later in the paper.

\section{Sheaf Methods for Analyzing Complex Systems}

\subsection{Motivation}

Complex dynamical systems, including the ones discussed above, can
be difficult to construct to accurately model specific phenomena.
They are even harder to study and analyze. We seek to unlock a set
of tools to help in this construction and analysis of these complex
systems. Sheaf theory provides an avenue to do so, lending a hand
in constructing and analyzing models that are described by a system
of equations. Sheaves, which will be formally defined later, are a
way to locally track data and synthesize these local bits into a consistent
whole. Complex multi-model systems, then, are easily stitched together
from diagrams of smaller models. This notion can be abstracted into
a more general framework, which is done by Robinson \cite{Robinson16}. 

In general, there is an interaction between the individual models
and how they interact together. To abstract to its most general setting,
the models consist of spaces and maps between them; thus it is natural
to consider the system's topology. By modeling the topology of the
system \emph{first} and then the spaces and maps of the individual
models that are specified by the system, this construction and procedure
naturally leads to sheaves. When dealing with sheaves, it is important
to note which type of sheaf we are analyzing \emph{and }a kind of
topological space. The theory going forward is built over partial
orders, which leads to a balance of expressivity and computational
ability that other space types do not readily lend to. The multi-modal
systems considered here are sheaves of smooth finite dimensional manifolds
on posets.

The construction of sheaves is the most fundamental way to express
the topological relationships between the variables and equations.
It also yields a diagrammatic intuition for how variables and equations
are all simultaneously related. Further, with the sheaf model, any
analysis done on the sheaf is equivalent to analysis done on the system
of equations, allowing us to study and analyze locally and globally
stable states of the system. 

In summary, the sheaf construction makes apparent and possible two
capabilities:
\begin{enumerate}
\item It allows one to combine seemingly different dynamical models into
a multi-model system in a fundamental and consistent manner.
\item The sheaf diagram allows for analysis of local and global sections
to be done in an easier manner than would be on the system of equations.
\end{enumerate}

\subsection{Goodwin Model as a Sheaf}

Recall the Lotka-Volterra model:
\begin{eqnarray*}
\frac{dv}{dt}= & \dot{v}= & v(t)\left(\frac{1}{\sigma}-(\alpha+\beta)-\frac{u(t)}{\sigma}\right)\\
\frac{du}{dt}= & \dot{u}= & u(t)\left(-(\alpha+\gamma)+(\rho v(t))\right).
\end{eqnarray*}

The dynamical model presented here involves a collection of two state
variables ($u$ and $v$) and two equations ($\dot{u}$ and $\dot{v}$),
which are both functions of time. For both equations, the values of
$u$ and $v$ determine all future values for the equation. As discussed
above, the solution exhibits interesting oscillatory behavior. One
way to gain an understanding in the behavior of the solutions is to
build out some visual representations of the system. These diagrams
will highlight on the nuances of the causal relationships between
the different state variables. 

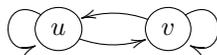
\begin{figure}
\[
\xymatrix{*++[o][F]{u}\ar@/_{.5pc}/[r]\ar@(ul,dl) & *++[o][F]{v}\ar@/_{.5pc}/[l]\ar@(ur,dr)}
\]
\caption{Variable Dependency Graph}

\end{figure}
Figure 11 describes the most basic dependency relationship between
the state variables. Here an arrow from one variable to the next -
\emph{i.e. $u\rightarrow v$ }- states that the future value of $v$
is partially dependent on the current value of $u$. This dependency
diagram, however, isn't entirely representative of what the equations
are describing, since there is no notion of a derivative present.
Now, if we include the derivatives of the state variables \emph{as
}state variables, we construct a larger diagram which provides a bit
more information about how the derivatives are determined by the values
of the state variables. Yet, it is not complete. It fails to fully
describe the relationship between the derivative and the state variable,
since, for example, $\frac{du}{dt}$ is determined both by the values
of $u$ (alone) and by $u$ and $v$ through the first equation in
our system. Our next step is for the encoding of this model to contain
all of this information into the diagram. 

\begin{figure}

\[
\xymatrix{ & *++[o][F]{u}\ar@/_{1pc}/[dl]^{\mbox{}}\ar@/^{1pc}/[dr]\\
*++[o][F]{\frac{du}{dt}} &  & *++[o][F]{\frac{dv}{dt}}\\
 & *++[o][F]{v}\ar@/^{1pc}/[ul]\ar@/_{1pc}/[ur]
}
\]
\caption{Expanded Dependency Graph}

\end{figure}
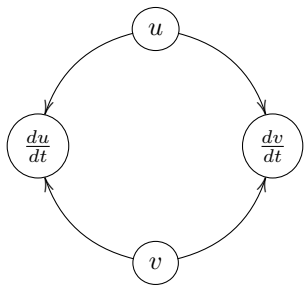

This encoding can be done simply by reinterpreting what the arrows
mean in our dependency diagram. Previously we interpreted the arrow
to mean that the variable at the head of the arrow is dependent on
the variable at the tail of the arrow. If we interpret the arrow as
a functional relationship \emph{rather} than a simple dependency,
this added information is encoded. This stronger requirement
is not present in either Figure 11 or 12. It is fairly clear why it
is not present in the Figure 11; a functional dependence is not present
in Figure 12 since, from how our system is defined, the formula $\frac{du}{dt}$
is dependent on both $u$ and $v$. To define the functional dependence
between $u$, $v$, and $\frac{du}{dt}$, it needs to come from the
pair $(u,v)$. Performing this transformation to the dependency diagram,
we obtain the following figures. The next two figures are composed
of two different types of functional dependency graphs; Figure 13
shows the functional dependencies between the state variables and
their derivatives according to variable names, while Figure 14 shows
the same relationship according to the spaces of values involved.

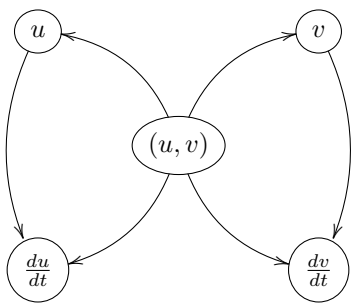
\begin{figure}
\[
\xymatrix{*++[o][F]{u}\ar@/_{1pc}/[dd] &  & *++[o][F]{v}\ar@/^{1pc}/[dd]\\
 & *++[o][F]{(u,v)}\ar@/_{1pc}/[dr]\ar@/^{1pc}/[dl]\ar@/^{1pc}/[ur]\ar@/_{1pc}/[ul]\\
*++[o][F]{\frac{du}{dt}} &  & *++[o][F]{\frac{dv}{dt}}
}
\]

\caption{Functional Dependencies - Variable Names}
\end{figure}

\begin{figure}
\[
\xymatrix{*+++[o][F]{C^{1}(\mathbb{R},\mathbb{R})}\ar@/_{1pc}/[dd] &  & *+++[o][F]{C^{1}(\mathbb{R},\mathbb{R})}\ar@/^{1pc}/[dd]\\
 & *++++[o][F]{C^{1}(\mathbb{R},\mathbb{R}^{2})}\ar@/_{1pc}/[dr]\ar@/^{1pc}/[dl]\ar@/^{1pc}/[ur]\ar@/_{1pc}/[ul]\\
*+++[o][F]{C^{0}(\mathbb{R},\mathbb{R})} &  & *+++[o][F]{C^{0}(\mathbb{R},\mathbb{R})}
}
\]

\caption{Functional Dependencies - Variable Spaces}
\end{figure}
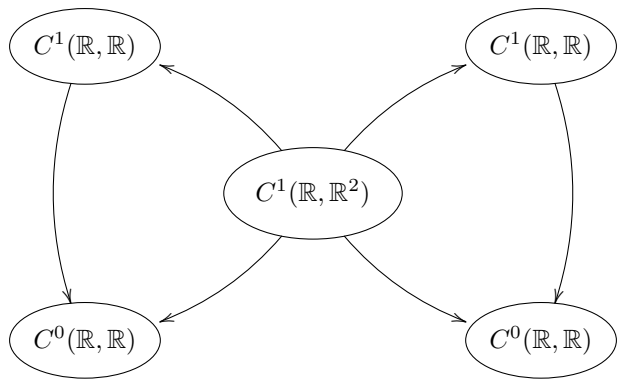

There are a number of advantages to the functional dependency diagram
described above. A useful property of their structure is that sheaves
are path independent, \emph{i.e. }the diagram commutes. Second, the
arrows from the variables or spaces are actual functions, and if one
so desires, they can be labeled as such. The arrows which correspond
to the pairs of variables to an individual variable (\emph{i.e.} $(u,v)\rightarrow u$)
are projections, while the others are are determined the definition
of derivative, and by the equation of the system. A third advantage
to this diagram is that all information from our system is captured
in the dependency diagram \emph{and} the equations can be recovered
from the diagram. Analysis that is applied to the diagram, such as
finding and studying global and local sections, is equivalent to finding
those in our system of equations. 

It is useful to look at the number of functional equations that constrain
a variable value. This is defined as the \emph{in-degree} of a variable.
Our functional dependence diagram easily allows us to identify the
in-degree values of variables in our system. Independent variables
are those which have no arrows pointing into them, \emph{i.e. }the
pair $(u,v)$. However, simply because they are independent variables
doesn't mean that they have no constraints, just that there are no
functional dependencies. Constraints, on the other hand, arise by
requiring each value to take on only one value. For example, if a
variable is determined by two functional equations, as is the case
in the Goodwin model, the independent variables in those two equations
must be chosen compatibly. There are two other possibilities for the
variables: they are completely dependent or intermediate variables.
Both $\frac{du}{dt}$ and $\frac{dv}{dt}$ are examples of dependent
variables since there exist no arrows going out of them, while the
variables $u$ and $v$ are intermediate variables since arrows go
in and out of them. 

Figure 13 is that of a partially ordered set. The advantage of this
method is the partial order ranks the variables in our system according
to their independence of each other. The arrows in the diagram point
from lower variables to higher variables in our partial order. Thus
we can think of the most independent variables as the minimal elements
of the partial order and the completely dependent variables as the
maximal elements of the partial order. 

It is easy to see Figure 14 has the same diagrammatic structure as
the partial order. However, the labeling is different. This mathematical
representation presented here is the sheaf of the Goodwin model. Recall
that the sheaf is a way to represent local consistency relationships.
As will be outlined in the next subsection, sheaves are equipped with
a number of useful properties which yield descriptive power for systems
of equations. Further, simply observing the structure of the dependency
diagram is generally illuminating in that it helps us understand the
sometimes complex relationships among variables.

\subsection{Mathematical Construction: Sheaves on Posets}

According to the analysis outlined by Robinson, topological spaces
in their full generality admit some properties that are not reflected
in practical models and thus needs constraints. Other topological
spaces aside from partially ordered sets vary in expressivity. Cell
complexes, locally finite topological spaces, abstract simplicial
complexes, and partial orders are most useful for modeling systems.
Partial orders, however, appear to be the most useful as each computational
example can be expressed with them\cite{Robinson16}. What follows,
therefore, is a description of sheaves on partial orders, since they
are the primary mathematical tool which will be used going forward.
For computational ease, we will be dealing with locally finite posets. 
\begin{defn}
A \emph{partial order} on a set $P$ is a relation $\leq$ on the
set that is 

1. Reflexive: $x\leq x$ $\forall x\in P$ 

2. Antisymmetric: If $x\leq y$ and $y\leq x$, then $x=y$

3. Transitive: If $x\leq y$ and $y\leq z$, then $x\leq z$.
\end{defn}
A pair $(P,\leq)$ is called a \emph{partially ordered set} (or \emph{poset}
for short). Typically when context is clear, we will denote $P=(P,\leq)$.
Further, a poset is called \emph{locally finite} if the set $\{z\in P:x\leq z\leq y\}$
is finite, given every pair $x,y\in P$. 

\begin{figure}

\begin{centering}
$\xymatrix{ & {c}\\
{a}\ar[ur] &  & {b}\ar[ul]\\
 & {d}\ar[ul]\ar[ur]
}
$$\xymatrix{ & {\mathscr{S}(c)}\\
{\mathscr{S}(a)}\ar[ur] &  & {\mathscr{S}(b)}\ar[ul]\\
 & {\mathscr{S}(d)}\ar[ul]\ar[ur]
}
$\caption{A poset $P$ (left) and a sheaf over $P$ (right) (Definition 5.3)
\cite{Robinson16}}

\par\end{centering}

\end{figure}
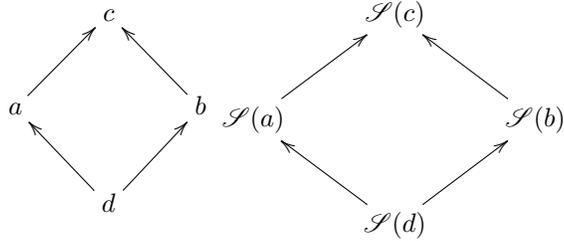

Figure 15 shows a number of diagrams. The digram to the left is that
of a poset $P$ with four elements. The poset is to be interpreted
as follows: $d\leq a\leq c$ in $P.$ The figure also includes a diagram
for a sheaf. The sheaf can be explained in terms of the diagram of
the poset. In the poset, each vertex represents an element and each
arrow points from lesser elements to greater ones. By replacing each
vertex and arrow by a set (or space) and a function respectively,
such that the composition is path independent (it does not matter
which path is taken to get from $\mathscr{S}(d)$ to $\mathscr{\mathscr{S}}(c)$),
we have created a sheaf. If all of the functions' inputs are at the
tail of the arrow, then the diagram is that of a sheaf on the Alexandroff
topology (denoted by $\mathscr{S}(\cdot)$). 
\begin{defn}
\cite{Robinson16} In a poset $(P,\leq)$, the collection of sets
of the form 
\[
U_{x}=\{y\in P:x\leq y\}
\]
for each $x\in P$ forms a base for a topology, called the \emph{Alexandroff
}topology, shown in Figure 15.
\end{defn}
This topology is particularly important as it can be built from a
partial order. 
\begin{defn}
\cite{Robinson16} Suppose that $P=(P,\leq)$ is a poset. Then, a
\emph{sheaf $\mathscr{S}$ of sets on $P$ with the Alexandroff topology
}consists of the following:

1. For each $p\in P$, a set $\mathscr{S}(p)$, is called the \emph{stalk}
at $p$,

2. For each pair $p\leq q\in P$, there exists a function $\mathscr{S}(p\leq q):\mathscr{S}(p)\rightarrow\mathscr{S}(q)$,
which is called a \emph{restriction function} (\emph{restriction}),
such that

3. For each triple $p\leq q\leq r\in P,$ $\mathscr{S}(p\leq r)=\mathscr{S}(q\leq r)\circ\mathscr{S}(p\leq q)$,
\emph{i.e.} the diagram commutes.
\end{defn}
If any of the conditions above are not satisfied, we do not have a
sheaf. These constructions are called \emph{diagrams} rather than
sheaves. 

Robinson observes that the stalks can have structure: they can be
vector spaces or topological spaces, for example. Suppose that the
given stalks have structure, and the restriction or extension functions
preserve that structure. We are then left with a sheaf \emph{of }that
type of structure. For example, a sheaf of vector spaces that has
linear functions for each restriction map. 

The encoding of systems of equations as sheaves illustrates a number
of consistencies and inconsistencies between the component models
\cite{Robinson16}. Elements of the stalks that are mutually consistent
across the entire sheaf diagram - and thereby the entire system -
are called \emph{sections}. In other words, the output of the combined
system that corresponds to satisfying the system of equations are
the sections. 
\begin{defn}
\cite{Robinson16} A\emph{ global section} of a sheaf $\mathscr{S}$
on a poset $P$ is an element $s$ of the direct product $\prod_{x\in P}\mathscr{S}(x)$
such that for all $x\leq y\in P$, $\mathscr{S}(x\leq y)(s(x))=s(y)$.
A \emph{local section} is defined similarly, however, it is only defined
on a some subset $Q\subseteq P$.
\end{defn}

\subsection{Systems of Equations}

To begin, consider a multi-model system described by a system of equations.
Here, the set of variables $V$ values lie in the set $W_{v}$ for
all $v\in V$. Each variable is further interrelated though a set
of equations $E$ such that each equation $e\in E$ specifies a list
of variables $V_{e}\subset V$ and a subset of solutions $S_{e}\subseteq\prod_{v\in V_{e}}W_{v}$.
Notice that there exist a natural projection functions for each $x\in V_{e}$
\emph{i.e.} $pr_{x}:\prod_{v\in V_{e}}W_{v}\rightarrow W_{x}$. This
allows us to define a poset structure, since these projection functions
restrict to functions on $S_{e}$ such as $pr_{x}:S_{e}\rightarrow W_{x.}$
The poset structure is defined in the following way. Let $P=V\sqcup E$,
\emph{i.e. }variables of $P$ are either variables are equations and
define $e\leq v$ if $v\in V_{e}$. This then defines a partial order
on $P$ if we assume that $\leq$ is reflexive. 
\begin{defn}
\cite{Robinson16} A \emph{sheaf} $\mathscr{E}'$ on $(P,\leq)$ can
be defined by specifying the following:

1. $\mathscr{E}'(v)=W_{v}$ for every variable $v$,

2. $\mathscr{E}'(e)=\prod_{v\in V_{e}}W_{v}$ for every equation $e$,

3. $\mathscr{E}'(e\leq v)=p_{v}$ whenever $e\leq v$. 
\end{defn}
A number of important claims follow about the sections of the system
of equations and how they correspond to the sections of the sheaf.
\begin{prop}
\cite{Robinson16} Assume that each variable $v$ appears in at least
one equation. Then the set of sections of $\mathscr{E}'$ is in one-to-one
correspondence with $\prod_{v\in V_{e}}W_{v}$.\end{prop}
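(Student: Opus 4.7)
The plan is to exhibit an explicit bijection
$\Phi:\Gamma(\mathscr{E}')\rightarrow\prod_{v\in V}W_{v}$
between the set of global sections and the product of variable value spaces, by sending a section $s$ to the tuple $(s(v))_{v\in V}$ of its values on the variable stalks. Since the restriction maps out of each equation stalk are exactly the coordinate projections, the equation-stalk component of any section is forced by its variable-stalk components, so $\Phi$ will recover all the data of $s$ and be invertible.

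First I would unwind the definition of a section in this setting. By Definition 5.6, a section $s$ specifies for each $v\in V$ an element $s(v)\in \mathscr{E}'(v)=W_{v}$ and for each equation $e\in E$ an element $s(e)\in\mathscr{E}'(e)=\prod_{v\in V_{e}}W_{v}$, subject to $\mathscr{E}'(e\leq v)(s(e))=s(v)$ for every $v\in V_{e}$. Because $\mathscr{E}'(e\leq v)$ is the projection $pr_{v}$, this compatibility condition reads $pr_{v}(s(e))=s(v)$ for all $v\in V_{e}$. The poset $(P,\leq)$ has chains only of the form $e\leq v$ (variables are maximal, equations are minimal among comparable elements), so there is no further triple-wise condition from item 3 of Definition 5.5 to verify.

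Next I would show $\Phi$ is injective. If $s$ and $s'$ are sections with $s(v)=s'(v)$ for all $v\in V$, then for each equation $e$ the tuple $s(e)\in\prod_{v\in V_{e}}W_{v}$ is completely determined coordinate-by-coordinate by the projections $pr_{v}(s(e))=s(v)$, and likewise for $s'(e)$. Therefore $s(e)=s'(e)$ for every $e$, giving $s=s'$. For surjectivity, given a tuple $w=(w_{v})_{v\in V}$, define a candidate section by $s(v):=w_{v}$ and $s(e):=(w_{v})_{v\in V_{e}}$. Then for every cover relation $e\leq v$ we have $\mathscr{E}'(e\leq v)(s(e))=pr_{v}\bigl((w_{v'})_{v'\in V_{e}}\bigr)=w_{v}=s(v)$, so $s$ is a genuine global section mapping to $w$ under $\Phi$. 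The hypothesis that every variable appears in at least one equation is what guarantees that each variable stalk is reached by some restriction map, so the data assigned on the variables is actually constrained by (and therefore uniquely recoverable from) the equation stalks in the sheaf diagram.

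The steps are each essentially bookkeeping, so there is no deep obstacle; the only subtle point is recognizing that item 3 of Definition 5.5 imposes nothing new because chains in $P$ have length at most one, and that the equation stalks carry no further constraint beyond the projection-compatibility (the solution subsets $S_{e}$ do not enter the definition of $\mathscr{E}'$). With those observations in hand, the two directions above combine to give the claimed bijection.
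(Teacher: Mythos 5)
Your proposal is correct and follows essentially the same route as the paper: the paper's (much terser) proof likewise observes that a section determines, and is determined by, its values on the variable stalks, which is exactly your bijection $\Phi$ with the equation-stalk components forced by the projections. Your version simply makes the injectivity and surjectivity checks explicit; the only quibble is that the hypothesis about every variable appearing in some equation is not really load-bearing for the bijection itself, so your closing remark attributes more to it than the argument actually uses.
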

\begin{proof}
Notice that each section of $\mathscr{E}'$ specifies all values of
all variables. This follows since each $v\in P$ and its stalk corresponds
to its respective space of values. Further, specifying the value
of each variable clearly specifies a section of $\mathscr{E}'$. 
\end{proof}
The aggregation of sheaf $\mathscr{E}'$ does not account for the
actual equations; rather it simply specifies the variables which are
involved. To account for this loss of information, we construct a
sub-sheaf $\mathscr{E}$ of $\mathscr{E}'$. 
\begin{defn}
\cite{Robinson16} The \emph{solution sheaf} $\mathscr{E}$ of a system
of equations is given by the following:

1. $\mathscr{E}(v)=W_{v}$ for every variable $v$,

2. $\mathscr{E}(e)=S_{e}$ for every equation $e$,

3. $\mathscr{E}(e\leq v)=p_{v}$ whenever $e\leq v$. 
\end{defn}
Recall that $S_{e}\subseteq\prod_{v\in V_{e}}W_{v}$ is the set of
solutions to $e.$ Further, the next proposition outlines a rather
important fact about the relationship between the sections of $\mathscr{E}$
and the solutions to the system of equations.
\begin{prop}
\cite{Robinson16} The sections of $\mathscr{E}$ consist of solutions
to the simultaneous system of equations.\end{prop}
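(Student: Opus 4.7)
The plan is to establish a bijection between global sections of $\mathscr{E}$ and simultaneous solutions of the system of equations by unpacking the defining conditions of a section in this concrete setting.

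First, I would spell out what a section $s \in \prod_{x \in P} \mathscr{E}(x)$ looks like when $P = V \sqcup E$. For each variable $v \in V$, the component $s(v)$ lies in $\mathscr{E}(v) = W_v$, and for each equation $e \in E$, the component $s(e)$ lies in $\mathscr{E}(e) = S_e \subseteq \prod_{v \in V_e} W_v$. The section condition requires that whenever $e \leq v$ (equivalently, whenever $v \in V_e$), we have $\mathscr{E}(e \leq v)(s(e)) = s(v)$, which by definition of the restriction means $p_v(s(e)) = s(v)$.

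Next, I would argue the forward direction: given a section $s$, define $w_v := s(v)$ for each $v \in V$. For any equation $e$, the tuple $s(e)$ lies in $S_e$ by the very definition of the stalk at $e$, and the compatibility condition forces its $v$-component to equal $s(v) = w_v$ for each $v \in V_e$. Hence $(w_v)_{v \in V_e} = s(e) \in S_e$, so the collection $(w_v)_{v \in V}$ simultaneously satisfies every equation in the system. For the reverse direction, given a simultaneous solution $(w_v)_{v \in V}$, I would define $s(v) := w_v$ and $s(e) := (w_v)_{v \in V_e} \in S_e$ (this is automatic since $(w_v)$ is a solution), and then verify that $p_v(s(e)) = w_v = s(v)$ whenever $e \leq v$, so the section axiom holds.

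There is not really a hard step here — the proposition is essentially a definitional unpacking, and the main thing to watch is bookkeeping: one must keep the two roles of the tuple $s(e)$ straight (as an element of a stalk on the one hand, and as a solution tuple whose coordinates must match the variable stalks on the other). The only subtlety worth flagging is that the construction relies on each equation's solution set $S_e$ being a \emph{subset} of $\prod_{v \in V_e} W_v$, so the projections $p_v$ restrict to $S_e$ automatically, exactly as noted in the set-up preceding Definition 5.6.
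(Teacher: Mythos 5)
Your proof is correct and follows essentially the same route as the paper's: both directions are a definitional unpacking, with a solution identified with an element of $\prod_{v\in V}W_{v}$ whose projections land in each $S_{e}$, and the section axiom verified via the projection maps. Your write-up is in fact somewhat more explicit than the paper's about checking the compatibility condition $p_{v}(s(e))=s(v)$ in both directions.
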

\begin{proof}
First observe that a section $s$ of $\mathscr{E}$ specifies an element
$s(e)\in S_{e}$ for every equation $e\in E$ that satisfies that
condition. On the other hand, let the solution to a simultaneous system
of equations be given. Then this is a specification of some element
$x\in\prod_{v\in V}W_{v}$ where the projection of x onto $\prod_{v\in V}W_{v}$
lies in $S_{e}$. In other words, an assignment onto each variable
$v\in V$ is given by the following: 
\[
s(v)=pr_{v}x\mbox{, and }s(e)=pr_{\mathscr{E}'(e)}x.
\]
Observe that by construction $s(e)\in S_{e}=\mathscr{S}(e)$.
\end{proof}
Up to this point we have been dealing with systems of \emph{arbitrary
}equations. However, there is generally more structure available to
us. When available, the stalks of the sheaf $\mathscr{E}$ over the
variables can be reduced in size. This results in computational savings.
Further, equations usually take on the form
\[
v_{n+1}=f_{e}(v_{1},\dots,v_{n}).
\]
In the following cases, it is most helpful to employ a \emph{dependency
graph} to visualize the relationships among the variables. 
\begin{defn}
\cite{Robinson16} A system of equations $E$ on variables $V$ is
called \emph{explicit} if there is an injective function $\gamma:E\rightarrow V$
that selects a specific variable from each equation such that each
equation $e\in E$ has the form 
\[
\gamma(e)=f_{e}(v_{1}\dots v_{n})
\]
such that $\gamma(e)\in V(e)$ and $\gamma\notin\{v_{1}\dots v_{n}\}.$
If any variable is outside the image of $\gamma$, then that variable
is said to be \emph{free }or \emph{independent. }Conversely, if a
variable is inside the image of $\gamma$ then that variable is said
to be \emph{dependent. }
\end{defn}
When dealing with an explicit system a dependency graph can be defined
in the following way
\begin{defn}
\cite{Robinson16} A \emph{variable dependency graph }for an explicit
system is a directed graph $G$ whose vertices are are given by the
union $E\bigcup(V\setminus\gamma(E))$. This consists of the set of
equations and free variables such that the following hold: 

1. Free variables have in-degree zero,

2. If $e$ is a vertex of $G$ corresponding to an equation whose
incoming edges are given by $(e_{1}\rightarrow e),\dots,(e_{n}\rightarrow e)$,
then the equation $e\in E$ is of the form 
\[
\gamma(e)=f_{e}(\gamma(e_{1}),\dots\gamma(e_{n})).
\]
\end{defn}
\begin{example}
Recall the Goodwin model defined by (4.11). Then note that the Goodwin
model is an explicit system and its dependency graph is shown
in Figures 11 and 12 in subsection 5.2.
\end{example}
If $E$ is an explicit system of equations with variable V, then it
is possible to construct the explicit solution sheaf $\mathscr{G}$.
The underlying poset for $\mathscr{G}$ is still given by the union
of the variables and the equations, however the stalks and restriction
maps are slightly different. 
\begin{defn}
\cite{Robinson16} The \emph{explicit solution sheaf }$\mathscr{G}$
whose sections are the simultaneous solutions of $E$ is defined by
the following:

1. $\mathscr{G}(v)=W_{v}$ for each variable $v\in V$,

2. $\mathscr{G}(v)=\prod_{x\in V_{e}\setminus\gamma(e)}W_{x}$,

3. $\mathscr{G}(e\leq\gamma(e))=f_{e}$, and 

4. $\mathscr{G}(e\leq v):\prod_{x\in V_{e}\setminus\gamma(e)}W_{x}\rightarrow W_{v}$
is given by an appropriate projection if $v\neq\gamma(e)$.\end{defn}
\begin{example}
The explicit solution sheaf for the Goodwin system defined in (4.11)
is shown in Figure 13 in subsection 5.2. \end{example}
\begin{prop}
\cite{Robinson16} The sections of an explicit solution sheaf $\mathscr{G}$
are in one-to-one correspondence with the simultaneous solutions of
its system of equations. \end{prop}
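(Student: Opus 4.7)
The plan is to establish the bijection by constructing explicit maps in both directions and verifying they are mutually inverse, closely paralleling the earlier proposition relating sections of $\mathscr{E}$ to solutions but keeping track of the extra content packed into the restriction $\mathscr{G}(e \leq \gamma(e)) = f_e$. Let $E$ denote the explicit system with variables $V$, and for each equation $e \in E$ write $V_e \setminus \gamma(e) = \{v_1, \ldots, v_n\}$ for its free variables, so that the equation itself has the shape $\gamma(e) = f_e(v_1, \ldots, v_n)$.

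For the forward direction, I would take a section $s$ of $\mathscr{G}$ and define $\phi(v) = s(v) \in W_v$ for every $v \in V$. To check that $\phi$ satisfies every equation $e$, first apply the section condition to each covering relation $e \leq v_i$ with $v_i$ a free variable of $e$: since $\mathscr{G}(e \leq v_i)$ is a coordinate projection, this forces $s(e) = (\phi(v_1), \ldots, \phi(v_n))$ as an element of $\prod_{x \in V_e \setminus \gamma(e)} W_x$. Then the remaining section condition for $e \leq \gamma(e)$ yields $f_e(\phi(v_1), \ldots, \phi(v_n)) = f_e(s(e)) = s(\gamma(e)) = \phi(\gamma(e))$, which is exactly equation $e$ being solved by $\phi$.

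For the reverse direction, given a simultaneous solution $\phi \in \prod_{v \in V} W_v$, I would set $s(v) = \phi(v)$ and $s(e) = (\phi(v_1), \ldots, \phi(v_n))$. The compatibility conditions split cleanly: for $e \leq v$ with $v$ a free variable the restriction is a projection and returns $\phi(v) = s(v)$; for $e \leq \gamma(e)$ the restriction is $f_e$ and returns $f_e(\phi(v_1), \ldots, \phi(v_n)) = \phi(\gamma(e)) = s(\gamma(e))$, the last equality holding because $\phi$ solves $e$. The two constructions are then inverse on the nose: going solution-to-section-to-solution recovers $\phi$ by definition, and going section-to-solution-to-section recovers $s$ at every variable trivially and at every equation because the projection conditions had already pinned $s(e)$ down as the tuple of its free-variable values.

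The main obstacle is bookkeeping rather than depth: one must carefully match indices when unpacking $\mathscr{G}(e \leq \gamma(e)) = f_e$, since the definition of an explicit system demands $\gamma(e) \notin \{v_1, \ldots, v_n\}$, so that the stalk $\mathscr{G}(e)$ is indexed only by the free variables of $e$ while the equation $\gamma(e) = f_e(v_1, \ldots, v_n)$ places $\gamma(e)$ on the left. Once that index matching is made explicit, the sheaf axioms translate verbatim into the condition that $\phi$ satisfies every equation in $E$, and the claimed one-to-one correspondence follows.
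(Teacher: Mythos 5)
Your proof is correct, but it takes a different route from the paper's. The paper's proof is a reduction: it observes that for an explicit equation $\gamma(e)=f_{e}(v_{1},\dots,v_{n})$ the solution set $S_{e}$ is precisely the graph of $f_{e}$, and then invokes Proposition 5.2 (sections of the solution sheaf $\mathscr{E}$ correspond to simultaneous solutions) to conclude. You instead build the bijection from scratch, exhibiting both maps and checking they are mutually inverse. The two approaches buy different things. The paper's argument is shorter, but it silently identifies the stalk $\mathscr{G}(e)=\prod_{x\in V_{e}\setminus\gamma(e)}W_{x}$, which omits the $\gamma(e)$-coordinate, with the graph $S_{e}\subseteq\prod_{v\in V_{e}}W_{v}$, which includes it; that identification is a bijection intertwining the restriction maps, but the paper never says so. Your direct computation makes exactly this point explicit: the projection conditions pin $s(e)$ down as the tuple of free-variable values, and the single remaining condition $\mathscr{G}(e\leq\gamma(e))=f_{e}$ is then literally the statement that equation $e$ holds. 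In that sense your proof is more self-contained and fills the gap the paper leaves, at the cost of not exploiting the already-proved Proposition 5.2.
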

\begin{proof}
Assume that $e$ is an equation in the explicit system of the form
\[
\gamma(e)=f_{e}(\gamma(e_{1}),\dots\gamma(e_{n})).
\]
Then notice that $S_{e}=\{v_{1},\dots v_{n},f_{e}(\gamma(e_{1}),\dots\gamma(e_{n})):v_{i}\in W_{\gamma(e_{i})}\}.$
Thus, the Proposition follows directly from Proposition 5.2.
\end{proof}

\subsubsection{Ordinary Differential Equations}

The framework developed in the previous section extends nicely to
ordinary differential equations, which is the main object being considered
throughout this paper. Differential equations give rise to sheaves
of solutions, upon which various types of analyses can be conducted.
Consider an ordinary differential equation, given by
\begin{equation}
u'=f(u),
\end{equation}
 where $u\in C^{1}(\mathbb{R},\mathbb{R}^{d})$ is a continuously
differentiable function. There are two ways to consider $u$ and $u'$:
either as one variable or as two separate variables. If we first consider
them as one variable, we note that this is essentially writing (5.1)
as:
\[
0=F(u)=f(u)-\frac{d}{dt}u.
\]
The solutions of (5.1) are the sections of the sheaf diagram:

\[
\xymatrix{C^{1}(\mathbb{R},\mathbb{R}^{d})\\
\{u:F(u)=0\}\subseteq C^{1}(\mathbb{R},\mathbb{R}^{d})\ar[u]^{\mbox{id}}
}
\]
When looking to conduct our analysis, this sheaf lacks some important
information; too much of the structure of (5.1) is hidden in the function
$F$. 

We now turn to considering $u$ and $u'$ as separate variables. The
initial construction of the sheaf yields a similar structure, 
\[
\xymatrix{C^{0}(\mathbb{R},\mathbb{R}^{d}) & C^{1}(\mathbb{R},\mathbb{R}^{d})\\
C^{1}(\mathbb{R},\mathbb{R}^{d})\ar[u]^{f}\ar[ur]^{\mbox{id}}
}
\]
where $u$ is on the top right and $u'$ is on the top left. However,
there is still some information buried. Notice that the solutions
of (5.1) correspond to sections of this sheaf. However, the converse
is not true! Our diagram is missing another equation that links $u$
and $u'$; they are related through differentiation. Including this
relationship yields the following sheaf diagram

\[
\xymatrix{C^{0}(\mathbb{R},\mathbb{R}^{d}) & C^{1}(\mathbb{R},\mathbb{R}^{d})\\
C^{1}(\mathbb{R},\mathbb{R}^{d})\ar[u]^{f}\ar[ur]^{\mbox{id}} & C^{1}(\mathbb{R},\mathbb{R}^{d})\ar[u]_{\mbox{id}}\ar[ul]^{\mbox{\ensuremath{\frac{d}{dt}}}}
}
\]
This sheaf's sections are now in one-to-one correspondence with the
differential equation. 

From here, several stalks of the sheaf can be collapsed together,
without disrupting the space of global sections. A cleaner sheaf arises,
yet it still reflects all of the relevant information: 
\[
\xymatrix{C^{0}(\mathbb{R},\mathbb{R}^{d}) & C^{1}(\mathbb{R},\mathbb{R}^{d})\ar[l]_{\frac{d}{dt}}\\
S\subseteq C^{1}(\mathbb{R},\mathbb{R}^{d})\ar[u]^{f}\ar[ur]^{\mbox{id}}
}
\]
where $S$ is the space of solutions.

\section{Goodwin Models and International Trade}

\subsection{Two Countries with Horizontal Trade}

It is at this point that we return to the Goodwin growth model to
apply some of the sheaf theoretic applications just introduced. Recall
the basic framework of the Goodwin model. In the original model, and
all the updates that were discussed, one assumption reminded constant:
the economy that was being analyzed was a closed economy. One possible
way to extend the Goodwin model, thereby making it more representative
of phenomena, is to consider an open economy that is trading with
other countries. The framework employed here mimics that of Ishiyama
\cite{Dummy1}, though with some divergences. For example, Ishiyama
builds his two country Goodwin model through difference equations,
while this paper will continue to do so with differential equations.
These differences are subtle, and while they don't affect phenomena
such as equilibrium, dynamic results can vary. 

The motivation for expanding the Goodwin model is three-fold. First,
aside for the work done by Ishiyama, it seems this idea of extending
the Goodwin framework to an international setting has not been done.
It is not entirely clear why this is the case. The Goodwin framework
elegantly models fluctuations and cycles within an economy, thus making
it an attractive candidate to expand. It appears that this question
was not tackled in the past not because it is uninteresting or of
little value, but may be because the mathematics that surround extending
the model to many countries is daunting. Second, extending the Goodwin
model requires a systematic approach. As the number of countries increases,
it becomes ever more useful to have a consistent framework which allows
us to conduct our analysis. Sheaf theory provides that framework.
Third, and most relevant toward the fields of economics and policy,
the types of questions that can be asked and answered using a sheaf
theoretic approach allows us to provide new insight into topics that
were once difficult to answer. 

For simplicity, we begin by considering a two-country two-good model
in which we assume horizontal trade occurs, \emph{i.e.} the goods
traded have no relation to each other in terms of factors of production.
Considering two goods is suitable from a theoretical standpoint. However,
if we want to test how good this model is in reality, we would consider
a basket of goods, rather than a single good. Consider, for example,
guns and butter. We formally employ the following assumptions: \\
\textbf{}\\
\textbf{(A1)} There are two countries that produce different goods.\textbf{}\\
\textbf{(A2)} Each country has a steady growth of technical progress.\\
\textbf{(A3)} Each country has a steady growth of the labor force.\\
\textbf{(A4)} In each county there exist only two homogeneous and
non-specific factors of production: capital and labor.\textbf{}\\
\textbf{(A5)} There is no mobility of either capital or labor between
the two countries.\textbf{}\\
\textbf{(A6)} All quantities are real and net.\\
\textbf{(A7)} All wages are consumed; all profits are saved and invested.\\
\textbf{(A8)} The utility of the consumer is maximized at a positive
combination of two goods. Thus horizontal trade continues permanently.\\
\textbf{(A9)}The change in the money wage rate is determined by a
Phillips Curve. \\
\textbf{(A10)} The capital-output ratio is constant.\\
\textbf{(A11)} The two countries use the same currency.\\
\\
From these assumptions, and the derivation of the Goodwin model above,
we can represent our two countries as four unique differential equations:
\begin{align}
\dot{v}_{i}= & v_{i}(t)\left(\frac{1}{\sigma_{1}}-(\alpha_{i}+\beta_{i})-\frac{u_{i}(t)}{\sigma_{i}}\right)\label{eq:6.1}\\
\dot{u}_{i}= & \frac{u_{i}(t)}{p_{i}(t)}\left(-(\alpha_{i}+\gamma_{i})+(\rho_{i}v_{i}(t))\right)\nonumber 
\end{align}
for $i=1,2$. Note that the equations for each country are in the
exactly the same form, and defined by exactly the same variables and
constants, but they differ in value depending on the country in question. 

Concerning assumption \textbf{(A8)}, a utility function is defined,
which shows the preferences of the representative workers in each
country as indifference curves. Figure 16 graphically represents this for country
one, though country two is exactly the same. Consumers in each country
can buy two different goods $x_{1}$ and $x_{2}$. As stated by \textbf{(A7)},
all wages earned by workers are then spent on some bundle of these
two goods. Mathematically this is described in the equation 
\[
I=p_{1}x_{1}+p_{2}x_{2}.
\]
For a bundle of goods to be purchased, it must lie on or inside of
the budget constraint. Further, the indifference curves represent
some combination of $x_{1}$ and $x_{2}$ such that the utility achieved
from all of those different bundles is the same; \emph{i.e.} they
are indifferent about which bundle they choose. For consumers in country
one, this is represented mathematically as 
\[
U_{1}=x_{1}^{\theta_{1}+\rho}\cdot x_{2}^{(1-\theta_{1})^{-2}}
\]
where $\theta_{1}$ is a parameter strictly greater than zero and
strictly less than one. The symbol $\rho$ is defined as the price
proportion, \emph{i.e. }$\rho=\frac{p_{2}}{p_{1}}$. This utility
function corresponds to a special Cobb-Douglas function which reflects
changes in the price proportion. In other words, built into the utility
function is a \emph{bias} with respect to the goods produced in different
countries. 

\begin{figure}
\begin{centering}
\includegraphics[scale=0.7]{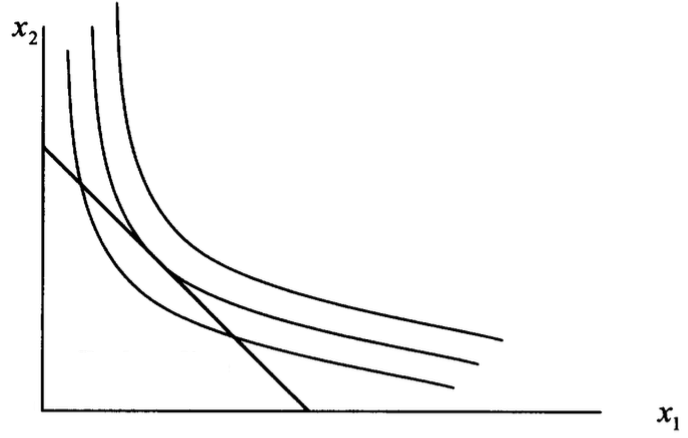}\caption{Indifference Curves and Budget Line \cite{Dummy1}}

\par\end{centering}

\end{figure}

Utility is maximized when the budget line and indifference curve are
tangent to each other. Given the utility function, Ishiyama showed
that the optimal combination for country one on a budget line for
a given price vector is:

\[
(x_{1},x_{2})=\left(\frac{(\theta_{1}p_{1}+p_{2})a_{1}u_{1}(t)v_{1}(t)N_{1}}{p_{1}(p_{1}+p_{2})},\frac{(1-\theta_{1})p_{1}a_{1}u_{1}(t)v_{1}(t)N_{1}}{p_{2}(p_{1}+p_{2})}\right).
\]
This is the demand function for a given price vector. Note that country
one consumes domestic goods by $\theta_{1}$ of its income.

Similarly, country two's utility curve can be defined as follows:
\[
U_{2}=x_{1}^{(1-\theta_{2}){}^{2}}\cdot x_{2}^{\theta_{2}+\rho^{-2}}
\]
 and the optimal combination of goods for a given price vector is:
\[
(x_{1},x_{2})=\left(\frac{(1-\theta_{2})p_{2}a_{2}u_{2}(t)v_{2}(t)N_{2}}{p_{1}(p_{1}+p_{2})},\frac{(\theta_{1}+\theta_{2}p_{2})a_{2}u_{2}(t)v_{2}(t)N_{2}}{p_{2}(p_{1}+p_{2})}\right).
\]
These show the preference and consumption behavior of country two. 

\begin{figure}

\begin{centering}
\includegraphics[scale=0.7]{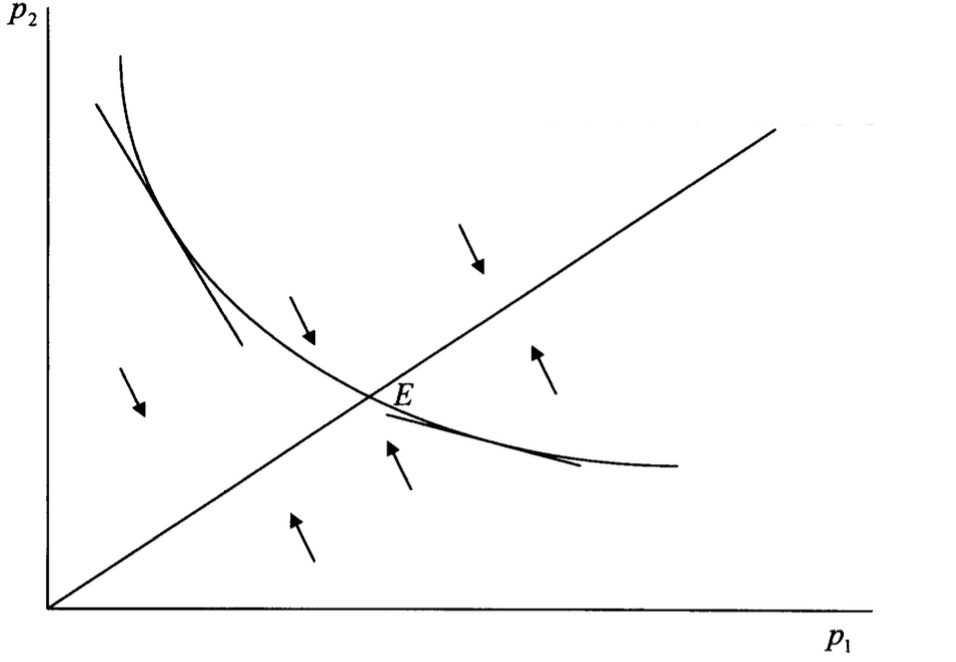}\caption{Equilibrium of Prices \cite{Dummy1}}

\par\end{centering}

\end{figure}

Given these two demand curves, we can write down equations to describe
price adjustments in each country, where the excess demand functions
are on the right-hand side of the equation:
\begin{align}
\dot{p}_{1}= & (1-\theta_{2})p_{2}a_{2}u_{2}(t)v_{2}(t)N_{2}-(1-\theta_{1})p_{1}a_{1}u_{1}(t)v_{1}(t)N_{1}\label{eq:6.2}\\
\dot{p}_{2}= & (1-\theta_{1})p_{1}a_{1}u_{1}(t)v_{1}(t)N_{1}-(1-\theta_{2})p_{2}a_{2}u_{2}(t)v_{2}(t)N_{2}.\nonumber 
\end{align}

\begin{prop}
The Short Run Equilibrium (i.e. balance of trade) price proportion
$\rho^{*}=\frac{(1-\theta_{1})a_{1}u_{1}(t)v_{1}(t)N_{1}}{(1-\theta_{2})a_{2}u_{2}(t)v_{2}(t)N_{2}}$
is stable. \end{prop}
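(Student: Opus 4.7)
The plan is to treat the label ``short run'' literally: on the fast price‐adjustment timescale governed by system (6.2) the slow variables $u_i, v_i, N_i, a_i, \theta_i$ are frozen, so I can view the right-hand sides as functions of $(p_1,p_2)$ alone with positive constant coefficients. Because $\rho^*$ is defined only via prices, I will reduce the two-dimensional price system to a one-dimensional ODE in the ratio $\rho = p_2/p_1$ and then analyze stability of the scalar equilibrium.

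First I would observe the conservation law hidden in (6.2): adding the two equations gives $\dot{p}_1 + \dot{p}_2 = 0$, so the sum $p_1+p_2$ is a first integral (in the sense of Definition 3.1). This collapses the effective state to one dimension. Then, setting $A = (1-\theta_2)a_2 u_2 v_2 N_2$ and $B = (1-\theta_1)a_1 u_1 v_1 N_1$, equation (6.2) becomes $\dot{p}_1 = p_1(A\rho - B)$ with $\dot{p}_2 = -\dot{p}_1$. Applying the quotient rule and substituting $\dot{p}_2 = -\dot{p}_1$ gives
\begin{equation*}
\dot{\rho} \;=\; \frac{\dot{p}_2 p_1 - p_2\dot{p}_1}{p_1^{2}} \;=\; -\frac{(1+\rho)\,\dot{p}_1}{p_1} \;=\; (1+\rho)\bigl(B - A\rho\bigr).
\end{equation*}

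Next I would identify the equilibrium: $\dot{\rho}=0$ on the economically meaningful region $\rho>0$ forces $\rho = B/A$, which is exactly $\rho^{*}$ as stated. For stability, I would argue by direct sign analysis rather than invoke the Jacobian machinery, since the one-dimensional form makes it cleaner. Since $A,B>0$ and $\rho>0$, the factor $(1+\rho)$ is strictly positive, so $\mathrm{sgn}(\dot{\rho}) = \mathrm{sgn}(B - A\rho) = -\mathrm{sgn}(\rho - \rho^{*})$. Hence $\rho$ is driven monotonically toward $\rho^{*}$ from either side, which is exactly asymptotic stability in the sense of Definition 2.6 (and in fact global stability on $(0,\infty)$ in the sense of Definition 2.7). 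Equivalently, differentiating gives $\left.\partial\dot\rho/\partial\rho\right|_{\rho^*} = -(A+B) < 0$, confirming the local (Lyapunov) version.

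The main obstacle is conceptual rather than computational: one must justify reading (6.2) as an autonomous system in $(p_1,p_2)$ alone, i.e.\ justify the ``short run'' separation of time scales so that $u_i(t), v_i(t)$ can be held fixed while the price ratio equilibrates. Once that modeling choice is granted, the conservation law $\dot p_1+\dot p_2=0$ does essentially all of the work by collapsing the system to a scalar ODE whose stability is read off directly from the sign of $B-A\rho$. A minor subtlety is ruling out the spurious root $\rho=-1$ of $\dot\rho$, which is excluded simply because prices are positive.
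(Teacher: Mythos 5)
Your proof is correct, but it takes a genuinely different route from the paper's. The paper writes (6.2) as a linear system $\dot{\boldsymbol{p}}=A\boldsymbol{p}$ in $(p_{1},p_{2})$ and computes the eigenvalues of the coefficient matrix, finding $0$ and $-[(1-\theta_{1})a_{1}u_{1}v_{1}N_{1}+(1-\theta_{2})a_{2}u_{2}v_{2}N_{2}]$, and concludes stability from the sign of the nonzero eigenvalue. You instead exploit the conservation law $\dot{p}_{1}+\dot{p}_{2}=0$ to reduce to the scalar ODE $\dot{\rho}=(1+\rho)(B-A\rho)$ for the ratio $\rho=p_{2}/p_{1}$ and read off stability from a sign analysis. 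Your reduction buys two things the paper's argument does not make explicit: first, it addresses the object the proposition actually names --- the price \emph{proportion} --- rather than the price vector, whose zero eigenvalue means the vector itself is only neutrally stable along the equilibrium ray (the limit point on the ray being fixed by the conserved sum $p_{1}+p_{2}$); second, it yields global asymptotic stability of $\rho^{*}$ on $(0,\infty)$ by monotonicity, not merely a local linearized statement. Your linearized rate $-(A+B)$ at $\rho^{*}$ recovers exactly the paper's nonzero eigenvalue, which is a nice consistency check. Both arguments share the same implicit modeling step of freezing $u_{i},v_{i},N_{i},a_{i}$ on the fast price timescale; the paper does this silently when treating the matrix entries as constants, and you are right to flag it as the one conceptual point requiring justification.
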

\begin{proof}
To show the stability of the price proportion, consider system (6.2)
rewritten in the form $\boldsymbol{\dot{p}}=A\cdot\boldsymbol{p}$:
\[
\left[\begin{array}{c}
\dot{p_{1}}\\
\dot{p}_{2}
\end{array}\right]=\left[\begin{array}{cc}
-(1-\theta_{1})a_{1}u_{1}(t)v_{1}(t)N_{1} & (1-\theta_{2})a_{2}u_{2}(t)v_{2}(t)N_{2}\\
(1-\theta_{1})a_{1}u_{1}(t)v_{1}(t)N_{1} & -(1-\theta_{2})a_{2}u_{2}(t)v_{2}(t)N_{2}
\end{array}\right]\left[\begin{array}{c}
p_{1}\\
p_{2}
\end{array}\right]
\]
and $A$'s eigenvalues, which are found by evaluating:
\begin{align*}
det(\lambda I-A)= & det\left[\begin{array}{cc}
-\lambda-(1-\theta_{1})a_{1}u_{1}(t)v_{1}(t)N_{1} & (1-\theta_{2})a_{2}u_{2}(t)v_{2}(t)N_{2}\\
(1-\theta_{1})a_{1}u_{1}(t)v_{1}(t)N_{1} & -\lambda-(1-\theta_{2})a_{2}u_{2}(t)v_{2}(t)N_{2}
\end{array}\right]\\
= & [-\lambda-(1-\theta_{1})a_{1}u_{1}(t)v_{1}(t)N_{1}]\cdot[-\lambda-(1-\theta_{2})a_{2}u_{2}(t)v_{2}(t)N_{2}]\\
 & -[(1-\theta_{2})a_{2}u_{2}(t)v_{2}(t)N_{2})\cdot((1-\theta_{1})a_{1}u_{1}(t)v_{1}(t)N_{1})]\\
= & \lambda^{2}+\lambda[((1-\theta_{1})a_{1}u_{1}(t)v_{1}(t)N_{1})+((1-\theta_{2})a_{2}u_{2}(t)v_{2}(t)N_{2})]\\
= & \lambda^{2}[\lambda+((1-\theta_{1})a_{1}u_{1}(t)v_{1}(t)N_{1})+((1-\theta_{2})a_{2}u_{2}(t)v_{2}(t)N_{2})]
\end{align*}
 The eigenvalues are $0$ and $-[(1-\theta_{1})a_{1}u_{1}(t)v_{1}(t)N_{1})+((1-\theta_{2})a_{2}u_{2}(t)v_{2}(t)N_{2}]$.
Thus the price proportion is stable. 
\end{proof}
Figure 17 shows graphically that the short run equilibrium is stable.
When the parameter $\theta_{1}$ is equal to $\theta_{2}$, the slope
of a local change in the price vector $\boldsymbol{p}$ is expressed
as a tangent to the hyperbolic curve given by 
\[
p_{2}=\frac{p_{1}(0)p_{2}(0)}{p_{1}}
\]
where $(p_{1}(0),p_{2}(0))$ is some initial price vector. The tangent
lines become smaller as the point comes near the equilibrium ray.
This is the line where prices are no longer changing and is given
by:
\[
p_{2}=\frac{a_{1}u_{1}(t)v_{1}(t)N_{1}}{a_{2}u_{2}(t)v_{2}(t)N_{2}}\cdot p_{1}.
\]
Thus if the adjustment is sufficiently flexible, the trade between
two countries at period $t$ is conducted at an equilibrium, or the
positive intersection point of the hyperbolic curve and equilibrium
ray. $E$ is defined to be the point: 
\[
E=\left(\sqrt{\frac{p_{1}(0)p_{2}(0)a_{2}u_{2}(t)v_{2}(t)N_{2}}{a_{1}u_{1}(t)v_{1}(t)N_{1}}},\sqrt{\frac{p_{1}(0)p_{2}(0)a_{1}u_{1}(t)v_{1}(t)N_{1}}{a_{2}u_{2}(t)v_{2}(t)N_{2}}}\right).
\]
 Thus we summarize our two country horizontal trade model with the
following equations:
\begin{align}
\dot{v}_{i}= & v_{i}(t)\left(\frac{1}{\sigma_{i}}-(\alpha_{i}+\beta_{i})-\frac{u_{i}(t)}{\sigma_{i}}\right)\nonumber \\
\dot{u}_{i}= & \frac{u_{i}(t)}{p_{i}(t)}\left(-(\alpha_{i}+\gamma_{i})+(\rho_{i}v_{i}(t))\right)\nonumber \\
\dot{p}_{1}= & \sqrt{\frac{p_{1}(0)p_{2}(0)a_{2}u_{2}(t)v_{2}(t)N_{2}}{a_{1}u_{1}(t)v_{1}(t)N_{1}}}\label{eq:6.3}\\
\dot{p}_{2}= & \sqrt{\frac{p_{1}(0)p_{2}(0)a_{1}u_{1}(t)v_{1}(t)N_{1}}{a_{2}u_{2}(t)v_{2}(t)N_{2}}}\nonumber 
\end{align}
for $i=1,2$. This system includes six sets of state variables. 

We now explore the existence on long-run equilibria of the system.
Recall the equilibrium of the Goodwin model when considering just
one country. The system has two fixed points: the trivial fixed point
at the origin and the non-trivial fixed point. The non-trivial fixed
points are
\begin{align*}
v^{*}= & \frac{\alpha+\gamma}{\rho}\\
u^{*}= & 1-\sigma(\alpha+\beta).
\end{align*}
The Jacobian evaluated at the non-trivial fixed points is
\[
\boldsymbol{J}=\left(\begin{array}{cc}
0 & -\frac{\alpha+\gamma}{\sigma\rho}\\
\rho(1-\sigma(\alpha+\beta) & 0
\end{array}\right).
\]
Notice that the equations describing a country's dynamics in this
section are slightly modified from the original Goodwin model. The
change is small, but it is present in the equation: 
\[
\dot{u}_{i}=\frac{u_{i}(t)}{p_{i}(t)}\left(-(\alpha_{i}+\gamma_{i})+(\rho_{i}v_{i}(t))\right).
\]
The function is updated by dividing $u_{i}$ by $p_{i}$. However,
this doesn't change the equilibrium point $v^{*}$. For the two country
model, since each county exhibits its own unique Goodwin cycle, the
non-trivial fixed points are
\begin{align*}
v_{i}^{*}= & \frac{\alpha_{i}+\gamma_{i}}{\rho_{i}}\\
u_{i}^{*}= & 1-\sigma_{i}(\alpha_{i}+\beta_{i})
\end{align*}
for $i=1,2$, and it has a similar Jacobian matrix. Thus, each Goodwin
model is a conservative dynamical system where every point is in a
closed orbit around the non-trivial fixed points which are asymptotically
stable. Notice that $p_{1}^{*}$ and $p_{2}^{*}$ occurs when the
following are true: first, prices are in equilibrium \emph{i.e. }we
are somewhere along the equilibrium ray and when both $u_{i}$ and
$v_{i}$ are in equilibrium. Thus, we get 
\begin{align*}
p_{1}^{*}= & \sqrt{\frac{p_{1}(0)p_{2}(0)a_{2}u_{2}^{*}v_{2}^{*}N_{2}}{a_{1}u_{1}^{*}v_{1}^{*}N_{1}}}\\
p_{2}^{*}= & \sqrt{\frac{p_{1}(0)p_{2}(0)a_{1}u_{1}^{*}v_{1}^{*}N_{1}}{a_{2}u_{2}^{*}v_{2}^{*}N_{2}}}.
\end{align*}

From the analysis above, it is shown that the two country Goodwin
model describing horizontal international trade has a meaningful equilibrium
point for real variables. Next, we will consider the mechanism of
dynamics for the system. Recall the behavior of worker's share and
employment rate when considering a single Goodwin model, described
in Figure 10. When labor share of national income ($u$) of the country
$i$ is greater than $u_{i}^{*}$ and the employment ratio ($v$)
is \emph{also }greater than $v_{i}^{*}$, the main economic variables
considered in the Goodwin model all fall down because the pressure
put on capitalist's profits weakens investment activities. The
wage continues to rise until $v_{i}$ becomes less than or equal to
$v_{i}^{*}$. 

\begin{figure}
\centering{}\includegraphics[scale=0.55]{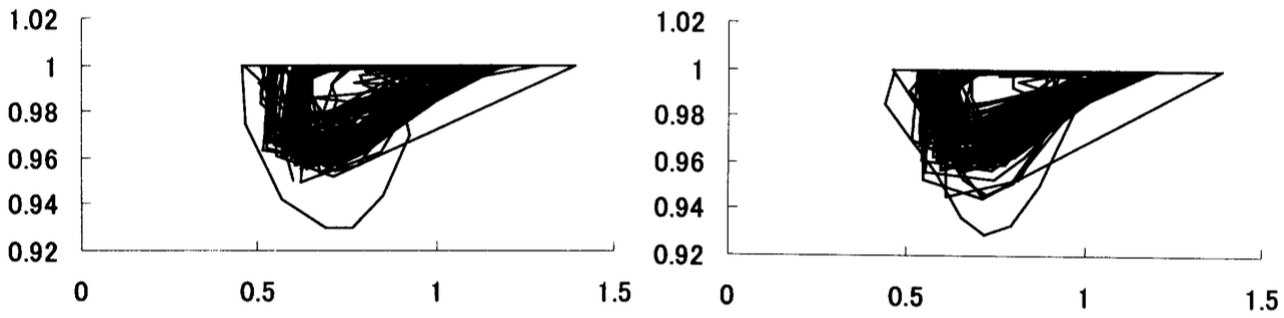}\caption{Country one (left) with trade and Country two (right) with trade \cite{Dummy1}}
\end{figure}

However, when considering horizontal trade, as our two country model
does, the behavior of labor share in country one, $u_{1}$, is unclear.
In the case of our system, $u_{1}$, depends on many more variables
than just $v_{1}$, as it did in the original Goodwin. This
fact will be made more clear in the next section where we sheafify
the two country Goodwin model and these dependencies can be seen graphically.
Further, the nominal capital stock also changes by the influence of
one country's behavior. For example, if foreign workers demand domestic
products such that the rise of its price can be offset by the growth
of money, the domestic labor share falls. Thus we expect the trajectories
generated by our dynamic system to exhibit much more complex behavior
than that of the Goodwin model \emph{without }horizontal trade. 

\begin{figure}
\centering{}\includegraphics[scale=0.5]{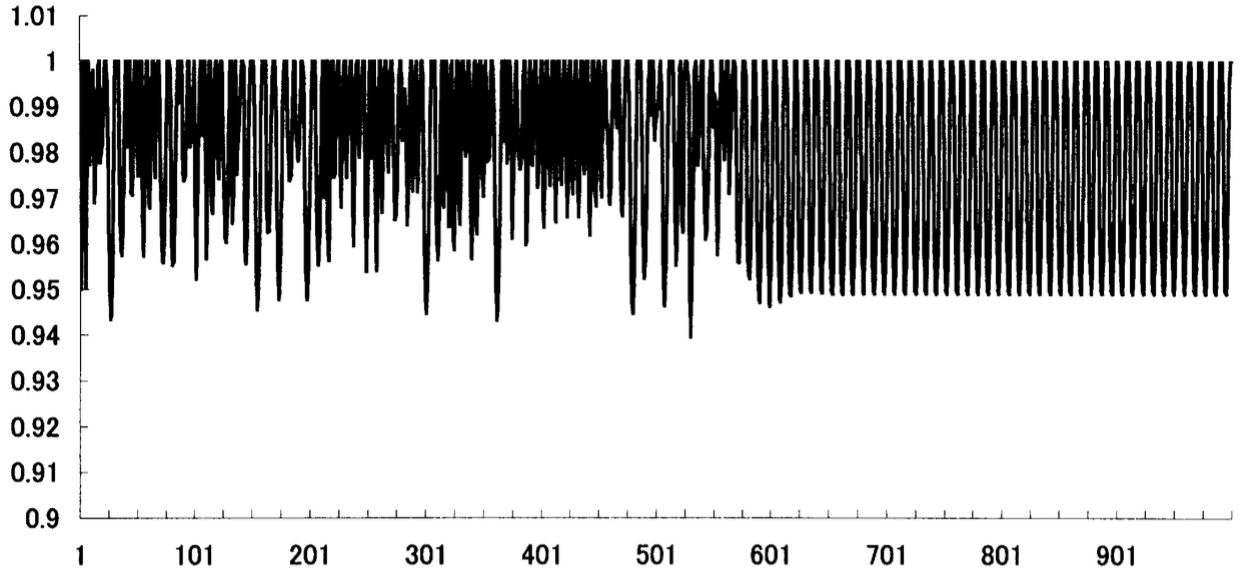}\caption{Country one's Dynamics - $\sigma_{2}=2.6$ \cite{Dummy1}}
\end{figure}

\begin{figure}
\centering{}\includegraphics[scale=0.25]{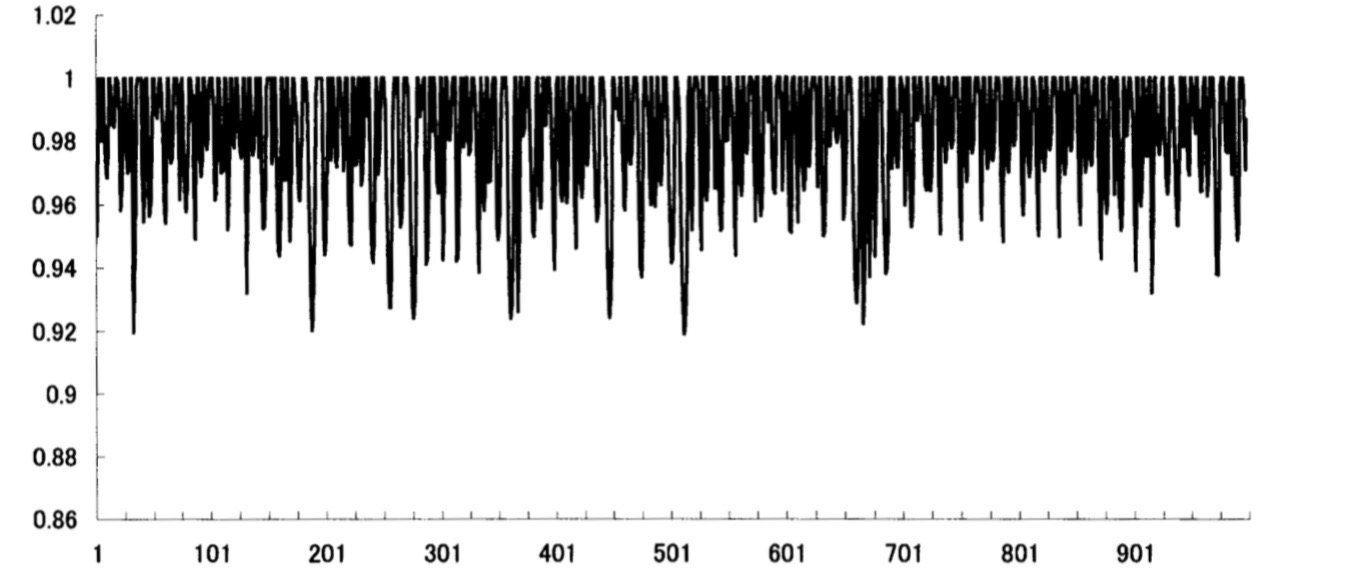}\caption{Country one's Dynamics - $\sigma_{2}=4.7$ \cite{Dummy1}}
\end{figure}

Ishiyama looks to confirm these conjectures through computer simulations.
There are two different cases that are considered. First, the capital-output
ratios between the two countries are equal, \emph{i.e. $\sigma=\sigma_{1}=\sigma_{2}$};
second, the more general case of capital-output ratios not equal to
each other is considered. To examine the relationship between the
lack and presence of horizontal trade, note that when horizontal trade
is not present, we are presented with the original Goodwin model,
which generates closed orbit periodic solutions, which is expected
of a conservative dynamical system. Next, the dynamics between the
two countries is considered when horizontal trade is introduced \emph{and
}capital-output ratios are equal, and is represented in Figure 18.
Country one is on the left side and country two is on the right. Note,
the horizontal axis represents labor's share of national income, while
the vertical axis represents the employment ratio. 

\begin{table}
\centering{}%
\begin{tabular}{|c|c|}
\hline 
System Dynamics & $\sigma_{2}$\tabularnewline
\hline 
\hline 
Chaotic & 2.0\tabularnewline
\hline 
Limit Cycle & 2.2\tabularnewline
\hline 
Chaotic & 2.3-2.5\tabularnewline
\hline 
Limit Cycle & 2.6\tabularnewline
\hline 
Chaotic & 2.7-3.1\tabularnewline
\hline 
Chaotic & 3.2\tabularnewline
\hline 
Limit Cycle & 3.3-3.6\tabularnewline
\hline 
Chaotic & 3.7, 3.8\tabularnewline
\hline 
Limit Cycle & 3.9-4.1\tabularnewline
\hline 
Chaotic & 4.2-4.7\tabularnewline
\hline 
Limit Cycle & 4.8\tabularnewline
\hline 
\end{tabular}\caption{Dynamics of System with varying $\sigma_{2}$'s \cite{Dummy1}}
\end{table}

Next, the more general case is considered, where capital output ratios
are not equal. Different values are assigned to each country, and
the fluctuations within country one are considered. First,
$\sigma_{2}=2.6$ is considered. Figure 19 shows that over time, country
one's economy reaches a limit cycle, in spite of some initial chaotic
behavior. On the other hand, when $\sigma_{2}=4.7$ is considered,
chaotic behavior appears to persist throughout, which is represented
in Figure 20. Moreover, Ishiyama conducted many different simulations
using various values for $\sigma_{2}$ while keeping $\sigma_{1}$
constant. The properties of fluctuations are classified in Table 1.

\subsection{Sheafifying The Two Country Goodwin Model}

The sheaf theoretic concepts that were introduced in the last section
are interesting from a purely mathematical standpoint. But, their
applications to complex modeling make them a great asset to build
and expand models in a systematic way. They also present a graphical
understanding of how variables relate to each other with varying degrees
of dependency graphs. Further, the notion of being able to ``stitch
together'' a number of simple sub-models to form a more complex multi-model
via sheaf theory has far reaching applications, such as the extended
Goodwin model just introduced. 

Recall the sheaf diagram made for the Goodwin model in Figure 13.
Using that construction process as a model, we wish to build out a
sheaf diagram for the extended Goodwin model using the same process.
Our expectation, given the construction of the two country system,
is that our sheaf diagram will in some way be made up of \emph{two
}original Goodwin models and they will be interlinked in some way.
The two separate original Goodwin models each represent their respective
country. Further, this graphical construction lend some intuition
as to how to extend this to beyond two countries.

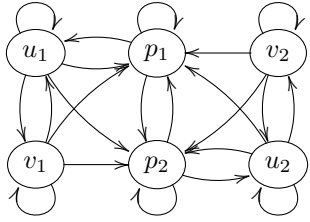
\begin{figure}
\[
\xymatrix{*++[o][F]{u_{1}}\ar@/_{.5pc}/[d]\ar@(lu,ru)\ar@/_{.5pc}/[dr]\ar@/_{.5pc}/[r] & *++[o][F]{p_{1}}\ar@/_{.5pc}/[d]\ar@(lu,ru)\ar@/_{.5pc}/[l] & *++[o][F]{v_{2}}\ar@/_{.5pc}/[d]\ar@(lu,ru)\ar@/^{.5pc}/[dl]\ar[l]\\*
++[o][F]{v_{1}}\ar@/_{.5pc}/[u]\ar@(rd,ld)\ar@/^{.5pc}/[ur]\ar[r] & *++[o][F]{p_{2}}\ar@/_{.5pc}/[u]\ar@(rd,ld)\ar@/_{.5pc}/[r] & *++[o][F]{u_{2}}\ar@/_{.5pc}/[u]\ar@(rd,ld)\ar@/_{.5pc}/[ul]\ar@/_{.5pc}/[l]
}
\]
\caption{Dependency Graph of 2 Countries}

\end{figure}

\begin{figure}

\centering{}\includegraphics[scale=0.5]{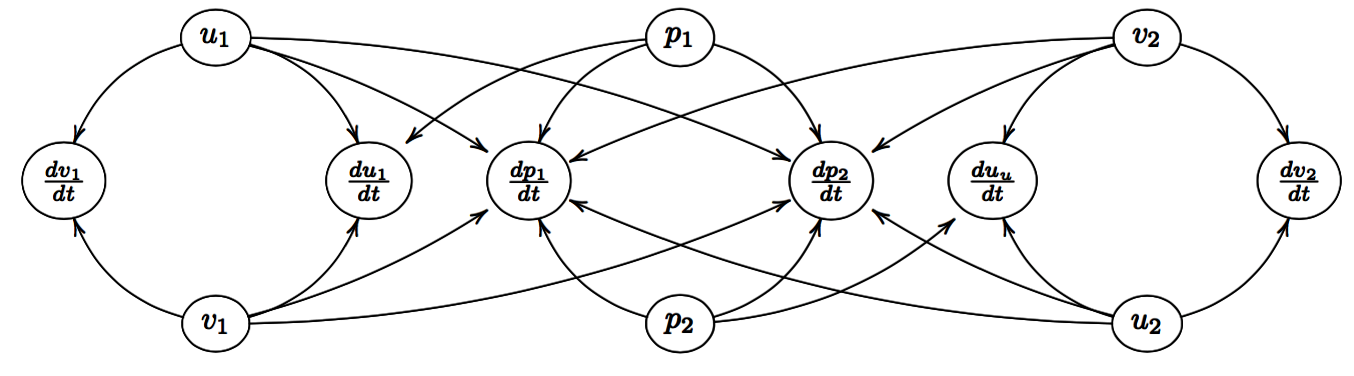}\caption{Expanded Dependency Graph of 2 Countries}
\end{figure}

Given our system of equations (6.3) for our extended Goodwin model,
we can systematically construct the related dependency diagrams and
sheaves. The easiest way to gain an understanding for how the solutions
of (6.3) behave is to build out the first dependency diagram, or the
visual representation of the causal relationships between the variables,
seen in Figure 21. Recall that an arrow from one variable to the next
- \emph{i.e. $u_{1}\rightarrow v_{1}$ }- states that the future value
of $v_{1}$ is partially dependent on the current value of $u_{1}$.
From this first dependency diagram, we begin to see the general structure
of the system. It is easy to isolate the two individual countries
and the variables that interlink them - \emph{i.e. }$p_{1}$ and $p_{2}$.
Taking this dependency diagram one step further to encode more information
- the fact that derivatives of these functions are state variables
- we end up with Figure 22. While more information has been encoded,
it still fails to fully describe the relationship between the derivative
and its state variable, since, for example, both $u_{1}$ and $v_{1}$
determine the value of $\frac{dv_{1}}{dt}$. This is fully remedied
by reinterpreting the arrows as functional relationships between variables
and considering the pair of variables to determine the functional
dependence between variables and their derivatives. After this reinterpretation,
we are left with Figures 23 and 27. 

\begin{figure}
\centering{}\includegraphics[scale=0.4]{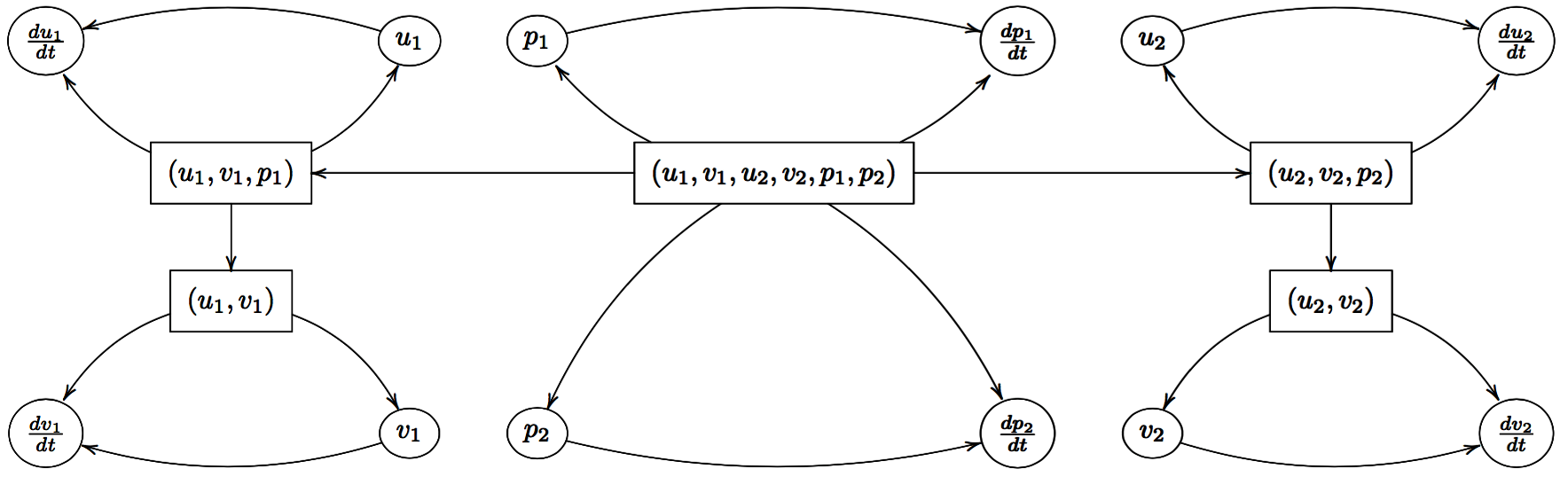}\caption{Functional Dependency Graph of 2 Countries - Variable Names}
\end{figure}

\begin{figure}
\centering{}\includegraphics[scale=0.4]{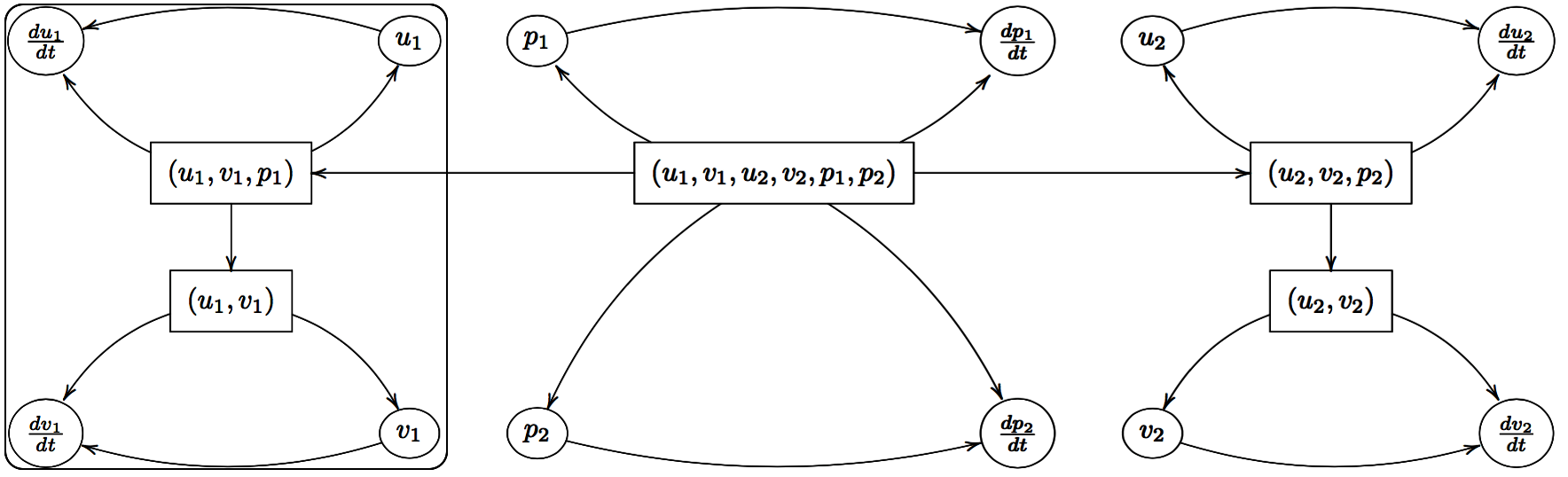}\caption{Country one Sub-Diagram Embedded Within 2 Country Sheaf Diagram}
\end{figure}
Through this process, we have successfully sheafified the Goodwin
Model of two countries engaged in horizontal trade. There are a few
things to note about the diagram. First, each country does not exactly
represent the original Goodwin model, since the new country specific
model introduced by Ishiyama contains a price variable. That being
said, as expected, the diagram can be broken down into three separate
sub-diagrams. On the left, we notice the sub-diagram representing
country one's Goodwin model. Similarly on the right, we notice the
sub-diagram corresponding to county two's Goodwin model. The sub-diagram
in the middle corresponds to the interaction between the two countries
via horizontal trade. These are illustrated in Figures 24, 25, and
26; The boxes outline the sub-sheaf diagrams present within the overall
sheaf. This lends a more intuitive understanding of how the two country
system is constructed; each of these sub-diagrams are ``stitched''
together yielding the larger diagram.

\begin{figure}
\begin{centering}
\includegraphics[scale=0.4]{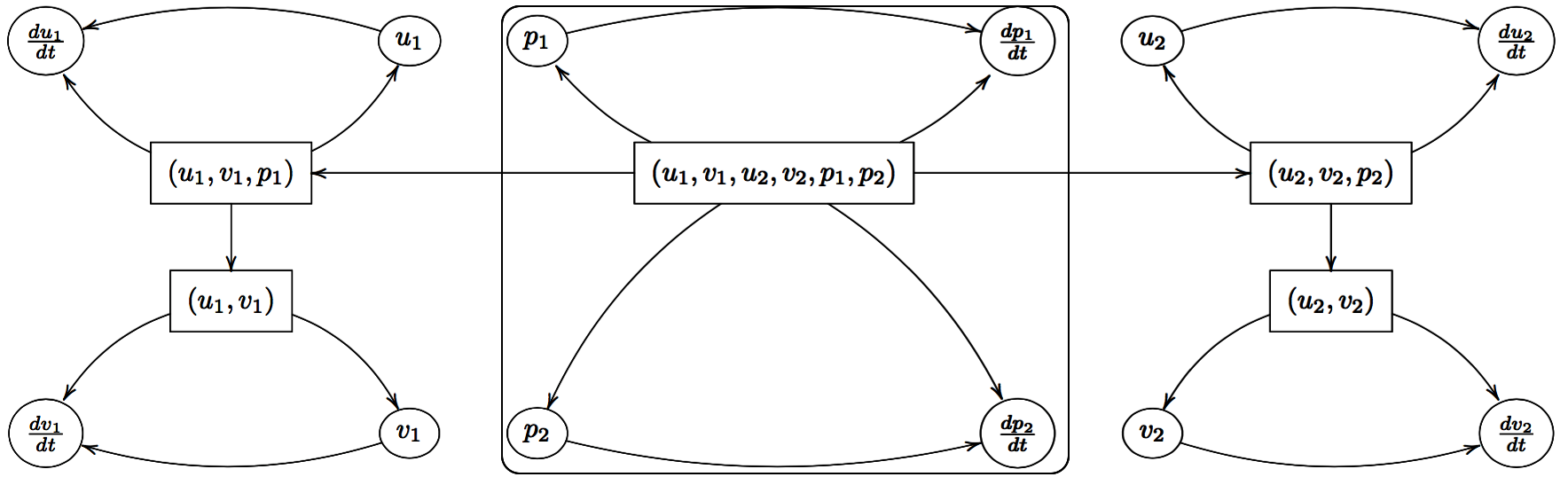}\caption{Price Sub-Diagram Embedded Within 2 Country Sheaf Diagram}

\par\end{centering}

\end{figure}

\begin{figure}
\centering{}\includegraphics[scale=0.4]{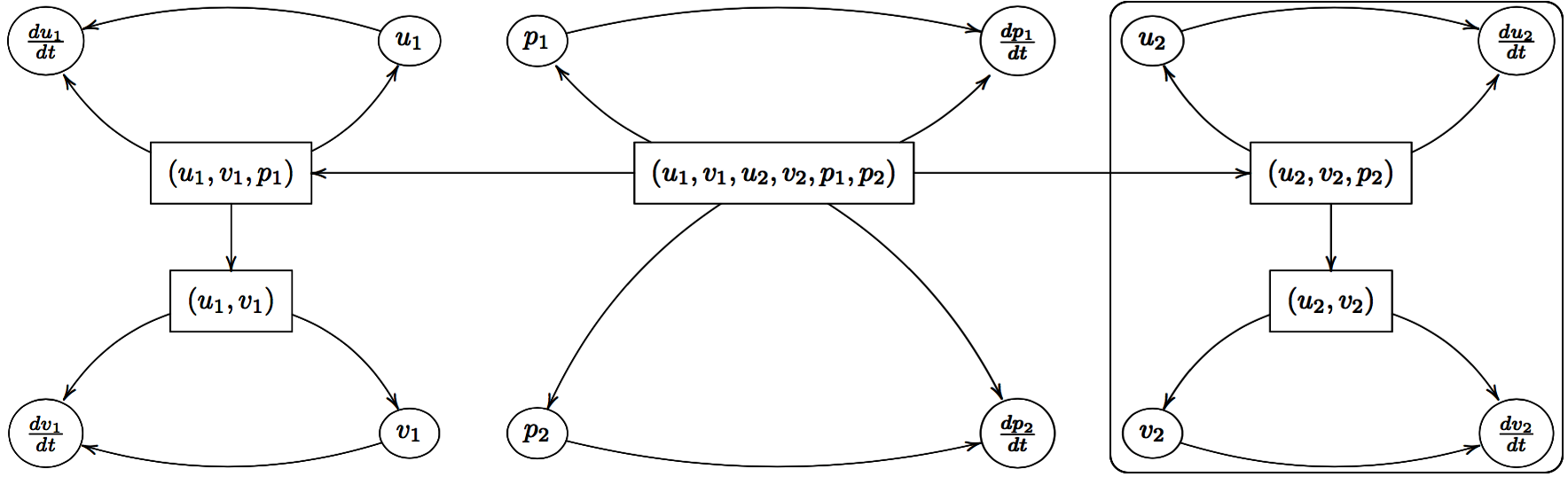}\caption{Country two Sub-Diagram Embedded Within 2 Country Sheaf Diagram}
 
\end{figure}
It is clear from its construction that the sheaf is quite symmetric.
Actually, it is important to point out that Figure 23 and 27 aren't
quite sheaves yet since the diagram most likely does not commute -
one of the requirement for a diagram to be a sheaf. Nevertheless,
it is simple enough to consider the the largest sub-diagram that is
a sheaf. This is done by considering the solution spaces on the state
spaces, \emph{i.e. $S_{1}\subset C^{1}(\mathbb{R,R}^{3})$, $S_{2}\subset C^{1}(\mathbb{R,R}^{6})$}
and $S_{3}\subset C^{1}(\mathbb{R,R}^{3})$. Here, $S_{1}$ and $S_{3}$
represent the set of solutions satisfying the system of equations
describing country one and two respectively. $S_{2}$ is the set
of solutions satisfying the set of functions which describes the price-trading
scheme within our model. We can formally define $S_{1}$, $S_{2}$,
$S_{3}$ in the following way:
\begin{itemize}
\item $S_{1}=\left\{ (u_{1},v_{1},p_{1}:\begin{array}{c}
\dot{v}_{1}=v_{1}(t)\left(\frac{1}{\sigma_{1}}-(\alpha_{1}+\beta_{1})-\frac{u_{1}(t)}{\sigma_{1}}\right)\\
\dot{u}_{1}=\frac{u_{1}(t)}{p_{1}(t)}\left(-(\alpha_{1}+\gamma_{1})+(\rho_{1}v_{1}(t))\right)
\end{array}\right\} $
\item $S_{2}=\left\{ (u_{1},v_{1},p_{1},u_{2},v_{2},p_{2}):\begin{array}{c}
\dot{p}_{1}=\sqrt{\frac{p_{1}(0)p_{2}(0)a_{2}u_{2}(t)v_{2}(t)N_{2}}{a_{1}u_{1}(t)v_{1}(t)N_{1}}}\\
\dot{p}_{2}=\sqrt{\frac{p_{1}(0)p_{2}(0)a_{1}u_{1}(t)v_{1}(t)N_{1}}{a_{2}u_{2}(t)v_{2}(t)N_{2}}}
\end{array}\right\} $
\item $S_{3}=\left\{ (u_{2},v_{2},p_{2}):\begin{array}{c}
\dot{v}_{2}=v_{2}(t)\left(\frac{1}{\sigma_{2}}-(\alpha_{2}+\beta_{2})-\frac{u_{2}(t)}{\sigma_{2}}\right)\\
\dot{u}_{2}=\frac{u_{2}(t)}{p_{2}(t)}\left(-(\alpha_{2}+\gamma_{2})+(\rho_{2}v_{2}(t))\right)
\end{array}\right\} $
\end{itemize}
With this consideration, the diagram now is path independent and thus
we are left with a sheaf. 

\begin{figure}
\centering{}\includegraphics[angle=270,scale=0.5]{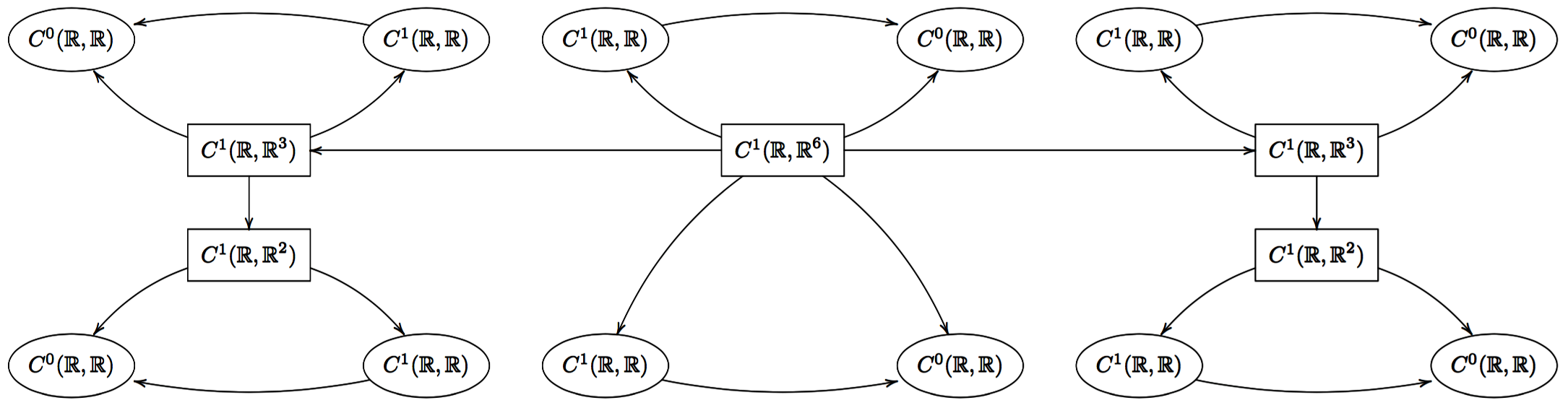}\caption{Functional Dependency Graph of 2 Countries - Variable Spaces}
\end{figure}

\subsection{Sheaf theoretic Analysis}

The dependency graphs presented above are a good introduction to the
power that sheaves have when studying systems of differential equations.
These dependency graphs lend an intuition to how the system is structured
and how variables interact with each other. Sheaf theory goes beyond
simply creating these dependency graphs, though. We turn to asking
questions about properties of the sheaf, such as global and local
sections, and how far can local sections be extended throughout the
system.

One of the global sections for this sheaf is quite easy to compute.
Recall Proposition 5.2 which states the sections of a sheaf are in
one-to-one correspondence with the simultaneous solution of its system
of equations. Thus, then, one global section of our sheaf is simply
the equilibrium of our system
\begin{align*}
v_{i}^{*}= & \frac{\alpha_{i}+\gamma_{i}}{\rho_{i}}\\
u_{i}^{*}= & \frac{1}{\sigma_{i}}-(\alpha_{i}+\beta_{i})\\
p_{1}^{*}= & \sqrt{\frac{p_{1}(0)p_{2}(0)a_{2}u_{2}^{*}v_{2}^{*}N_{2}}{a_{1}u_{1}^{*}v_{1}^{*}N_{1}}}\\
p_{2}^{*}= & \sqrt{\frac{p_{1}(0)p_{2}(0)a_{1}u_{1}^{*}v_{1}^{*}N_{1}}{a_{2}u_{2}^{*}v_{2}^{*}N_{2}}}.
\end{align*}
This is effectively equivalent to setting each of the leaves (of the sheaf) corresponding
to a derivative equal to zero, as done in Figure 28. There are many
other global sections within our sheaf that are not the equilibrium
points; however, the only global sections that correspond setting
the leaves equal to zero are the equilibrium points. 

\begin{figure}

\begin{centering}
\includegraphics[scale=0.4]{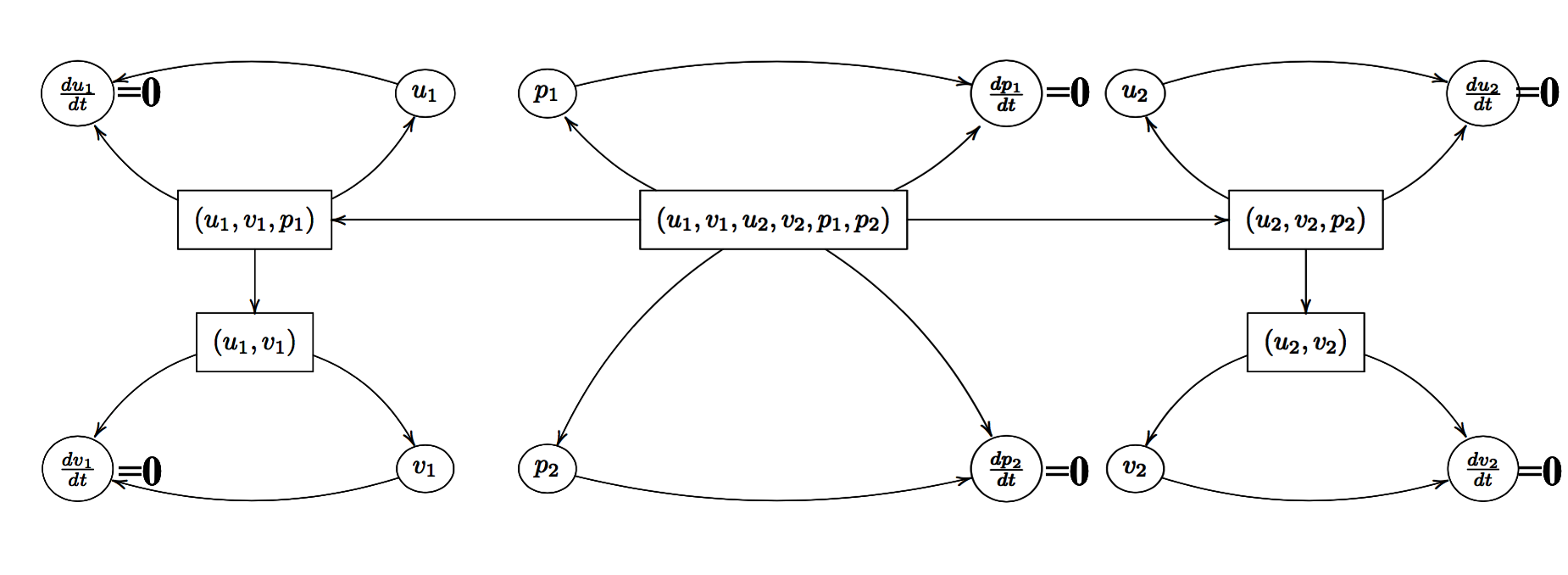}\caption{A Global Section of Extended Goodwin Model}

\par\end{centering}

\end{figure}

Recall that there is exactly \emph{one} equilibrium for a single country
Goodwin model and that the equation for a country's Goodwin model
are slightly different from the original Goodwin model; however, the
equilibrium points and equilibrium analysis remain unchanged. Since
the model for two countries engaged in horizontal trade includes two
single country models as slightly modified Goodwin models that still
poses the same equilibrium point, this means that each country's equilibrium
$(u_{i},v_{i})$ is determined from the outset. Further, the pricing
structure introduced by Ishiyama has these two countries $(u_{1},v_{1})$,
$(u_{2},v_{2})$ always uniquely determine commodity prices. Thus
there is exactly one equilibrium. Always. Recall also that the Goodwin
model for a single country is a conservative system, where level curves
form around the equilibrium point $(u_{i}^{*},v_{i}^{*})$. While
the countries engaged in trade at the equilibria do not change, Ishiyama
shows via computer simulation the dynamics are not quite as straightforward.
In fact, they are regularly chaotic. 

The questions of equilibria for the extended Goodwin model, while
vitally important from a dynamical systems perspective, are not the
interesting questions that can be asked from a sheaf theoretic perspective.
Since there is only one equilibrium point, it is unlikely that the
system will ever be in equilibrium, nor will it converge to an equilibrium
point, not much more can be said for this global section. More interesting
questions, both mathematically and economically - from a theoretical
and policy standpoint - are that of local sections, and their extensions.
For example, what happens if we decide to fix $(u_{1},v_{1})$? That
is, assert that these values in country one are known. Given this,
can we say anything that is happening in country two? Are there any
constraints that country one forces upon country two via the equations
in our system? If we have a local section at $(u_{1},v_{1})$, how
far can we extend this section? To the price-trading scheme space?
To $(u_{2},v_{2})$ or to the entire system? Does this allow us to
inferences about whether the Goodwin model has broken down in country
two? 

We now turn to a local section analysis on our sheaf. The main questions
we will be asking are: 
\begin{itemize}
\item How far can a local section be extended? 
\item What constraints does it have on the other variables? 
\item For some given choices, will all variables remain free, or will some
variables be required to be determined -\emph{ i.e.} not free - based
on how these local sections are chosen?
\end{itemize}
We will consider fixing $(u_{1},v_{1})$ and observing its effects
on $(u_{2},v_{2})$. The process of extending this local section amounts
to something of a ``diagram-chase,'' whereby we ``chase'' the
local section across the sheaf and thus see how far it can be extended. 

We will first start by noting the number of degrees of freedom in
each sub-diagram. In the sub-diagrams describing country one and country
two respectively, there are three degrees of freedom present. Each
sub-system has two equations and five variables, which can be seen
from the equations describing each countries individual dynamics:
\begin{align*}
\dot{v_{i}}= & v_{i}(t)\left(\frac{1}{\sigma_{i}}-(\alpha_{i}+\beta_{i})-\frac{u_{i}(t)}{\sigma_{i}}\right)\\
\dot{u}_{i}= & \frac{u_{i}(t)}{p_{i}(t)}\left(-(\alpha_{i}+\gamma_{i})+(\rho_{i}v_{i}(t))\right).
\end{align*}
Hence, we can freely pick three out of the five variables of the set
$\{u_{i},\dot{u}_{i},v_{i},\dot{v}_{i},p_{i}\}$; after this selection,
the final two will be fully dependent on the three picked. In our
sub-diagram describing the trading scheme, six degrees of freedom
are present. Again, there are two equations and eight variables present;
after picking six variables freely, the final two variables not picked
are determined. 

\begin{figure}

\begin{centering}
\includegraphics[scale=0.4]{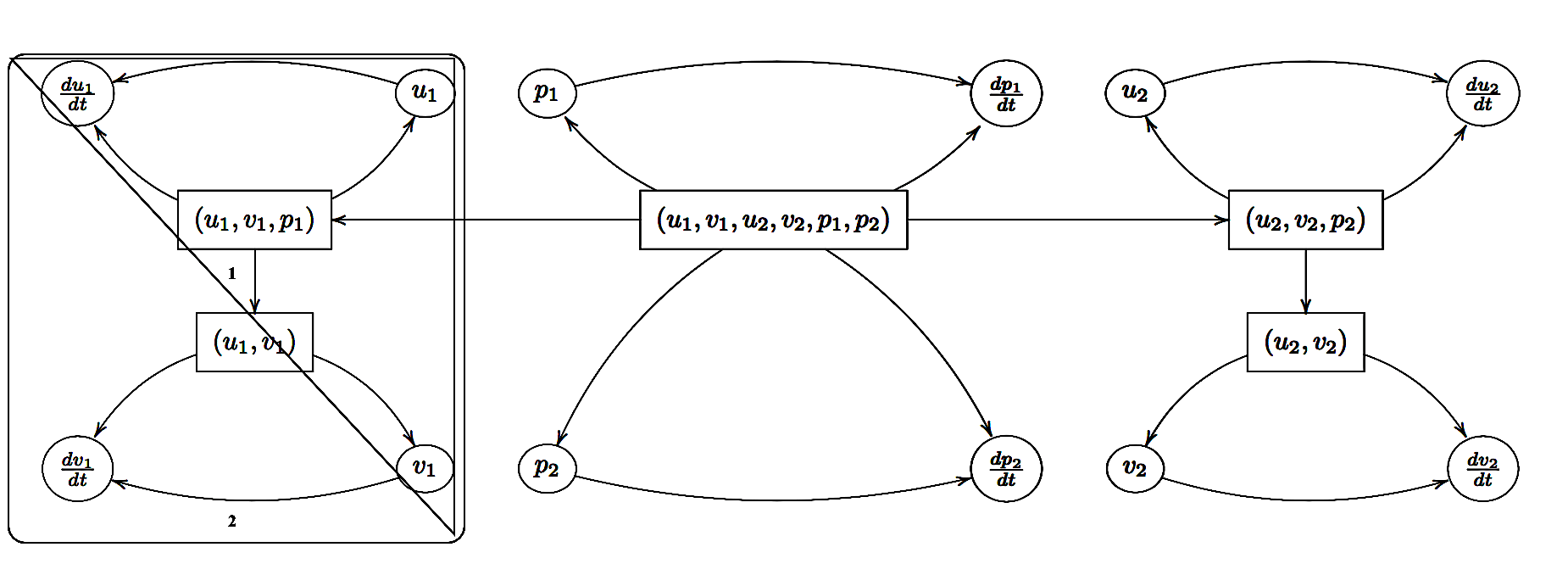}\caption{Extending Local Section of $(u_{1},v_{1})$}

\par\end{centering}

\end{figure}

With this in mind, we will begin by placing a local section on country
one and assert that three of the five variables present within the
system are known. Without loss of generality, we assert that the variables
$u_{1},v_{1},$ and $\dot{u}_{1}$ are known; $\dot{v}_{1}$ and $p_{1}$
are now fixed and determined based on the functional relationship
described above. As Figure 29 shows, we have gone from extending the
local section from a few variables to the entire system within country
one. 

All of country one's variables are completely known, since we either assert
they are known, or they can be determined based on the functional
relationships presented with country one's system. We will now try
to extend to the price-trading system, found in the middle of the
sheaf. Notice that three out of the five variables in country one
are also present in the price-trading system: $u_{1},v_{1},$ and
$p_{1}$. Since these three variables are now known in country one,
they carry over to the price-trading system. The local section has
partially been extended to the price-trading scheme. Recall that there
are six degrees of freedom within this portion of the diagram. Since
we have eliminated three of those degrees of freedom with the three
known variables from country one, we are left with three degrees of
freedom. Thus from our set of free variables $\{\dot{p}_{1},p_{2},\dot{p}_{2},u_{2},v_{2}\}$,
choosing three freely will then determine the remaining two. Without
loss of generality, we assert that $\dot{p}_{1},p_{2},$ and $\dot{p}_{2}$;
$u_{2}$ and $v_{2}$ are now completely determined by their functional
relationships with the other variables. The local section has thus
been fully extended to the price trading scheme \emph{and} partially
extended to country two. All eight variables are ``locked down,''
as shown in Figure 30. 

\begin{figure}
\begin{centering}
\includegraphics[scale=0.4]{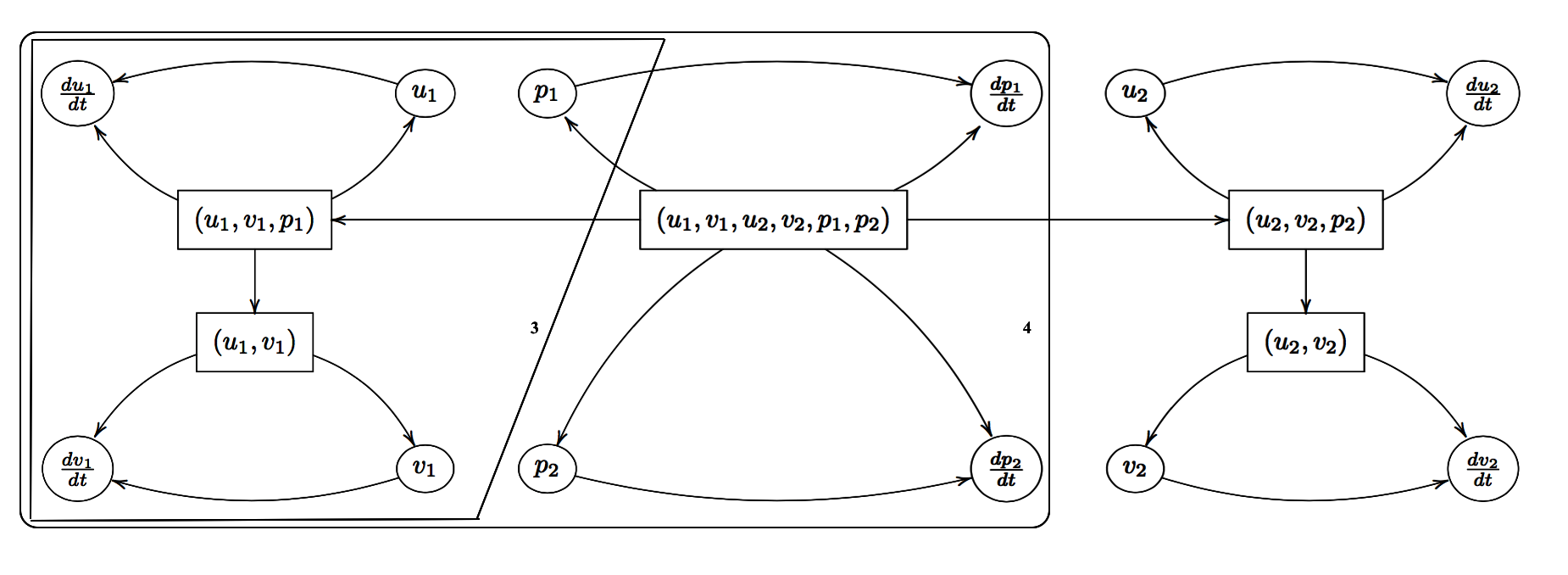}\caption{Extending Local Section of $(u_{1},v_{1})$ to Price-Trading System}

\par\end{centering}

\end{figure}

We turn now to country two to see if the local section can be extended
further. Recall that from the outset, there are three degrees of freedom
present within the country two, for the exact same reason as country
one. Similarly, there is an overlap of three of the variables found
within country two as there are in the price-trading system. As we
move across to country two, these variables which are now determined
also move across. This is the process of extending the local section
to part of country two. The three degrees of freedom that were present
within country two are now used up, leaving $\dot{u}_{2}$ and $\dot{v}_{2}$
completely determined. As Figure 31 shows, from this theoretical framework,
the local section has been extended across the entire system. 

\begin{figure}
\begin{centering}
\includegraphics[scale=0.4]{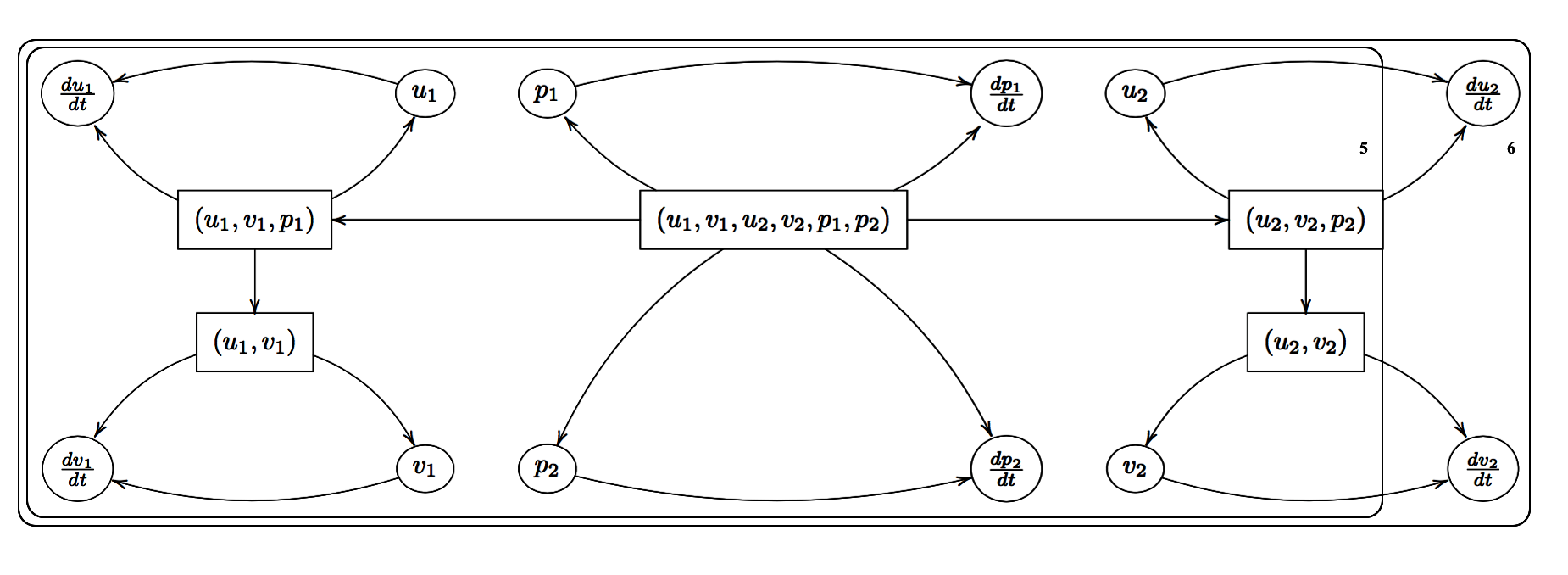}\caption{Extending Local Section of $(u_{1},v_{1})$ to Country two}

\par\end{centering}

\end{figure}

This analysis by employing sheaves is powerful, to say the least.
First, it \emph{is} possible for all this analysis to be done without
the aid of a dependency diagram simply using the equations. However,
the sheaf diagram helps represent the system graphically, making the
diagram chase an easier task. Similarly, because each arrow is a \emph{function
}- a product of the sheaf - and not just a relation, this local section
diagram chasing using the sheaf is equivalent to conducting the analysis
on the equations themselves. Further, this analysis is powerful since,
by successfully extending the local section across the entire system,
we can say the following: country one, having a good understanding
of their Goodwin cycle, and where they are within it can infer quite
a bit about what is happening in the rest of the system. Further,
with a little bit of information about the price-trading structure,
country one can fully determine country two's Goodwin cycle. Through
``locking down'' $u_{2}$ and $v_{2}$, country one knows where
country two is within it's Goodwin cycle \emph{and }because\emph{
}$\dot{u}_{2}$ and $\dot{v}_{2}$ are completely determined, they
know how country two is moving through their cycle. Thus, this local
section can be extended to a global section in our sheaf and system.

\subsection{Applications and Policy Implications}

The discussion and analysis conducted above were quite theoretical.
However, sheaves and their applications aren't restricted to this
theoretical level. This analysis extends quite nicely to a more practical
discussion. We saw at the end of the last subsection the power of
successfully extending local sections across a sheaf. Since we can
theoretically \emph{always }extend the local section, we should be
able to in practice. In theory, theory and practice are the same.
In practice, this is not always the case. Consider two countries each
with accurate data about the different variables in the extended Goodwin
model. Based on our analysis above, we should be able to fully extend
a local section $(u_{1},v_{1})$ to the entire system, making it a
global section, thereby accurately determining values of variables
in country two. If we attempt to extend this local section and practice,
there are two outcomes: there is disagreement based on the data or
there isn't. If there is disagreement on the data, this is valuable
insight; it tells us that the model we have constructed is not as
good as we thought! It forces us to reconsider the variable relationship
structures we've built out using our differential equations. If the
data agree, great! Our model appears to be a good representation of
the phenomena we are trying to model. Still more can be said.

For the remainder of this subsection, suppose that our model \emph{is
}good. The following represents the power of extending local sections
in practice. Consider two countries are engaged in horizontal trade.
Each has their own separate Goodwin model ``running'' within their
economy. If country one has a strong understanding of its own Goodwin
cycle, and understands a little bit of the trading structure, country
one can completely determine the Goodwin cycle running in country
two. This is what extending a local section of $(u_{1},v_{1})$ to
the entire system effectively amounts to. Country one, could for example,
control its own Goodwin cycle such that through trade, its dynamic
interaction with country two could lead to positive or negative results.
For example, it could manipulate its own Goodwin cycle in such a way
that country two experiences negative effects, hindering growth; alternatively,
it could help country two as well. 

This application of the power of local sections is unlikely at best
from an economic perspective. If a country is looking to help or hurt
a fellow country, other programs or policies exist which have that
specific goal. More likely than not, the power of local sections comes
from a data collection and synthesis standpoint. For example, imagine
two countries who are again engaged in horizontal trade. For the sake
of this example, let us say that country one is a developed nation
and its agencies that collect and report macroeconomic related statistics
are trustworthy and reliable, while country two, a developing nation,
is not trustworthy with its statistics reporting. Perhaps, country
one has cause to be doubtful of their reports, or country two simply
does not report this data. Given a strong understanding of its internal
Goodwin Model and some reliable data about prices of goods exchanged,
country one can effectively ``peer inside'' country two. Country
one can accurately gain an understanding of where country two is in
their Goodwin cycle as well as where they're going \emph{and }where
they've been. Understanding the Goodwin cycle of country one's trading
parter can help them make more informed dynamic policy decisions.

\section{Conclusion and Further Research}

In this paper we have explored the extended Goodwin model from a sheaf
theoretic approach. This paper has also introduced the concepts of
conservative dynamical systems, the original Goodwin model, and the
foundations of sheaf theory applied to systems of equations and differential
equations. Finally, we considered the extended Goodwin model, developed
by Ishiyama, which introduces horizontal international trade between
two countries. Sheaf theory provides a new avenue to conduct analysis
on differential equations by going beyond typical equilibrium and
dynamic systems analysis. Through considering local sections of sheaves,
and their extensions, theoretical inferences can be made from different
``vantage points'' within the model. Further, it allows for a suitable
``check'' on the theoretical model. Since local sections of $(u_{1},v_{1})$,
in the case of the extended Goodwin model can always be extended,
theoretically, they ought to be able to extend in practice; if not,
the model is not as good as we originally thought.

Given the novelty of this type of research approach, there are multiple
avenues for further research to build upon what has been started here.
The Goodwin model beautifully and simplistically models cycles in
an economy, making it a good candidate for updating and expanding. 

One avenue for future research concerns expanding Ishiyama's theoretical
framework to more than two countries. From looking at the sheaf
diagram, an extension to three countries engaged in horizontal pairwise
trade simply involves a ``cut and paste'' approach to build out
the sheaf. Extending to $n$ countries generalizes in a similar way,
generating a network of differential equations, described diagrammatically
by a sheaf. Given this new network, do the dynamics still appear to
be chaotic? Further, does the shape of the network result in different
dynamics? For example, if country one trades with country two and
three, the equilibrium prices and economic state in country one appears
to be independent of whether countries two and three trade with each
other; however the dynamics may change. The power of local sections
can be similarly applied to this extended network, and similar questions
arise. Given that two of country two's variables were completely determined
through country one's choice of $(u_{1},v_{1})$, how does this generalize
to a larger network?

A second approach to further research involves moving away from the
theoretical framework and into the application of data. With regard
to domestic countries, there is an ample amount of data available
with which the hypotheses presented in the two country framework could
be tested. 

Finally, the sheaf theoretic approach need not be confided to updated
Goodwin models. This approach is sufficiently general and malleable
that it can be applied to other theories of economic growth and trade,
supply chain evolution, or theories of financial markets. These problems
remain to be attacked in future areas of research.  \pagebreak{}

\bibliographystyle{plain}
\addcontentsline{toc}{section}{\refname}\bibliography{pcthesis}

\end{document}